\newcommand\Defn[1]{\textbf{\color{black}#1}}
\newcommand\Def[1]{\Defn{#1}}
\renewcommand\u{\mathbf{u}}
\renewcommand\v{\mathbf{v}}
\newcommand\w{\mathbf{w}}
\newcommand\q{\mathbf{q}}
\newcommand\polar{\vee}
\newcommand\eps{\varepsilon}
\renewcommand\emptyset{\varnothing}
\newcommand\Z{\mathbb{Z}}
\newcommand\R{\mathbb{R}}
\newcommand\C{\mathbb{C}}
\newcommand\z{\mathbf{z}}
\newcommand\x{\mathbf{x}}
\newcommand\inner[1]{\langle {#1} \rangle}
\newcommand\defeq{\coloneqq}
\newcommand\CAb{\beta}%
\newcommand\bExt{\widecheck{\CAb}}%
\newcommand\Sphere{\mathbb{S}}
\newcommand\op{\mathrm{op}}
\DeclareMathOperator{\sgn}{sgn}
\DeclareMathOperator{\relint}{relint}
\DeclareMathOperator{\interior}{int}
\DeclareMathOperator{\aff}{aff}
\newcommand\affL{\aff_0}
\DeclareMathOperator{\cpl}{cpl}
\DeclareMathOperator{\lineal}{lineal}
\DeclareMathOperator{\cone}{cone}
\DeclareMathOperator{\vol}{vol}
\DeclareMathOperator{\rk}{rk}
\DeclareMathOperator{\abR}{R}
\DeclareMathOperator{\abE}{E}
\DeclareMathOperator{\abP}{Pr}
\DeclareMathOperator{\abM}{M}
\DeclareFontFamily{U}{mathx}{\hyphenchar\font45}
\DeclareFontShape{U}{mathx}{m}{n}{ <5> <6> <7> <8> <9> <10> <10.95> <12> <14.4>
  <17.28> <20.74> <24.88> mathx10 }{}
\DeclareSymbolFont{mathx}{U}{mathx}{m}{n}
\DeclareMathAccent{\widecheck}{0}{mathx}{"71}
\newtheorem{thm}{Theorem}[section] \newtheorem{cor}[thm]{Corollary}
\newtheorem{lem}[thm]{Lemma} \newtheorem{prop}[thm]{Proposition}
\theoremstyle{definition} \newtheorem{definition}[thm]{Definition}
 \newtheorem{rem}[thm]{Remark}
\title{Generalized angle vectors, geometric lattices, and flag-angles}
\author{Spencer Backman}
\address{Department of Mathematics and Statistics,
University of Vermont, Burlington,
Vermont, USA.}
\email{Spencer.Backman@uvm.edu}
\author{Sebastian Manecke} 
\author{Raman Sanyal}
\address{Institut f\"ur Mathematik, Goethe-Universit\"at Frankfurt, Germany} 
\email{manecke@math.uni-frankfurt.de}
\email{sanyal@math.uni-frankfurt.de}
\keywords{interior/exterior angles, cone valuations, Gram's relation,
graded posets, angle deficiencies, flag-angles, flag-vectors, incidence
algebras}
\subjclass[2010]{
52B11, 
52B45, 
52C35, 
06A11, 
52B12} 
\date{\today}
\begin{document}

%
%
%

\begin{abstract}
    Interior and exterior angle vectors of polytopes capture curvature
    information at faces of all dimensions and can be seen as metric
    variants of $f$-vectors. In this context, Gram's relation takes the place of the
    Euler--Poincar\'e relation as the unique linear relation among interior
    angles. We show the existence and uniqueness of Euler--Poincar\'e-type
    relations for generalized angle vectors
 by building a bridge to the
    algebraic combinatorics of geometric lattices, generalizing work of
    Klivans--Swartz. 

    We introduce flag-angles of polytopes as a geometric counterpart to
    flag-$f$-vectors. Flag-angles generalize the angle deficiencies of
    Descartes--Shephard, Grassmann angles, and spherical intrinsic volumes.
    Using the machinery of incidence algebras, we relate flag-angles of
    zonotopes to flag-$f$-vectors of graded posets. This allows us to determine
    the linear relations satisfied by interior/exterior flag-angle
    vectors.

\end{abstract}
\maketitle


\newcommand\Tcone[2]{\mathrm{T}_{#1}{#2}}%
\newcommand\SCA{\nu}%
\newcommand\intSCA{\widehat{\SCA}}%
\newcommand\Ncone[2]{\mathrm{N}_{#1}#2}%
\newcommand\extSCA{\widecheck{\SCA}}%
\newcommand\CA{\alpha}%
\newcommand\aInt{\widehat{\CA}}%
\newcommand\aExt{\widecheck{\CA}}%
\newcommand\Cones{\mathcal{C}}%
\newcommand\LL{\mathsf{L}}%
\newcommand\Lflats{\mathcal{L}}%
\newcommand\Ltop{\mathbf{1}}%
\newcommand\Lbot{\mathbf{0}}%
\newcommand{\Incidence}{\mathcal{I}}%
\newcommand\Arr{\mathcal{H}}%
\newcommand\Lfaces{\mathcal{F}}%
\newcommand\poset{\mathcal{P}}%
\newcommand\FP{\mathcal{F}}%
\newcommand\FW{\mathbf{W}}%
\newcommand\faInt{\widehat{\boldsymbol{\CA}}}%
\newcommand\faExt{\widecheck{\boldsymbol{\CA}}}%

\section{Introduction}\label{sec:intro}
For a convex polytope $P \subset \R^d$ of dimension $d$, let $f_i(P)$ be the
number of $i$-dimensional faces of $P$ for $i = 0,1,\dots,d-1$. The Euler--Poincar\'e
relation states that  the face numbers satisfy
\begin{equation}\label{eqn:euler}
    f_0(P) - f_1(P) + f_2(P) - \cdots + (-1)^{d-1} f_{d-1}(P) \ = \ 1 -
    (-1)^d \, .
\end{equation}
This simple linear relation among the face numbers is the key to a rich
interplay of geometry, combinatorics, and algebra as amply demonstrated, for
example, in~\cite{RotaKlain, Stanley-eulerian}. In particular, the
Euler--Poincar\'e relation~\eqref{eqn:euler} is, up to scaling, the only
linear relation among the face numbers of polytopes of fixed dimension $d$;
cf.~\cite{hohn}, \cite[Sect.~8.1]{gruenbaum}.  In this paper, we will show 
the existence and uniqueness of Euler--Poincar\'e-type relations for certain
class of \emph{semi-discrete} invariants and their interplay with algebraic
combinatorics. These invariants are generalizations of the well-known
\emph{interior} and \emph{exterior} angle vectors of polytopes.

The local geometry of $P$ at a face $F$ is captured by the \Def{tangent cone}
(or \Def{inner cone}) $\Tcone{F}{P} \defeq \cone(-F + P)$, which is the cone of
feasible directions from $F$. The \Def{interior angle} of $P$ at $F$ is the
spherical volume
\[
    \intSCA(F,P) \ \defeq \ \SCA(\Tcone{F}{P}) \ = \
    \frac{\vol(\Tcone{F}{P} \cap B_d)}{\vol(B_d)} \, ,
\]
which generalizes the notion of dihedral angle to faces of all dimensions.
The $i$-th interior angle of $P$ is given by $\intSCA_i(P) \defeq \sum_F
\intSCA(F,P)$, where $F$ ranges over all faces of dimension $i$. The
\Def{interior angle vector} $\intSCA(P) = (\intSCA_i(P))_{i=0,\dots,d-1}$
captures curvature information at faces of various dimensions and combines
combinatorial as well as metric properties of $P$.  The fundamental relation,
called \Def{Gram's relation}, which is satisfied by interior angle vectors of
$d$-dimensional polytopes is 
\begin{equation}\label{eqn:SCAgram}
        \intSCA_0(P) - \intSCA_1(P) + \intSCA_2(P) - \cdots + (-1)^{d-1}
        \intSCA_{d-1}(P) \ = \ (-1)^{d+1} \, .
\end{equation}
We refer to Gr\"unbaum~\cite[Sect.~14.4]{gruenbaum} for a detailed 
historic account of Gram's relation and its importance.

Gromov and Milman~\cite{GM87} introduced and studied an \emph{anisotropic}
notion of angle of a cone $C \subset \R^d$
\[
    \SCA_K(C) \ \defeq \ \frac{\vol(C \cap K)}{\vol(K)} \, ,
\]
where $K \subset \R^d$ is a fixed centrally-symmetric convex body. These
\emph{cone (probability) measures} are related to surface measures of general
(star) convex bodies~\cite{NR03, Naor07,BGMN05}. Using the integral-geometric
perspective developed by Perles--Shephard~\cite{PS67} (see also~\cite{Welzl}),
one easily shows that Gram's relation is the essentially unique linear
relation for the associated interior angles $\intSCA_{K}(P)_i = \sum_F
\SCA_K(\Tcone{F}{P})$ of $d$-dimensional polytopes.
In the first part of the paper, we prove the existence and uniqueness of
Gram's relation for a more general class of cone angles by way of an
astounding connection to the combinatorics of posets and matroids.

A map $\CA : \Cones_d \to R$ from the collection of convex polyhedral cones
$\Cones_d$ in $\R^d$ into some unital ring $R$ is a \Def{valuation} if
$\CA(\{0\}) = 0$ and
\[
    \CA(C \cup C') \ = \ \CA(C) + \CA(C') - \CA(C \cap C') \,
\]
for all $C,C' \in \Cones_d$ such that $C \cup C', C \cap C' \in \Cones_d$.  A
valuation $\CA$ is \Def{simple} if $\CA(C) = 0$ whenever $\dim C < d$ and we
call $\CA$ a \Def{cone angle} if in addition $\CA(\R^d) = 1$. Cone valuations
play a decisive role in integral geometry~\cite{SchneiderWeil} and cone angles
strictly subsume cone probability measures; see Section~\ref{sec:CA}. Note
that we do not require cone angles to be rotationally invariant or to satisfy
any positivity conditions.  The \Def{interior $\CA$-angle} of a polytope $P$
at a face $F \subseteq P$ is then defined as $\aInt(F,P) \defeq
\CA(\Tcone{F}{P})$. We also define the \Def{exterior $\CA$-angle} of $P$ at
$F$ as $\aExt(F,P) \defeq \CA(\Ncone{F}{P} + \affL(F))$, where $\affL(F)$ is the
linear subspace parallel to $F$ and $\Ncone{F}{P}$ is the \Def{normal cone} of
$P$ at $F$, that is, the cone polar to $\Tcone{F}{P}$.  As expected, the
interior and exterior \Def{$\CA$-angle vector} of $P$ are defined through
\[
    \aInt_i(P) \ \defeq \ \sum_{F} \aInt(F,P) \qquad \text{ and } \qquad
    \aExt_i(P) \ \defeq \ \sum_{F} \aExt(F,P) \, ,
\]
where the sums are over all faces $F \subseteq P$ of dimension $i$, for $i =
0, \dots, d-1$. Our first main result is this.

\begin{thm}\label{thm:gram}
    Let $\CA$ be a cone angle. Then, up to scaling, the only linear
    relations satisfied by $\aInt(P)$, respectively $\aExt(P)$, for
    any $d$-dimensional polytope $P$ are
    \begin{equation} \label{eqn:gram} 
        \aInt_0(P) - \aInt_1(P) + \aInt_2(P) - \cdots + (-1)^{d-1}
        \aInt_{d-1}(P) \ = \ (-1)^{d+1} \, ,
    \end{equation}%
    \begin{equation}
      \label{eqn:ext_gram}
        \aExt_0(P) \ = \ \sum_{v} \aExt(v,P) \ = \ 1 \,.
    \end{equation}    
\end{thm}
Showing the validity of both relations is not difficult. Indeed,
in~\cite{PS67} a proof of~\eqref{eqn:SCAgram} is sketched that works for
general cone angles. For completeness, we give a proof using a conical version
of the Brianchon--Gram relation of~\cite{AS15}; see
also~\cite{lawrence,shephard-elem}. The main challenge in proving
Theorem~\ref{thm:gram} is uniqueness, as none of the analytic and geometric
properties of $\SCA_K$ carry over to general cone angles.  We prove
Theorem~\ref{thm:gram} by establishing a powerful connection between the
geometry and the combinatorics of zonotopes.

To explain the combinatorial connection, define $\LL(F) \defeq \affL(F)^\perp$ for any
non-empty face $F \subseteq P$ and let $\Lflats(P) \defeq \{\LL(F) : \emptyset \neq F
\subseteq P \}$ partially ordered by \emph{reverse} inclusion. This is a finite graded
poset of rank $d$. In particular, if $P$ is a zonotope, that is, a Minkowski-sum of
segments, then $\Lflats(P)$ is a geometric lattice, called \Defn{lattice of flats}. The
\Def{Whitney numbers} of the first kind $w_i$ and of the second kind $W_i$ of a graded
poset are important enumerative invariants~\cite[Sect.~3.10]{EC1}, whose precise
definition we recall in Section~\ref{sec:zono}. The following result allows us to show
the uniqueness in Theorem~\ref{thm:gram} on a purely combinatorial level.

\begin{thm}\label{thm:whitney}
    Let $Z \subset \R^d$ be a $d$-dimensional zonotope with lattice of flats
    $\Lflats = \Lflats(Z)$. For any cone angle $\CA$ we have
    \[
        \aExt_i(Z) \ = \ W_{i}(\Lflats(Z)) \quad \text{ and } \quad
        \aInt_i(Z) \ = \ (-1)^{d-i} w_{d-i}(\Lflats(Z)^\op)
    \]
    for all $i=0,\dots,d-1$.
\end{thm}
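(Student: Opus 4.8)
The plan is to reduce everything to the combinatorics of the normal fan of $Z$. Up to translation write $Z = [0,v_1] + \cdots + [0,v_n]$; since $Z$ is $d$-dimensional the $v_j$ span $\R^d$, the normal fan of $Z$ is the linear hyperplane arrangement $\{v_j^\perp\}_{j}$, and the faces of $Z$ are indexed by the covectors of the oriented matroid of $v_1,\dots,v_n$. If $F$ has covector $(J^+,J^-,J^0)$ then $\LL(F) = \lin\{v_j \colon j \in J^0\}$, which is a flat of $\Lflats(Z)$ of rank $\dim F$; conversely every flat occurs as $\LL(F)$ for some face $F$ whose dimension equals its rank. Thus the $i$-dimensional faces of $Z$ are partitioned by the rank-$i$ flats $W$ they determine, and
\[
    \aExt_i(Z) \ = \ \sum_{\rk W = i}\ \sum_{F \colon \LL(F)=W}\aExt(F,Z), \qquad \aInt_i(Z) \ = \ \sum_{\rk W = i}\ \sum_{F \colon \LL(F)=W}\aInt(F,Z),
\]
so it suffices to evaluate the inner sums for a fixed flat $W$. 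For this I will use the \emph{contracted zonotope} $Z_W \subset W^\perp$, the image of $\sum_{v_j \notin W}[0,v_j]$ under orthogonal projection onto $W^\perp$; it is full-dimensional in $W^\perp$. I also record that $\CA$ restricts to a cone angle $\CA_U$ on any linear subspace $U \subseteq \R^d$ via $\CA_U(C) \defeq \CA(C + U^\perp)$.

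For the exterior angles, fix a flat $W$ of rank $i$. The normal cones $\Ncone{F}{Z}$ of the faces $F$ with $\LL(F)=W$ are exactly the maximal cones of the normal fan of $Z_W$, regarded inside $W^\perp$: each is full-dimensional in $W^\perp$, distinct normal cones meet only in lower-dimensional cones, and every $c \in W^\perp$ is maximized over $Z$ on a face with $\LL(F) \supseteq W$, hence generically on one with $\LL(F)=W$. By the defining inclusion–exclusion property of the valuation $\CA_{W^\perp}$ together with its simplicity (so that the lower-dimensional overlap terms vanish), summing $\aExt(F,Z) = \CA_{W^\perp}(\Ncone{F}{Z})$ over these faces gives $\CA_{W^\perp}(W^\perp)=1$. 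Summing over all $W_i(\Lflats(Z))$ flats of rank $i$ yields $\aExt_i(Z) = W_i(\Lflats(Z))$.

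For the interior angles the essential step is a reduction to $Z_W$: the faces $F$ of $Z$ with $\LL(F) \supseteq W$ are in bijection with the faces $G$ of $Z_W$, the bijection satisfying $\dim F = \dim G + \rk W$, and along it $\Tcone{F}{Z} = \Tcone{G}{Z_W} + W$, so $\CA(\Tcone{F}{Z}) = \CA_{W^\perp}(\Tcone{G}{Z_W})$. Writing $g(W) \defeq \sum_{F \colon \LL(F)=W}\CA(\Tcone{F}{Z})$ and using $\dim F = \rk \LL(F)$, this gives
\[
    \sum_{W' \supseteq W}(-1)^{\rk W'}g(W') \ = \ \sum_{F \colon \LL(F)\supseteq W}(-1)^{\dim F}\CA(\Tcone{F}{Z}) \ = \ (-1)^{\rk W}\sum_{G}(-1)^{\dim G}\CA_{W^\perp}(\Tcone{G}{Z_W}),
\]
the last sum over all faces $G$ of $Z_W$, including $G = Z_W$. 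If $W \neq \Ltop$ then $Z_W$ is a positive-dimensional polytope in $W^\perp$, and since $\CA_{W^\perp}(\Tcone{Z_W}{Z_W}) = \CA_{W^\perp}(W^\perp) = 1$, Gram's relation (Theorem~\ref{thm:gram}) for $Z_W$ with the cone angle $\CA_{W^\perp}$ shows this alternating sum is $0$; if $W = \Ltop$ then $Z_W$ is a point and the sum equals $\CA(\R^d) = 1$. Hence $\sum_{W' \supseteq W}(-1)^{\rk W'}g(W')$ equals $(-1)^d$ if $W = \Ltop$ and $0$ otherwise.

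Möbius inversion in $\Lflats(Z)$ from the top now gives $(-1)^{\rk W}g(W) = (-1)^d\,\mu_{\Lflats(Z)}(W,\Ltop)$, that is $g(W) = (-1)^{d-\rk W}\mu_{\Lflats(Z)}(W,\Ltop)$, and therefore
\[
    \aInt_i(Z) \ = \ \sum_{\rk W = i}g(W) \ = \ (-1)^{d-i}\sum_{\rk W = i}\mu_{\Lflats(Z)}(W,\Ltop) \ = \ (-1)^{d-i}\,w_{d-i}(\Lflats(Z)^{\op}),
\]
the last equality because the elements of rank $d-i$ in $\Lflats(Z)^{\op}$ are precisely the flats $W$ with $\rk W = i$, its bottom element is $\R^d = \Ltop$, and $\mu_{\Lflats(Z)^{\op}}(\R^d, W) = \mu_{\Lflats(Z)}(W,\Ltop)$. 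The two places I expect to need the most care are the combinatorial reduction — identifying, for each flat $W$, the faces of $Z$ lying above $W$ with the faces of the contracted zonotope $Z_W$, and verifying the cone identities $\Ncone{F}{Z} = \Ncone{G}{Z_W}$ and $\Tcone{F}{Z} = \Tcone{G}{Z_W} + W$ — and the (standard) fact that a simple cone valuation is additive over a subdivision of a cone into finitely many cones; granted these, the exterior case is immediate and the interior case is just Gram's relation followed by Möbius inversion.
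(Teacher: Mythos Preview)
Your argument is correct. The exterior-angle half is essentially the paper's proof of Proposition~\ref{prop:zono_ext}: both amount to observing that for a fixed flat $W$ the normal cones of the faces $F$ with $\LL(F)=W$ tile $W^\perp$, so the simple valuation sums to $1$. For the interior-angle half, however, you take a genuinely different route. The paper proves the key identity $\sum_{\LL(F)=W}\aInt(F,Z)=(-1)^{d-\rk W}\mu_{\Lflats}(W,\Ltop)$ (Lemma~\ref{lem:key}) by reducing via projection to the vertex case and then invoking the Greene--Zaslavsky theorem: for generic $w$, the number of regions of the arrangement not met by $w^\perp$ equals $(-1)^d\mu(\Lbot,\Ltop)$, and by polarity these are exactly the vertices whose tangent cone contains $w$. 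You instead compute the upper zeta-transform $\sum_{W'\supseteq W}(-1)^{\rk W'}g(W')$, recognize it via the contraction $Z_W$ as the full Gram sum for $Z_W$ with respect to $\CA_{W^\perp}$ (which vanishes unless $W=\Ltop$ by Proposition~\ref{prop:Int0}), and then M\"obius-invert. Your approach is more self-contained---it uses only Gram's relation, already established in the paper, and avoids the external appeal to Greene--Zaslavsky---while the paper's approach gives a direct geometric meaning to the M\"obius values. One small remark: since only the relation~\eqref{eqn:gram} (Proposition~\ref{prop:Int0}) is used, and not the uniqueness statement in Theorem~\ref{thm:gram} (which in turn relies on Theorem~\ref{thm:whitney}), there is no circularity.
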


For the standard cone angle, the second equation in Theorem~\ref{thm:whitney}
was proven in Klivans--Swartz~\cite{KlivansSwartz} using ideas similar to
those in~\cite{PS67} involving projections of zonotopes.
Theorem~\ref{thm:whitney} is then used to show that interior/exterior angle
vectors of zonotopes certify the uniqueness of \eqref{eqn:gram} and
\eqref{eqn:ext_gram}. 
Theorem~\ref{thm:whitney} and Theorem~\ref{thm:gram} are proved in
Section~\ref{sec:zono}. 

In Section~\ref{sec:incalg}, we recast the correspondence between
interior/exterior angles and Whitney numbers of the first and second kind
in algebraic terms.  McMullen~\cite{McM-angle} showed that
$\aInt,\aExt$ can be interpreted as elements in the incidence algebra
$\Incidence(\Lfaces(P))$ of the face poset $\Lfaces(P)$ of $P$. In
Section~\ref{sec:incalg}, we show that for a zonotope $Z$ with lattice
of flats $\Lflats$, there is a subalgebra
$\Incidence_\LL \subseteq \Incidence(\Lfaces(Z))$ such that the map
$F \mapsto \LL(F)$ yields a ring map
$ \LL_* : \Incidence_\LL \to \Incidence(\Lflats)$. We show that $\aExt \in \Incidence_\LL$ and
$\LL_* \aExt = \zeta_{\Incidence(\Lflats)}$.  McMullen's
\emph{inverse} angles~\cite{McM-polytopealgebra} then allow us to show
$\LL_* \aInt' = \mu_{\Incidence(\Lflats)}$, where $\aInt'$ is a slight
modification of $\aInt$. This yields an elegant algebraic proof of
Theorem~\ref{thm:whitney} and explains the appearance of Whitney
numbers of the first and second kind. In addition, we give a simple
proof of a beautiful relation due to Klivans and
Swartz~\cite{KlivansSwartz} between spherical intrinsic volumes of a
zonotope $Z$ and the characteristic polynomial of $\Lflats(Z)$; see
Corollary~\ref{cor:alg-KS}.

The second goal of the paper is to introduce \emph{flag-angle vectors} as a
unifying geometric concept and to exhibit and exploit parallels to the theory
of flag-vectors of posets. To motivate flag-angle vectors, let $P$ be a 
$3$-dimensional polytope. Descartes defined the \emph{angle defect}
at a vertex $v$ as $\delta(v,P) = 1 - \sum_F \intSCA(v,F)$, where the sum is
over all $2$-dimensional faces $F$ containing $v$ and he showed that
\[
    \sum_v \delta(v,P) \ = \ 2 \, .
\]
For a $d$-polytope $P$ and $i=0,\dots,d-3$, Shephard~\cite{Shephard-angle}
defines the \emph{$i$-th total angle deficiency}
\[
    \delta_i(P) \defeq \ f_i(P) - \sum_{G \subset F} \intSCA(G,F) \, ,
\]
where the sum is over all faces $G \subset F$, with $\dim G = i$ and $\dim F =
d-1$. Generalizing Descartes' result, Shephard showed that
\begin{equation}\label{eqn:angle_defect}
    \delta_0(P) - \delta_1(P) + \dots + (-1)^{d-3} \delta_{d-3}(P) \ = \ 1 +
    (-1)^{d-1}\,.
\end{equation}
Such generalized Descartes-relations were further studied for
manifolds~\cite{GrunbaumShephard} and in relation with stratified
curvature~\cite{Bloch} and polyhedral Gauss--Bonnet
theorems~\cite{Schneider-GaussBonnet}.

In contrast to interior angle vectors, total angle deficiencies
record the interaction of faces of various dimensions and the generalized
Descartes--relation~\eqref{eqn:angle_defect} shows that angle deficiencies are
not independent. In a different direction, McMullen~\cite{McM-angle} showed
that exterior angles can be computed from interior angles of flags of faces
\begin{equation}\label{eqn:int_to_ext}
    (-1)^d \widecheck{\SCA}_i(P) = \sum_{F_1 \subset F_2 \subset \cdots
    \subset F_k} (-1)^{k+1} 
    \intSCA(F_1,F_2) 
    \intSCA(F_2,F_3) \cdots
    \intSCA(F_{k},P)  \, ,
\end{equation}
where the sum is over all flags of faces with $\dim F_1 = i$. Moreover,
McMullen showed that various other measures of curvature, such as spherical
intrinsic volumes and Grünbaum's \emph{Grassmann
angles}~\cite{Grunbaum-grassmann} can be computed from chains of
interior (or exterior) angles.

\begin{definition}
    Let $\CA$ be a cone angle. For a $d$-dimensional polytope $P$ and a
    non-empty set $S = \{ 0 \le s_1 < s_2 < \cdots < s_k \le d-1 \}$, define the 
    \Def{interior flag-angle} by
    \begin{equation}\label{eqn:flag}
        \aInt_S(P) \ \defeq \ \sum_{F_1 \subset F_2 \subset \cdots \subset F_k}
        \aInt(F_1,F_2) \,  \aInt(F_2,F_3) \cdots \aInt(F_k,P) \, ,
    \end{equation}
    where the sum is over all chains of faces of $P$ such that $\dim F_i =
    s_i$ for $i=1,\dots,k$. The \Def{exterior flag-angle} $\aExt_S(P)$ is
    defined analogously and we set $\aExt_\emptyset(P) \defeq
    \aInt_\emptyset(P) \defeq 1$. 
\end{definition}

The vectors $\faInt(P) = (\aInt_S(P))_S$ and $\faExt(P) = (\aExt_S(P))_S$ are
called the interior and exterior \Def{flag-angle vectors} of $P$.
%
%
In Sections~\ref{sec:flag} and~\ref{sec:flag-whitney}, we determine the affine
spaces spanned by interior and exterior flag-angle vectors, respectively. 

\begin{thm}\label{thm:flag_rels}
    Let $P$ be a $d$-dimensional polytope and $S \subseteq [d-1] \defeq \{1, 2, \dots, d-1\}$.  
    For any cone angle $\CA$, we have 
    \[
        \aExt_{S}(P) \ = \ \aExt_{S \cup \{0\}}(P) 
        \qquad \text{ and } \qquad
        \sum_{i=0}^{t-1} (-1)^{i} \aInt_{S \cup \{i\}}(P) \ = \ (-1)^{t+1}
        \aInt_S(P) \, ,
    \]
    where $t = \min(S \cup \{d\})$.

    Moreover, the affine hull of exterior flag-angles as well as the
    affine hull of interior flag-angles is of dimension $2^{d-1} -
    1$. These spaces are spanned by the flag-angle vectors of
    zonotopes.
\end{thm}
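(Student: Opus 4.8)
The plan is to reduce the flag-angle identities to the ordinary-angle identities (Gram's relation, Theorem \ref{thm:gram}, and its exterior counterpart) applied inside a single face, and then to carry out a dimension count in the spirit of Bayer--Billera.

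For the two families of linear relations I would argue as follows. Fix a chain $F_1 \subset \cdots \subset F_k$ contributing to $\faExt_S(P)$ or $\faInt_S(P)$ and isolate the role played by the smallest index. For the exterior identity, recall that $\aExt(G,F)$ is the $\CA$-angle of the cone of linear functionals on $\LL(F)$ maximized on $G$; the $0$-dimensional faces $G$ of $F_1$ (if $0 \notin S$, we are inserting a new minimal rank) together tile the dual cone, and since $\CA$ is a cone angle (hence additive and simple) the exterior angles of the vertices of $F_1$ sum to the exterior angle $\aExt(\emptyset,F_1)=1$; more precisely, summing $\aExt(G,F_1)$ over vertices $G\subset F_1$ is exactly the exterior analogue of the fact that the normal fan of $F_1$ is complete. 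Multiplying through by the fixed product $\aExt(F_1,F_2)\cdots\aExt(F_k,P)$ and summing over chains gives $\faExt_{S\cup\{0\}}(P)=\faExt_S(P)$. For the interior identity, fix instead the chain $F_{j+1}\subset\cdots\subset F_k$ where $F_{j+1}$ is the smallest face whose dimension lies in $S$ (so $\dim F_{j+1}=t$ when $0\notin S$; in general one peels off the part of $S$ below $t$), and apply Gram's relation \emph{inside} the polytope $F_{j+1}$: the tangent cones $\Tcone{G}{F_{j+1}}$ for faces $G\subseteq F_{j+1}$ of dimension $i$ satisfy, via Theorem \ref{thm:gram} applied to $F_{j+1}$,
\[
    \sum_{i=0}^{t-1}(-1)^i \sum_{\dim G = i} \aInt(G,F_{j+1}) \ = \ (-1)^{t+1}\,,
\]
and one must only check that $\aInt(G,F_{j+1})=\sum_{G'}\aInt(G,G')\aInt(G',F_{j+1})$ for the relevant nested chains below $F_{j+1}$ — this is the multiplicativity of interior angles along a chain, which is essentially the transitivity of tangent cones ($\Tcone{G}{F_{j+1}}$ locally decomposes along $\Tcone{G'}{F_{j+1}}$). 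Multiplying by the product $\aInt(F_{j+1},F_{j+2})\cdots\aInt(F_k,P)$ over the remaining chain and summing yields the stated alternating relation with the telescoped right-hand side $(-1)^{t+1}\faInt_S(P)$.

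For the dimension statement I would mirror the Bayer--Billera argument. First, the relations in the first family show that every entry $\faExt_S$ with $0\in S$ equals $\faExt_{S\setminus\{0\}}$, so the exterior flag-angle vector is determined by its coordinates indexed by $S\subseteq\{1,\dots,d-1\}$, i.e.\ it lies in an affine space of dimension at most $2^{d-1}-1$. For the interior vector one checks that the second family of relations, indexed by pairs $(S,t)$ with $t=\min(S\cup\{d\})$, is triangular: the relation for $S$ expresses $\faInt_S$ (which, after using it, we may regard as indexed by the ``reduced'' set) in terms of the $\faInt_{S\cup\{i\}}$ with $i<t$, and iterating lets one solve for all coordinates indexed by sets containing $0$ in terms of those avoiding $0$; again the ambient affine space has dimension $2^{d-1}-1$. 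Conversely, to see the affine hull is exactly this large, I would exhibit polytopes — most naturally \emph{zonotopes} — whose flag-angle vectors are affinely independent and span; by Theorem \ref{thm:whitney} the ordinary angle vectors of zonotopes are purely combinatorial (Whitney numbers of lattices of flats), and the analogous computation (carried out in Section \ref{sec:flag} via the incidence-algebra formalism of Section \ref{sec:incalg}) shows the flag-angles of a zonotope are the flag-Whitney numbers of its lattice of flats; choosing products of segments / suitable arrangements one gets enough affinely independent flag-Whitney vectors. Alternatively, a cleaner route for the lower bound: show the linear span of the relations we wrote down has codimension exactly $2^{d-1}$ by a direct rank computation on the incidence matrix of subsets of $[d-1]$, which is a purely combinatorial check.

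The main obstacle I anticipate is the \emph{completeness} half of the dimension count — proving that the listed relations are \emph{all} of them, i.e.\ that the affine hull is no smaller than $2^{d-1}-1$. Establishing the upper bound $2^{d-1}-1$ is the easy bookkeeping with the two relation families; the matching lower bound requires producing enough polytopes with affinely independent flag-angle vectors, and the natural supply is zonotopes via Theorem \ref{thm:whitney}, so the crux is an explicit linear-independence argument for flag-Whitney vectors of lattices of flats — a combinatorial statement that I expect to handle using the incidence-algebra description promised in Sections \ref{sec:incalg} and \ref{sec:flag}, much as Bayer--Billera used a carefully chosen family of polytopes to pin down the Fibonacci dimension.
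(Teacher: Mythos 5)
Your argument for the two families of relations is essentially the paper's, just phrased by unravelling sums rather than in the incidence-algebra language: the exterior identity is exactly Proposition~\ref{prop:flag_ext_rels} (factor out the inner sum over $F_0$ and apply Proposition~\ref{prop:Ext0}), and the interior identity is Proposition~\ref{prop:flag_int_rels}, which in the paper is the statement $(\mu_\Lfaces \ast \aInt)(\emptyset,G)=0$, i.e.\ Gram's relation applied to each $F_1$ of dimension $t=s_1$. Your dimension strategy — the upper bound from the relations, the lower bound via zonotopes whose flag-angles equal flag-Whitney numbers of the lattice of flats (Theorem~\ref{thm:flag_whitney}) and a spanning result for these (Theorem~\ref{thm:zono_span}) — is also what the paper does, and you correctly identify the spanning result as the crux.

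Two points need fixing. First, the interpolated claim
$\aInt(G,F_{1}) = \sum_{G'} \aInt(G,G')\,\aInt(G',F_{1})$
is false for general cone angles and is not needed: once you fix the upper chain $F_1\subset\cdots\subset F_k$, the inner alternating sum over $F_0\subset F_1$ is exactly $\sum_{i=0}^{t-1}(-1)^i \aInt_i(F_1) = (-1)^{t+1}$ by Proposition~\ref{prop:Int0} applied to $F_1$; no multiplicativity of angles along chains is required (and none holds). Second, your ``cleaner alternative'' for the lower bound — a rank computation on the incidence matrix of the written relations — only re-derives the upper bound $2^{d-1}-1$; it cannot show the affine hull is \emph{that large}, because additional unseen relations could hold. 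The lower bound genuinely requires exhibiting polytopes (zonotopes) with affinely independent flag-angle vectors. Finally, for the interior case you also need the passage from flag-Whitney numbers of the second kind to the first kind; the paper achieves this via the identity $F_g(1/\z)=F_{g^{-1}}(\z)$ (Theorem~\ref{thm:unipot}), which shows the two spans are linearly isomorphic — without some such step the spanning result for $W_S$ does not immediately transfer to $w_S$.
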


Since $\delta_i(P) = f_i(P) - 2 \intSCA_{0,d-1}(P)$,
Theorem~\ref{thm:flag_rels} for $S = \{d-1\}$ together with the
Euler--Poincar\'e relation~\eqref{eqn:euler} implies Shephard's
result~\eqref{eqn:angle_defect}. Theorem~\ref{thm:flag_rels} also determines
the spaces of linear relations on all specializations of flag-angles, including
spherical intrinsic volumes and Grassmann angles.

Flag angle vectors are \emph{semi-discrete} counterparts to \emph{flag
vectors} of polytopes and posets. Let $\poset$ be a graded poset of rank $d+1$
with minimal element $\Lbot$ and maximal element $\Ltop$. For $S \subseteq [d]$, the number of chains of elements
\[
     \Lbot \ \prec_\poset \ c_1 \ \prec_\poset \ c_2 \ \prec_\poset \ \cdots \
     \prec_\poset \ c_k \ \prec_\poset \ \Ltop
\]  
such that $S = \{ \rk c_1, \rk c_2, \dots, \rk c_k \}$ is called the
\Def{flag-Whitney number} of the \Def{second kind} $W_S(\poset)$.  The resulting vector
$\FW(\poset)=(W_S(\poset))_{S}$ is commonly known as the
\Def{flag-vector} of $\poset$. Flag-vectors of face posets of polytopes or,
more generally, of Eulerian posets have received considerable attention,
starting with the seminal paper Bayer--Billera~\cite{BB}.  Bayer and Billera
determined the linear relations on flag-vectors of Eulerian posets, called the
\emph{generalized Dehn-Sommerville relations} and showed that flag-vectors
of Eulerian posets of rank $d+1$ span an affine space of dimension $F_d$,
where $F_d$ is the $d$-th Fibonacci number.
Billera--Ehrenborg--Readdy~\cite{BER} showed that these spaces are spanned by
the flag vectors of $d$-dimensional zonotopes.

To complete the relation to flag vectors, we introduce the \Defn{flag-Whitney
numbers} of the \Def{first kind} $w_S(\poset)$, which extend the ordinary
Whitney numbers $w_i(\poset)$ to flags. They are obtained via
inclusion-exclusion from the flag-vector and are complementary to the usual
\emph{flag $h$-vector}.  The following result extends
Theorem~\ref{thm:whitney} to flag-angle vectors. We set $d - S \defeq \{ d - s :
s \in S \}$.

\begin{thm}\label{thm:flag_whitney}
    Let $\CA$ be a cone angle and $Z \subset \R^d$ a full-dimensional
    zonotope. Then for a nonempty $S \subseteq \{0,1,\dots,d-1\}$, we have
    \[
        \aExt_S(Z) \ = \ W_{S}(\Lflats(Z)) \quad \text{ and } \quad
        \aInt_S(Z) \ = \ (-1)^{d-r} w_{d - S}(\Lflats(Z)^\op) \, ,
    \]
    where $r = \min(S)$. 
\end{thm}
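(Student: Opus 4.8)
The plan is to reduce the flag statement to the non-flag statement of Theorem \ref{thm:whitney} by "peeling off" the chain of faces one step at a time, using the recursive structure of tangent/normal cones of zonotopes. First recall the key structural fact (already used implicitly for Theorem \ref{thm:whitney}): if $Z\subset\R^d$ is a zonotope and $F\subseteq Z$ is a face, then $F$ is again a zonotope, its tangent cone $\Tcone{F}{Z}$ is (a translate of) a zonotopal cone, and most importantly the faces of $Z$ containing $F$ are in rank-preserving bijection with the faces of the zonotopal cone $\Tcone{F}{Z}$; dually, the normal cone $\Ncone{F}{Z}$ is itself a zonotopal cone whose face structure matches the interval above $\LL(F)$ in $\Lflats(Z)$. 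Thus the lattice of flats $\Lflats(Z)$ is built from the local lattices of flats at each face, and a chain $F_1\subset F_2\subset\cdots\subset F_k$ in $Z$ corresponds to a chain $\Lbot = \LL(F_0)\lessdot\cdots$ of flats, or equivalently a chain in $\Lflats(Z)^{\op}$ read the other way.

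For the exterior statement, I would argue directly. By definition $\aExt_S(Z)=\sum_{F_1\subset\cdots\subset F_k}\aExt(F_1,F_2)\cdots\aExt(F_k,Z)$, and each exterior angle $\aExt(F_i,F_{i+1})$ is the $\CA$-measure of the normal cone of $F_i$ inside $\LL(F_{i+1})$. By Theorem \ref{thm:whitney} applied to each zonotope $F_{i+1}$ (or rather to the relevant zonotopal normal cone), the \emph{sum} of $\aExt$ over all faces $F_i\subset F_{i+1}$ of a fixed dimension equals the Whitney number of the second kind $W_{s_i}$ of the corresponding interval of flats. Summing over the chain and using that $W_S$ of a poset counts chains with rank-set $S$ multiplicatively along the chain (i.e. $W_S(\poset)=\sum W_{\{s_1\}}(\mathcal{P}_{\le c})\cdots$ telescoping through intervals), one gets $\aExt_S(Z)=W_S(\Lflats(Z))$. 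Concretely: induct on $|S|$; write $S=S'\cup\{s_k\}$ with $s_k=\max S$, split the chain sum as (sum over top face $F_k$ of dimension $s_k$) times (flag-exterior-angle of the zonotope $F_k$ for the set $S'$), apply Theorem \ref{thm:whitney} to pass from faces of $Z$ of dimension $s_k$ to rank-$s_k$ flats contributing $W_{s_k}$, and invoke the induction hypothesis on $F_k$.

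For the interior statement one does the same telescoping with $\aInt$, but now each $\aInt(F_i,F_{i+1})=\CA(\Tcone{F_i}{F_{i+1}})$, and Theorem \ref{thm:whitney} gives that summing $\aInt$ over faces of fixed dimension produces $(-1)^{\dim F_{i+1}-\dim F_i}w_{\ast}$ of the opposite lattice of flats on the interval. Peeling off $\min(S)=r$ first (the bottom of the chain), induction on $|S|$ reduces to a single application of Theorem \ref{thm:whitney} together with the multiplicativity of flag-Whitney numbers of the first kind $w_S$ along chains — this multiplicativity of $w_S$ via products of $\mu$ over intervals will need to be stated and checked, presumably from the definition of $w_S$ in Section \ref{sec:flag}. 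Tracking the signs: each step contributes $(-1)^{s_{i+1}-s_i}$ with $s_0$ interpreted appropriately and $s_{k+1}=d$; the product of these signs collapses to $(-1)^{d-r}$, matching the claimed exponent, and the $w$-indices collapse to $d-S$ because passing to $\Lflats^{\op}$ sends rank $s$ to corank $d-s$. The main obstacle I expect is precisely this bookkeeping: correctly identifying which interval of $\Lflats(Z)$ (or $\Lflats(Z)^{\op}$) governs each $\aExt(F_i,F_{i+1})$ or $\aInt(F_i,F_{i+1})$, and verifying the multiplicativity of flag-Whitney numbers over concatenated intervals so the telescoping is legitimate; the sign arithmetic in the interior case is routine once the interval structure is pinned down, and I'd double-check it against the non-flag case $|S|=1$, where it must recover Theorem \ref{thm:whitney}.
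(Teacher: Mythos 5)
Your plan is essentially correct and is, in substance, the paper's own argument with the packaging removed: the paper writes $\aExt_S(Z)$ and $\aInt_S(Z)$ as iterated $\ast_{k}$-products in the incidence algebra of the face lattice and then pushes the whole product forward to $\Lflats_0$ in one step, using that $\LL_\ast$ is an algebra map for each $\ast_k$ together with $\LL_\ast\aExt=\zeta_{\Lflats_0}$ and $\LL_\ast\aInt'=\mu_{\Lflats_0}$. Your ``peeling'' induction unrolls exactly this: the local identities you need at each link of the chain are $\sum_{F\subseteq G,\,\LL(F)=U}\aExt(F,G)=1$ and $\sum_{F\subseteq G,\,\LL(F)=U}\aInt(F,G)=(-1)^{\dim G-\dim U}\mu_{\Lflats}(U,\LL(G))$ for every pair $U\subseteq\LL(G)$, which are the lower-dimensional versions of \eqref{eqn:ext_L} and Lemma~\ref{lem:key} (valid because faces of belt polytopes are belt polytopes and the interval $[\Lbot,\LL(G)]$ is their lattice of flats); the ``multiplicativity of flag-Whitney numbers over intervals'' you ask for is immediate from the $\ast_k$-product definitions of $W_S$ and $w_S$ in Section~\ref{sec:flag}. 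What the paper's formalism buys is that all links are converted simultaneously, so no induction is needed.

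One concrete misstep: in the interior case you propose to peel off $r=\min(S)$, i.e.\ the \emph{bottom} of the chain. That does not close the induction as you state it: after summing over $F_1\subseteq F_2$ you are left with $\sum_{F_2\subset\cdots\subset F_k}\aInt_r(F_2)\,\aInt(F_2,F_3)\cdots\aInt(F_k,Z)$, and the weight $\aInt_r(F_2)=(-1)^{s_2-r}w_{s_2-r}(\Lflats(F_2)^\op)$ varies with $F_2$, so the remaining sum is not $\aInt_{S\setminus\{r\}}(Z)$ nor a flag-angle of any single zonotope. Peel from the \emph{top} instead: $\aInt_S(Z)=\sum_{F_k}\aInt_{S'}(F_k)\,\aInt(F_k,Z)$ with $S'=S\setminus\{\max S\}$; by induction $\aInt_{S'}(F_k)$ depends only on $\LL(F_k)$, Lemma~\ref{lem:key} converts the outer sum, and the signs $(-1)^{s_k-r}\cdot(-1)^{d-s_k}=(-1)^{d-r}$ and indices collapse as claimed. (Alternatively, strengthen the inductive statement to allow a weight depending only on $\LL(F_2)$.) With that repair, and with the lower-dimensional versions of \eqref{eqn:ext_L} and Lemma~\ref{lem:key} stated explicitly, your proof goes through.
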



We prove Theorems~\ref{thm:flag_rels} and~\ref{thm:flag_whitney} in
Section~\ref{sec:flag} using the algebraic tools developed in
Section~\ref{sec:incalg}. As before, Theorem~\ref{thm:flag_whitney} is the key
to proving the main statement of Theorem~\ref{thm:flag_rels}.
Corollary~\ref{cor:int_to_ext} extends McMullen's
relation~\eqref{eqn:int_to_ext} to flag-angles, which then shows that it
suffices to only consider exterior flag-angle vectors.  This amounts to
showing that there are no linear relations on flag-vectors of $\Lflats(Z)$ for
$d$-dimensional zonotopes $Z$, which we do in Theorem~\ref{thm:zono_span}.
This strengthens a result of Billera--Hetyei~\cite{BilleraHetyei}. Since the
flag-vector of a zonotope is encoded by the flag-vector of its lattice of
flats by results in~\cite{BayerSturmfels}, Theorem~\ref{thm:zono_span} also
strengthens the main result in Billera--Ehrenborg--Readdy~\cite{BER} in that
flag-vectors of Eulerian posets are spanned by the flag vectors of zonotopes.
In Section~\ref{sec:spherical_intrinsic_volumes} we revisit Grassmann-angle
and spherical intrinsic volumes from the perspective of Crofton-type formulas
and introduce a generalization for cone angles. This allows us to generalize
the main result of Gr\"unbaum's fundamental paper~\cite{Grunbaum-grassmann} on Grassmann angles of
polytopes.

\textbf{Acknowledgements.}
Research that led to this paper was supported by the
DFG-Collabora\-tive Research Center, TRR 109 ``Discretization in Geometry and
Dynamics'' and by the National Science Foundation under Grant No.~DMS-1440140
while the authors were at the Mathematical Sciences Research Institute in
Berkeley, California, during the Fall 2017 semester on \emph{Geometric and
Topological Combinatorics}.  S.~Backman was also supported by a Zuckerman STEM
Postdoctoral Scholarship. We thank Marge Bayer, Curtis Greene, Carly Klivans,
and Richard Ehrenborg for helpful discussions.

\section{Cone angles and linear relations}\label{sec:CA}

In this section, we recall the basic geometric constructions and prove the
existence of the relations stated in Theorem~\ref{thm:gram}.

Let $P \subset \R^d$ be a full-dimensional polytope and $q \in P$. The
\Def{tangent cone} of $P$ at $q$ is
\[
    \Tcone{q}{P} \ \defeq \{ u \in \R^d : q + \eps u \in P \text{ for some }
    \eps > 0 \} \ = \ \cone(-q + P) \, .
\]
It is easy to verify that $\Tcone{q}{P} = \Tcone{q'}{P}$ if and only if $q,q'
\in \relint(F)$ for some face $F \subseteq P$ and we recover $\Tcone{F}{P} =
\Tcone{q}{P} = \cone(-F + P)$. Tangent Cones capture the local geometry of $P$
at $F$ in that faces of $\Tcone{F}{P}$ are in correspondence to faces $G
\supseteq F$ under the correspondence $G \mapsto \Tcone{F}{G}$. If $P$ is not
full-dimensional, we extend the definition to
\[
    \Tcone{F}{P} \ = \ \cone(-F + P) + \affL(P)^\perp \, .
\]

\newcommand\Ocone[2]{\mathrm{O}_{#1}#2}%
The \Def{normal cone} of a face $F \subseteq P$ is the polyhedral cone
\[
    \Ncone{F}{P} \ \defeq \ \{ c \in \R^d \ : \ \inner{c,x} \ge \inner{c,y}
    \text{ for all } x \in F, y \in P \} \, .
\]
By construction, $\Ncone{F}{P}$ is contained in $\affL(F)^\perp$. We define the
\Def{outer cone} of $P$ at $F$ as the $d$-dimensional cone
\[
    \Ocone{F}{P} \ \defeq \ \Ncone{F}{P} + \affL(F) \, .
\]

As stated in the introduction, a cone angle $\CA : \Cones_d \to R$ is a simple
valuation normalized so that $\CA(\R^d)=1$. Cone probability measures
$\SCA_K(C) = \frac{\vol(C \cap K)}{\vol(K)}$ for $K$ a full-dimensional convex
body are cone angles, but the notion of cone angles is richer. For example,
for any point $q \in \R^d$, let $B_\eps(q)$ be the ball with radius $\epsilon
> 0$ centered at $q$. Then
\[
    \omega_q(C) \ \defeq \ \lim_{\eps \to 0} \frac{\vol(B_\eps(q) \cap
    C)}{\vol(B_\eps(q))}
\]
defines a cone angle which, for $q \neq 0$ does not come from a measure.
Further instances can be obtained, for example, from the construction of
Dehn--Hadwiger functionals~\cite[Sect.~2.2.2]{hadwiger}.  

The following standard construction yields a \emph{universal} cone valuation.
Let $\Z\Cones_d$ be the free abelian group with generators $e_C$ for $C \in
\Cones_d$. Let $\mathcal{U} \subset \Z\Cones_d$ be the subgroup generated by
\[
    e_{C \cup D} + e_{C \cap D} - e_{C} - e_{D}
\]
\newcommand\CG{\mathbb{S}}%
for all Cones $C,D \in \Cones_d$ such that $C \cup D, C \cap D \in \Cones_d$
and let $\mathcal{S}$ be the subgroup generated by all elements $e_C$ for
which $\dim C < d$. We call $\CG \defeq \Z\Cones_d / (\mathcal{U} +
\mathcal{S})$ the \Def{simple cone group}. Clearly, if $\phi' : \CG \to
\R$ is additive, then $\phi(C) \defeq \phi'(e_C)$ defines a valuation.
Volland~\cite{volland} essentially showed that every valuation lifts to
a homomorphism on $\CG$. We record this as follows.
\begin{thm}[Volland]
    The map $\Cones_d \to \CG$ given by $C \mapsto e_C$ is the universal
    cone valuation.
\end{thm}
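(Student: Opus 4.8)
The plan is to check directly that $\CG$, together with the map $C\mapsto e_C$, enjoys the universal property that makes it the \emph{initial} object among simple cone valuations: for every abelian group $G$ and every simple cone valuation $\phi\colon\Cones_d\to G$ there is a unique group homomorphism $\overline\phi\colon\CG\to G$ with $\overline\phi(e_C)=\phi(C)$ for all $C\in\Cones_d$, and conversely every homomorphism $\CG\to G$ arises in this way from a unique simple cone valuation. (We read ``cone valuation'' in the statement as ``simple cone valuation'': a non-simple valuation cannot factor through $\CG$, since the latter has been forced to vanish on all lower-dimensional cones.)

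The first step is to unwind the construction of $\CG$. Because $\Z\Cones_d$ is the free abelian group on the symbols $e_C$, $C\in\Cones_d$, the set map $\phi$ extends uniquely to a group homomorphism $\widetilde\phi\colon\Z\Cones_d\to G$ with $\widetilde\phi(e_C)=\phi(C)$. The inclusion--exclusion identity defining a valuation says precisely that $\widetilde\phi$ annihilates every generator $e_{C\cup D}+e_{C\cap D}-e_C-e_D$ of $\mathcal U$, and simplicity of $\phi$ says $\widetilde\phi$ annihilates every generator $e_C$ of $\mathcal S$ (in particular $e_{\{0\}}\in\mathcal S$ for $d\ge 1$, so the normalization $\phi(\{0\})=0$ is automatic). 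Hence $\mathcal U+\mathcal S\subseteq\ker\widetilde\phi$, and $\widetilde\phi$ factors through a homomorphism $\overline\phi\colon\CG=\Z\Cones_d/(\mathcal U+\mathcal S)\to G$; uniqueness of $\overline\phi$ is immediate because the classes of the $e_C$ generate $\CG$. Conversely, for any homomorphism $\psi\colon\CG\to G$ the composite $C\mapsto\psi(e_C)$ is a simple cone valuation, because the defining relators of $\mathcal U$ and $\mathcal S$ encode exactly the valuation and simplicity axioms. These two assignments are mutually inverse, which is the asserted universality; applying it with $G=\CG$ and $\psi=\mathrm{id}$ shows in passing that $C\mapsto e_C$ is itself a simple cone valuation.

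At this level of generality there is essentially no obstacle: the argument is the universal property of a free abelian group followed by that of a quotient, and the only thing requiring care is bookkeeping --- fixing the precise category (simple cone valuations valued in arbitrary abelian groups, not merely in $\R$) and checking the normalization of the zero cone. The genuinely substantive statement standing behind the word ``universal'' is that $\CG$ does not collapse, i.e. that the simplicity relations are consistent with the inclusion--exclusion relations so that interesting simple valuations (such as the standard cone angle) actually exist as homomorphisms on $\CG$; this is the content established by Volland~\cite{volland}, which, following the phrasing of the excerpt, we invoke rather than reprove.
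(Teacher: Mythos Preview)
Your argument is correct. The paper does not give its own proof of this theorem: it is stated and attributed to Volland with the phrase ``We record this as follows'', so there is no proof in the paper to compare against. Your unwinding --- universal property of the free abelian group, then of the quotient --- is the standard formal argument, and you rightly observe that the map $C\mapsto e_C$ into $\CG$ can only be universal among \emph{simple} cone valuations, since $\mathcal S$ has been quotiented out.

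One small correction to your closing paragraph: the non-collapse of $\CG$ does not require Volland's result. It follows at once from the existence of any nontrivial simple cone valuation --- for instance the standard cone angle $\SCA$ of~\eqref{eqn:stdCA}, whose valuation and simplicity properties are verified directly from Lebesgue measure, with no appeal to the structure of $\CG$. What Volland (and Gr\"omer, as the paper goes on to say) actually supply is the much stronger structural identification recorded in Corollary~\ref{cor:almost}: elements of $\CG$ can be represented by integer combinations of indicator functions, two such combinations being equal in $\CG$ exactly when they agree almost everywhere. That is the substantive input used throughout the rest of the paper and is genuinely nontrivial; your universal-property argument, by contrast, is (as you yourself say) pure bookkeeping.
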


By work of Gr\"omer~\cite{gromer} we can identify elements in $\Z\Cones_d /
\mathcal{U}$ with linear combinations of indicator functions $f =
\sum_{i=1}^ka_i [C_i]$ where $C_1,\dots,C_k \in \Cones_d$ and $a_1,\dots,a_k
\in \Z$. 

\begin{cor}\label{cor:almost}
    Let $f = \sum_i a_i[C_i]$ and $f' = \sum_i a'_i[C'_i]$. Then $f = f'$ in
    $\CG$ if and only if $f(p) = f'(p)$ for almost all $p \in \R^d$.
\end{cor}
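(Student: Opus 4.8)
The plan is to translate the claim, via Gr\"omer's identification of $\Z\Cones_d/\mathcal{U}$ with the group $G$ of $\Z$-linear combinations of indicator functions $[C]$ ($C\in\Cones_d$), into a statement about a central hyperplane arrangement. Under this identification $\CG = G/\overline{\mathcal{S}}$, where $\overline{\mathcal{S}}\subseteq G$ is the subgroup generated by all $[C]$ with $\dim C<d$. Hence $f=f'$ in $\CG$ if and only if the difference $g\defeq f-f'\in G$ — which is again of the form $\sum_k b_k[D_k]$ with $D_k\in\Cones_d$ and $b_k\in\Z$ — is a $\Z$-linear combination of indicator functions of cones of dimension $<d$. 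Since such cones are Lebesgue-null, this already yields the forward implication: if $f=f'$ in $\CG$ then $g$ vanishes off a finite union of null sets, so $f(p)=f'(p)$ for almost all $p$.

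For the converse, assume $g(p)=0$ for almost all $p\in\R^d$. Any summand $b_k[D_k]$ with $\dim D_k<d$ already lies in $\overline{\mathcal{S}}$ and may be dropped, so assume each $D_k$ is full-dimensional. Let $\mathcal{H}$ be the central arrangement of all linear hyperplanes supporting a facet of some $D_k$; it cuts $\R^d$ into finitely many relatively open polyhedral cones, whose closures lie in $\Cones_d$. Each top-dimensional cell (chamber) $\sigma$ avoids every hyperplane of $\mathcal{H}$, hence lies strictly on one side of each facet hyperplane of $D_k$; therefore $\sigma\subseteq\interior D_k$ or $\sigma\cap D_k=\emptyset$. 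Writing $A_k$ for the set of chambers contained in $\interior D_k$, it follows that for $p$ in a chamber $\sigma$ we have $p\in D_k$ iff $\sigma\in A_k$, and moreover, since $\interior D_k$ minus the nowhere-dense set $\bigcup\mathcal{H}$ equals $\bigcup_{\sigma\in A_k}\sigma$ and is dense in $\interior D_k$, the full-dimensional closed cone $D_k=\overline{\interior D_k}$ equals $\bigcup_{\sigma\in A_k}\overline{\sigma}$.

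Now compute in $\CG$. Finite intersections of cones of $\Cones_d$ are again in $\Cones_d$, so the pointwise inclusion–exclusion identity holds in $G$:
\[
    [D_k]\ =\ \sum_{\emptyset\neq B\subseteq A_k}(-1)^{|B|+1}\Bigl[\,\bigcap_{\sigma\in B}\overline{\sigma}\,\Bigr]\, .
\]
For $|B|\ge2$ the cone $\bigcap_{\sigma\in B}\overline{\sigma}$ lies in the common relative boundary of distinct chambers, hence has dimension $<d$ and its class vanishes in $\CG$. Therefore, in $\CG$,
\[
    g\ \equiv\ \sum_k b_k\sum_{\sigma\in A_k}[\overline{\sigma}]\ =\ \sum_{\sigma}c_\sigma\,[\overline{\sigma}]\, ,\qquad c_\sigma\defeq\!\!\sum_{k\,:\,\sigma\in A_k}\!\!b_k\, ,
\]
the last sum over all chambers $\sigma$ of $\mathcal{H}$. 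But for $p$ in a chamber $\sigma$ we have $g(p)=\sum_{k\,:\,p\in D_k}b_k=c_\sigma$; since $g$ vanishes a.e.\ and $\sigma$ is a nonempty open set, $c_\sigma=0$ for every $\sigma$. Hence $g\equiv0$ in $\CG$, i.e.\ $f=f'$ there.

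The only real content is the geometric bookkeeping in the second paragraph — that a full-dimensional cone of $\Cones_d$ is exactly the union of the closed chambers of the arrangement generated by its facet hyperplanes, and that intersections of distinct closed chambers drop dimension — so that the inclusion–exclusion corrections all die modulo $\mathcal{S}$. The measure-theoretic steps and the appeal to Gr\"omer's identification are routine.
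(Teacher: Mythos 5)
Your argument is correct and complete. The paper states this corollary without proof, treating it as an immediate consequence of Gr\"omer's identification; what you supply is exactly the argument that the paper leaves implicit. The forward direction is the trivial half (elements of $\overline{\mathcal{S}}$ are supported on Lebesgue-null sets), and your converse — refine all the $D_k$ by the central arrangement $\mathcal{H}$ of their facet hyperplanes, observe that each chamber is either in $\interior D_k$ or disjoint from $D_k$, and then use inclusion--exclusion so that all overlap terms drop to dimension $<d$ and hence vanish in $\CG$ — is the standard way to see that an a.e.-vanishing integer combination of full-dimensional cone indicators lies in $\overline{\mathcal{S}}$. All the individual checks hold up: the intersections $\bigcap_{\sigma\in B}\overline\sigma$ are again polyhedral cones with apex at the origin, so the inclusion--exclusion stays inside $\Cones_d$; for $|B|\ge 2$ these intersections are proper common faces of distinct chambers, hence of dimension $<d$; and each chamber is open with positive measure, so a.e.-vanishing forces every $c_\sigma=0$. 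No gaps.
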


\newcommand\Int[1]{\mathbb{T}_{#1}}
\newcommand\Ext[1]{\mathbb{O}_{#1}}
We will make extensive use of this correspondence.  In particular, we
can define the universal interior and exterior angle vectors. We will
describe them a little differently. Let $\CG[t]$ be the abelian group of
formal polynomials in $t$ with coefficients in $\CG$. For a polytope
$P$, we define
\[
    \Int{P}(t) \ \defeq \ \sum_{F} [\Tcone{F}{P}] \, t^{\dim F} \quad \text{ and
    } \Ext{P}(t) \ \defeq \ \sum_{F} [\Ocone{F}{P}] \, t^{\dim F}  \, ,
\]
where in both cases the sum is over all nonempty faces $F \subset P$. Thus, if
$\CA$ is a cone angle, then $\aInt(P)$ is naturally identified with the
coefficients of $\CA(\Int{P})$. Here is the first benefit.

\begin{prop}\label{prop:Ext0}
    Let $\CA$ be a cone angle and $P \subset \R^d$ a
    full-dimensional polytope. Then $\aExt_0(P)  =  1$.
\end{prop}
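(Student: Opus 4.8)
The plan is to compute $\aExt_0(P)=\sum_{v}\CA(\Ocone{v}{P})$, where the sum is over all vertices $v$ of $P$, directly from the fact that $\CA$ is a \emph{simple} cone valuation with $\CA(\R^d)=1$, by showing that the normal cones at the vertices tile $\R^d$ up to lower-dimensional overlaps. Recall that $\Ocone{v}{P}=\Ncone{v}{P}+\LL(v)^\perp=\Ncone{v}{P}$ since $\LL(v)=\{0\}$ for a vertex $v$. The normal fan of $P$ is a complete polyhedral fan in $\R^d$ whose maximal cones are exactly the $\Ncone{v}{P}$ for $v$ a vertex of $P$; in particular $\bigcup_v \Ncone{v}{P}=\R^d$ and any two distinct such cones intersect in a cone of dimension $<d$. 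In the language of indicator functions this says that $\sum_v [\Ncone{v}{P}](p)=1$ for all $p$ outside a measure-zero set, i.e. $\sum_v[\Ncone{v}{P}]=[\R^d]$ in $\CG$ by Corollary~\ref{cor:almost}. Applying the additive map induced by $\CA$ and using $\CA(\R^d)=1$ gives $\aExt_0(P)=1$ immediately; here simplicity of $\CA$ is what lets us ignore the lower-dimensional overlap cones in the inclusion–exclusion, since a valuation on indicator functions only sees a finite $\Z$-combination and $\CA$ kills every cone of dimension $<d$.

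Concretely, I would carry out the steps in this order. First, record that for a vertex $v$, $\Ocone{v}{P}=\Ncone{v}{P}$ and that this is a full-dimensional cone. Second, invoke the standard fact that the collection $\{\Ncone{v}{P}: v \text{ vertex of }P\}$ forms the set of maximal cones of the normal fan $\mathcal{N}(P)$, which is a complete fan; this is where I would cite a textbook reference (e.g. the discussion of normal fans in~\cite{crt} or a standard polytopes text). Third, translate completeness into the identity $\sum_v[\Ncone{v}{P}]=[\R^d]$ in $\Z\Cones_d/\mathcal{U}$, hence in $\CG$, using Corollary~\ref{cor:almost}: the two sides agree at every point that lies in the relative interior of a unique maximal cone, which is all but a measure-zero set. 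Fourth, apply the ring/group homomorphism extending $\CA$ (Volland's theorem, or just the valuation property) to both sides and conclude $\aExt_0(P)=\CA(\R^d)=1$.

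The only real subtlety — and the step I would be most careful about — is the passage from "the normal fan is complete" to the indicator identity modulo $\mathcal{U}$ and then to $\CG$: one must make sure the overlaps among the $\Ncone{v}{P}$ are genuinely lower-dimensional (true, since two maximal cones of a fan meet in a common face) so that Corollary~\ref{cor:almost} applies verbatim, and one must note that although $\CA$ is defined on $\Cones_d$, it extends additively to $\CG$ and the simplicity hypothesis guarantees it is well defined there (the images of the generators of $\mathcal{S}$ are zero). Everything else is routine. An alternative, essentially equivalent route avoids $\CG$ altogether: triangulate the normal fan and use inclusion–exclusion directly, noting that every term involving a non-full-dimensional cone is killed by simplicity of $\CA$, so the alternating sum collapses to $\CA(\R^d)=1$; I would mention this only in passing.
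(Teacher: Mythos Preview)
Your proof is correct and essentially identical to the paper's own argument: both establish $\sum_v [\Ncone{v}{P}] = [\R^d]$ in $\CG$ via Corollary~\ref{cor:almost} (the paper phrases this as ``a generic linear functional is maximized at a unique vertex'' rather than ``the normal fan is complete'') and then apply $\CA$. One small slip: the outer cone is defined as $\Ocone{F}{P} = \Ncone{F}{P} + \LL(F)$, not $+\,\LL(F)^\perp$; your conclusion $\Ocone{v}{P}=\Ncone{v}{P}$ is nonetheless correct since $\LL(v)=\{0\}$.
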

\begin{proof}
    Note that $\Ocone{v}{P} = \Ncone{v}{P}$ for any vertex $v \in P$.  Now,
    for a general $c \in \R^d$, the linear function $x \mapsto \inner{c,x}$
    will be maximized at a unique vertex of $P$. Corollary~\ref{cor:almost}
    implies
    \[
        \sum_{v} [\Ocone{v}{P}] \ = \ [\R^d]
    \]
    as elements in $\CG$. Applying $\alpha$ to both sides of the equation
    finishes the proof.
\end{proof}

To complete the first half of Theorem~\ref{thm:gram}, recall that the
\Def{homogenization} of a polytope $P \subset \R^d$ is the polyhedral cone
\[
    \hom(P) \ \defeq \ \cone( P \times \{1\}) \ = \ \{ (x,t) \in \R^d \times \R
    : \ t \ge 0, x \in t P \}
\]
Every face $\{0\} \neq F' \subseteq \hom(P)$ is of the form $\hom(F)$ for some
nonempty face $F \subseteq P$. In particular, the definition of tangent Cones
extends to faces of $\hom(P)$. Moreover,
\[
    \Tcone{F}{P} \ \cong \ \Tcone{F'}{\hom(P)} \cap \{ (x,t) : t = 0 \} \, .
\]
Note that for $F' = \{0\}$, we have $\Tcone{F'}{\hom(P)} = \hom(P)$ and hence
\[
    \Tcone{F'}{\hom(P)} \cap \{ (x,t) : t = 0 \} \ = \ \{ (0,0) \}\,.
\]
On the other
hand, if $F' = C$, then $\Tcone{C}{C} = \R^{d+1}$.  
A Brianchon-Gram relation for polyhedral Cones was proved in~\cite{AS15}.

\begin{lem}[{\cite[Lem.~4.1]{AS15}}]\label{lem:AS}
    Let $C \subseteq \R^{d+1}$ be a full-dimensional cone. Then
    as functions on $\R^{d+1}$ 
    \[
        \sum_{F'} (-1)^{\dim F'} [\Tcone{F'}{C}] \ = \
        (-1)^{d+1} [\mathrm{int}(-C)] \, ,
    \]
    where the sum is over all nonempty faces $F' \subseteq C$ and
    $\mathrm{int}(-C)$ denotes the interior of $-C$.
\end{lem}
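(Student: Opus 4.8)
\emph{The plan} is to verify the asserted identity of integer‑valued functions pointwise. Write $C=\bigcap_{i\in[m]}\{x:h_i(x)\le 0\}$ with $h_1,\dots,h_m$ the facet‑defining linear functionals, $G_i\defeq C\cap\{h_i=0\}$ the facets, and put $\Phi_C\defeq\sum_{\emptyset\neq F\subseteq C}(-1)^{\dim F}[\Tcone{F}{C}]$, the left‑hand side of the claim. For a nonempty face $F\subseteq C$ one has $\Tcone{F}{C}=\{u:h_i(u)\le 0\text{ whenever }F\subseteq G_i\}$, so for a fixed $p\in\R^{d+1}$, setting $I_+\defeq\{i:h_i(p)>0\}$ and $G_p\defeq\bigcup_{i\in I_+}G_i$, the point $p$ lies in $\Tcone{F}{C}$ exactly when $\relint F\cap G_p=\emptyset$. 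Thus $\Phi_C(p)=\sum(-1)^{\dim F}$, the sum over those nonempty faces $F$ of $C$ whose relative interior misses $G_p$, and I must show this equals $(-1)^{d+1}$ if $-p\in\mathrm{int}(C)$ and $0$ otherwise.

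I would first reduce to $C$ pointed: in general $C=\lineal(C)\oplus C_0$ with $C_0$ pointed, and faces, tangent cones and $\mathrm{int}(-C)$ all split off the factor $\lineal(C)$, so the identity for $C$ is equivalent to the one for $C_0$. For pointed $C$, the nonempty faces are $C$ itself together with the cones over the faces of a $d$‑polytope $Q$, so the Euler--Poincar\'e relation for $Q$ gives $\sum_{\emptyset\neq F\subseteq C}(-1)^{\dim F}=0$. Since the faces $F$ with $\relint F\cap G_p=\emptyset$ and the faces of $G_p$ together exhaust the nonempty faces of $C$, this yields
\[
    \Phi_C(p)\ =\ -\sum_{F\text{ face of }G_p}(-1)^{\dim F}\ =\ -\chi_c(G_p)\,,
\]
where $\chi_c$ is the compactly supported Euler characteristic, which on a polyhedral complex $K$ equals $\sum_{F\in K}(-1)^{\dim F}$. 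Finally $G_p$ is a closed cone with apex the origin, so $G_p\setminus\{0\}\cong(G_p\cap\mathbb{S}^d)\times(0,\infty)$ and multiplicativity of $\chi_c$ gives $\chi_c(G_p)=1-\chi(G_p\cap\mathbb{S}^d)$, the link being compact.

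It remains to evaluate $\chi(G_p\cap\mathbb{S}^d)$, and there are three cases. If $-p\in\mathrm{int}(C)$ then $h_i(p)>0$ for all $i$, so $I_+=[m]$, $G_p=\partial C$, and $G_p\cap\mathbb{S}^d=\partial C\cap\mathbb{S}^d\cong\mathbb{S}^{d-1}$; hence $\chi_c(G_p)=1-\chi(\mathbb{S}^{d-1})=(-1)^d$ and $\Phi_C(p)=(-1)^{d+1}$, as required. If $I_+=\emptyset$ then $G_p=\emptyset$ and $\Phi_C(p)=0$; this case contains every $p\in C$, and also $p=0$. In the remaining case $\emptyset\neq I_+\subsetneq[m]$ we have $-p\notin\mathrm{int}(C)$ and must show $\Phi_C(p)=0$: the facets $G_i$ with $h_i(p)>0$ are precisely those whose supporting hyperplanes strictly separate $p$ from $C$, i.e.\ those crossed, in turn, by a ray issuing from $\mathrm{int}(C)$ in direction $p$; by the Bruggesser--Mani line shelling of $\partial C$ they form an initial and proper segment of a shelling, hence a shellable ball, which is contractible, so $\chi(G_p\cap\mathbb{S}^d)=1$ and $\chi_c(G_p)=0$. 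This verifies the identity at every point.

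\emph{The main obstacle} is this last case, i.e.\ showing that a proper nonempty union of facets of $C$ lying on the positive side of a linear functional is contractible; once this is recognized as an instance of the shellability of polytope (or cone) boundaries, the rest is bookkeeping. A shelling‑free alternative is to slice $C$ and all its tangent cones by a hyperplane $\{h_0=1\}$ with $h_0$ positive on $C\setminus\{0\}$: the sliced tangent cones become the ordinary based tangent cones of the base polytope $Q=C\cap\{h_0=1\}$, so classical Brianchon--Gram forces $\Phi_C\equiv 0$ on $\{h_0>0\}$ (where $\mathrm{int}(-C)$ is also empty); slicing instead by $\{h_0=-1\}$ produces the reflected cones $q_G-\Tcone{G}{Q}$, and the reciprocal Brianchon--Gram relation $\sum_{\emptyset\neq G\preceq Q}(-1)^{\dim G}[q_G-\Tcone{G}{Q}]=(-1)^{\dim Q}[\relint Q]$ gives $\Phi_C=(-1)^{d+1}[\mathrm{int}(-C)]$ on $\{h_0<0\}$; finitely many such functionals in general position cover $\R^{d+1}$.
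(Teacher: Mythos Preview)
The paper does not prove this lemma---it is quoted from \cite{AS15} without argument---so there is no in-paper proof to compare your proposal against; I can only assess it on its own merits.

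Your reduction to $\Phi_C(p)=-\chi_c(G_p)$ is correct: the description $\Tcone{F}{C}=\{u:h_i(u)\le 0\text{ whenever }F\subseteq G_i\}$ gives precisely the dichotomy you claim on nonempty faces (those with $\relint F\cap G_p=\emptyset$ versus the faces of the complex $G_p$), the Euler count for the face lattice of a pointed cone vanishes, and the reduction to the pointed case is routine. The computation $\chi_c(G_p)=1-\chi(G_p\cap\mathbb{S}^d)$ and the two extreme cases $I_+=\emptyset$ and $I_+=[m]$ are also fine.

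The one place that deserves an extra sentence is the Bruggesser--Mani step in the intermediate case. As usually stated, the line-shelling argument needs the ray $q_0+tp$ (with $q_0\in\interior(C)$) to meet $\partial C$ generically, i.e.\ to cross the facet hyperplanes one at a time; this fails precisely when some $h_i(p)=0$. But the gap is only apparent: replacing $p$ by $p'\defeq p+\eps\,q_0$ for small $\eps>0$ and generic $q_0\in\interior(C)$ gives $h_i(p')=h_i(p)+\eps\,h_i(q_0)$ with $h_i(q_0)<0$, hence $h_i(p')>0$ if and only if $h_i(p)>0$. Thus $I_+(p')=I_+(p)$, so $G_{p'}=G_p$, while $p'$ is now generic; your shelling argument then applies verbatim to show $G_p\cap\mathbb{S}^d$ is a ball. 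With this one-line perturbation your first argument is complete.

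Your slicing alternative---reducing to the classical Brianchon--Gram relation on $\{h_0=1\}$ and its reciprocal form on $\{h_0=-1\}$---is also correct, and indeed closer to how the conic statement is typically deduced from the polytopal one. The point $p=0$, not covered by any such slice, is handled directly by the Euler relation you already used.
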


The following proposition proves the first half of Theorem~\ref{thm:gram}.

\begin{prop}\label{prop:Int0}
    Let $\CA$ be a cone angle on $\R^d$ and let $P \subset \R^d$ be a
    full-dimensional polytope. Then
    \[
        \aInt_0(P) - \aInt_1(P) + \aInt_2(P) - \cdots + (-1)^{d-1}
        \aInt_{d-1}(P) \ = \ (-1)^{d+1} \, .
    \]
\end{prop}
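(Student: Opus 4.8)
The plan is to apply Lemma~\ref{lem:AS} to the homogenization cone $C = \hom(P) \subseteq \R^{d+1}$ and then intersect everything with the hyperplane $H = \{(x,t) : t = 0\}$, translating the resulting identity of indicator functions into a statement about $\CA$. Concretely, by Lemma~\ref{lem:AS} we have, as functions on $\R^{d+1}$,
\[
    \sum_{F'} (-1)^{\dim F'} [\Tcone{F'}{\hom(P)}] \ = \ (-1)^{d+1} [\interior(-\hom(P))] \, ,
\]
where the sum runs over all nonempty faces $F' \subseteq \hom(P)$. I would multiply both sides by the indicator $[H]$ (equivalently, restrict to $H$): this is a legitimate operation on $\Z\Cones_{d+1}/\mathcal U$ since restricting to a coordinate subspace takes indicator functions to indicator functions, and it respects the almost-everywhere identification of Corollary~\ref{cor:almost} in the sense that I will only need the resulting relation to hold on $H$ itself, viewed as $\R^d$.

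Next I would analyze the two sides of the restricted identity. On the left, the faces $F' \subseteq \hom(P)$ with $F' \neq \{0\}$ are exactly $F' = \hom(F)$ for nonempty faces $F \subseteq P$, with $\dim F' = \dim F + 1$, and the excerpt already records $\Tcone{\hom(F)}{\hom(P)} \cap H \cong \Tcone{F}{P}$; the face $F' = \{0\}$ contributes $[\hom(P)] \cdot [H] = [\{0\}]$, which maps to $0$ under any cone angle (it is not full-dimensional), and the top face $F' = \hom(P)$ contributes $(-1)^{d+1}[\R^{d+1}] \cdot [H] = (-1)^{d+1}[\R^d]$. So, after applying $\CA$, the left side becomes
\[
    (-1)^{d+1} \CA(\R^d) + \sum_{\emptyset \neq F \subseteq P} (-1)^{\dim F + 1} \CA(\Tcone{F}{P}) \, .
\]
On the right, I need that $\interior(-\hom(P)) \cap H = \emptyset$: indeed $\hom(P)$ meets $H$ only in $\{0\}$, which lies on the boundary, not the interior, of $-\hom(P)$ (as $P$ is full-dimensional, $-\hom(P)$ is a pointed full-dimensional cone with apex $0$). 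Hence $(-1)^{d+1}[\interior(-\hom(P))] \cdot [H] = 0$ as a function on $\R^d$, and applying $\CA$ gives $0$.

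Putting the two sides together and using $\CA(\R^d) = 1$, the separated contribution of the top face $F = P$ (which has $\dim P = d$, so $\Tcone{P}{P} = \R^d$ and contributes $(-1)^{d+1}$) must be handled: writing the sum over \emph{proper} faces $F \subsetneq P$ of dimension $i \in \{0,\dots,d-1\}$ and recalling $\aInt_i(P) = \sum_{\dim F = i} \CA(\Tcone{F}{P})$, I obtain
\[
    (-1)^{d+1} + \sum_{i=0}^{d-1} (-1)^{i+1} \aInt_i(P) + (-1)^{d+1} \ = \ 0 \, ,
\]
which after multiplying by $-1$ and rearranging is exactly Gram's relation $\aInt_0(P) - \aInt_1(P) + \cdots + (-1)^{d-1}\aInt_{d-1}(P) = (-1)^{d+1}$. (One should double-check the sign bookkeeping: the term $F' = \{0\}$ of dimension $0$ and the term $F' = \hom(P)$ of dimension $d+1$ both need to be tracked carefully, and exactly one of the two $(-1)^{d+1}$'s above comes from the top face $\hom(P)$ restricting to $\R^d$ while the contribution of $F = P$ itself appears inside the sum over faces of $\hom(P)$.) The main obstacle I anticipate is precisely this sign/index accounting — making sure the restriction $\Tcone{\hom(F)}{\hom(P)} \cap H \cong \Tcone{F}{P}$ is applied with the correct dimension shift and that the degenerate faces $\{0\}$ and $\hom(P)$ are each assigned to the right place — together with justifying rigorously that multiplying the Lemma~\ref{lem:AS} identity by $[H]$ and then applying a cone angle on $\R^d$ is valid, which is where Corollary~\ref{cor:almost} and the remark that restriction sends indicators to indicators do the work.
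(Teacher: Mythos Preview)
Your approach is exactly the paper's: apply Lemma~\ref{lem:AS} to $C=\hom(P)$ and restrict to $H=\{t=0\}$. However, your sign bookkeeping contains a genuine double-count. The nonzero faces of $\hom(P)$ are precisely $\hom(F)$ for nonempty $F\subseteq P$; in particular the \emph{top} face $F'=\hom(P)$ \emph{is} the face corresponding to $F=P$. They are one and the same, not two different contributions. Thus after restriction the left-hand side of Lemma~\ref{lem:AS} reads, in $\CG$,
\[
    [\{0\}] \ + \ \sum_{\emptyset\neq F\subsetneq P}(-1)^{\dim F+1}[\Tcone{F}{P}] \ + \ (-1)^{d+1}[\R^d] \, ,
\]
with the single term $(-1)^{d+1}[\R^d]$ coming from $F'=\hom(P)$ (equivalently, from $F=P$). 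Your displayed equation
\[
    (-1)^{d+1} + \sum_{i=0}^{d-1}(-1)^{i+1}\aInt_i(P) + (-1)^{d+1} \ = \ 0
\]
therefore has one $(-1)^{d+1}$ too many; as written it would yield $\sum_i(-1)^i\aInt_i(P)=2(-1)^{d+1}$, which is false. Dropping the spurious copy gives
\[
    \sum_{i=0}^{d-1}(-1)^{i+1}\aInt_i(P) + (-1)^{d+1} \ = \ 0 \, ,
\]
and this is Gram's relation after multiplying by $-1$. Your parenthetical remark suggests you suspected a redundancy here; the resolution is that the two occurrences you were trying to distinguish are in fact the same face of $\hom(P)$.
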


\begin{proof}
     Let $C = \hom(P) \subset \R^{d+1}$. This is a
     full-dimensional cone and Lemma~\ref{lem:AS} together with the restriction to $\R^d
     \times \{0\}$ and the preceding remarks yield the following relation on functions on
     $\R^d$
    \begin{equation}\label{eqn:gen_BR}
        [\{0\}] + \sum_{ F} (-1)^{\dim F+1}
        [\Tcone{F}{P}] + (-1)^{d+1} [\R^d] \ = \ (-1)^{d+1}[\emptyset] \, ,
    \end{equation}
    where the sum is over all nonempty faces $F \subseteq P$ with $F \neq P$.
    The above equation in $\CG$
    reads
    \[
        (-1)^{d+1} [\R^d] \ = \ 
        \sum_{\emptyset \neq F \subset P} (-1)^{\dim F}[\Tcone{F}{P}] 
        \ = \ \Int{P}(-1)
    \]
    and applying $\CA$ to both sides, yields the result.
\end{proof}

\section{Belt polytopes and angle vectors}\label{sec:zono}

A convex polytope $Z \subset \R^d$ is a \Def{zonotope} if there are
$z_1,\dots,z_k \in \R^d \setminus \{0\}$ and $t \in \R^d$ such that
\[
    t + Z \ = \ \sum_{i=1}^k [-z_i,z_i]
    \ = \ \{ \lambda_1 z_1 + \cdots + \lambda_k z_k : -1 \le
    \lambda_1,\dots,\lambda_k \le 1  \} \, .
\]
Zonotopes play an important role in geometric
combinatorics~\cite[Ch.7]{ziegler}) as well as convex
geometry~\cite{GoodeyWeil}.  Faces of zonotopes are zonotopes and hence all
$2$-dimensional faces of a zonotope are centrally-symmetric polygons. In fact,
this property characterizes zonotopes; see Bolker~\cite{Bolker}. A polytope $P
\subset \R^d$ is a \Def{belt polytope} (or \Def{generalized zonotope}) if and
only if every $2$-face $F \subset P$ has an even number of edges and opposite
edges are parallel. Belt polytopes were studied by Baladze~\cite{Baladze} (see
also~\cite{Bolker}) and are equivalently characterized by the fact that their
normal fans are induced by hyperplane arrangements. 

Let $\Arr$ be a central arrangement of hyperplanes, that is, the collection of
some oriented linear hyperplanes $H^0_i \defeq z_i^\perp$ for $i=1,\dots,k$.
We write $H_i^+ = \{ x : \inner{z_i, x} > 0\}$ and $H_i^-$ accordingly. For a
point $p \in \R^d$, let $\sigma_i = \sgn \inner{z_i,p}$ for $i=1,\dots,k$.
Then
\[
    H_\sigma \ \defeq \ H_1^{\sigma_1} \cap H_2^{\sigma_2} \cap \cdots \cap
    H_k^{\sigma_k}
\]
where $\sigma = (\sigma_1,\dots,\sigma_k) \in \{-,0,+\}^k$ is a relatively
open cone containing $p$. This shows that the collection $\{ H_\sigma : \sigma
\in \{-,0,+\}^k \}$ of relatively open Cones partitions $\R^d$. The
\Def{lattice of flats} of a hyperplane arrangement $\Arr$ is the collection
of linear subspaces 
\[
    \Lflats(\Arr) \ \defeq \ \{ H_{i_1}^0 \cap H_{i_2}^0 \cap \cdots  \cap
    H_{i_r}^0  : 1 \le i_1 < \cdots < i_r \le k, r \ge 0 \}
\]
partially ordered by \emph{reverse} inclusion. This is a graded lattice with
minimal element $\R^d$ and maximal element $H_1^0 \cap \cdots \cap H_k^0$.
For every relatively open cone $H_\sigma$, we have that
\[
    \affL(H_\sigma) \ = \ \bigcap_{i : \sigma_i = 0} H_i^0  
\]
is an element of $\Lflats(\Arr)$ and we record the following consequence.

\begin{prop}\label{prop:arr_part}
    Let $\Arr$ be an arrangement of hyperplanes with $\Lflats =
    \Lflats(\Arr)$. Then any $L \in \Lflats$ is partitioned by the collection
    of relatively open Cones $R$ of $\Arr$ with $\affL(R) \subseteq L$.
\end{prop}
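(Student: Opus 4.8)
The plan is to reduce everything to the partition of $\R^d$ into the relatively open cones $H_\sigma$, which is already available, together with the identity $\LL(H_\sigma) = \bigcap_{i : \sigma_i = 0} H_i^0$ recorded just above the statement. First I would fix $L \in \Lflats$ and, invoking the definition of $\Lflats(\Arr)$, write $L = \bigcap_{i \in I} H_i^0$ for some subset $I \subseteq \{1,\dots,k\}$.

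The heart of the argument is a pointwise equivalence. Let $p \in \R^d$ and let $\sigma = \sigma(p) \in \{-,0,+\}^k$ be the sign vector with $\sigma_i = \sgn\inner{z_i,p}$, so that $p \in H_\sigma$. I claim $p \in L$ if and only if $\LL(H_\sigma) \subseteq L$. The implication ``$\Leftarrow$'' is immediate, since $p \in H_\sigma \subseteq \LL(H_\sigma)$. For ``$\Rightarrow$'', if $p \in L$ then $\inner{z_i,p} = 0$, i.e. $\sigma_i = 0$, for every $i \in I$; hence $I \subseteq \{\, i : \sigma_i = 0 \,\}$ and therefore $\LL(H_\sigma) = \bigcap_{i : \sigma_i = 0} H_i^0 \subseteq \bigcap_{i \in I} H_i^0 = L$.

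With this in hand the proposition follows. Among the relatively open cones $H_\sigma$ of $\Arr$, consider those with $\LL(H_\sigma) \subseteq L$; each such cone is then contained in $L$, because $H_\sigma \subseteq \LL(H_\sigma)$, and any two of them are disjoint since the whole family $\{ H_\sigma : \sigma \in \{-,0,+\}^k \}$ partitions $\R^d$. Finally, every $p \in L$ lies in a unique $H_\sigma$, and by the equivalence that cone satisfies $\LL(H_\sigma) \subseteq L$; hence these cones cover $L$. So they partition $L$, as claimed.

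I do not anticipate a real obstacle here: the statement is essentially a bookkeeping consequence of the partition of $\R^d$ by the $H_\sigma$. The only point requiring slight care is the forward implication of the equivalence — one should argue through the point $p$ (equivalently, through the containment of index sets $I \subseteq \{ i : \sigma_i = 0 \}$) rather than trying to read off an inclusion of flats directly, since an inclusion of linear subspaces need not mirror an inclusion of their defining index sets in general.
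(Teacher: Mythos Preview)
Your argument is correct and is exactly the ``consequence'' the paper is recording: the paper does not give a separate proof of this proposition, it simply derives it from the partition of $\R^d$ by the $H_\sigma$ and the identity $\LL(H_\sigma)=\bigcap_{i:\sigma_i=0}H_i^0$, which is precisely what you unpack. Your pointwise equivalence via the inclusion of index sets $I\subseteq\{i:\sigma_i=0\}$ is the clean way to spell this out.
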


Let $P \subset \R^d$ be a polytope of positive dimension and recall that for a
non-empty face $F \subseteq P$, we defined $\LL(F) = \affL(F)^\perp$.  We can
associate an arrangement $\Arr(P)$ with hyperplanes $\LL(e)$ for every edge $e
\subset P$. Every linear function $\ell(x) = \inner{c,x}$ yields a (possibly
trivial) orientation on the edges of $P$ and thus determines a relatively open
cone of $\Arr(P)$. It can be shown that for every nonempty face $F \subset P$,
the normal cone $\Ncone{F}{P}$ is partitioned by some relatively open Cones of
$\Arr(P)$. In the language of \cite[Sect.~7.1]{ziegler}, the fan induced by
$\Arr(P)$ \emph{refines} the normal fan of $P$ and the refinement is typically
strict.  It follows that $P$ is a belt polytope if and only if the normal fan of
$P$ coincides with the fan induced by $\Arr(P)$.  The name `belt polytope'
derives from the following fact: two faces $F, F' $ of a belt polytope $P$
satisfy $\LL(F) = \LL(F')$ if and only if $F$ and $F'$ are normally equivalent.
The collection of faces $F$ with fixed $\LL(F)$ are said to be in the same belt. 
Thus, if $P$ is a belt polytope, then $\Lflats(P) = \Lflats(\Arr(P))$ is a
lattice graded by dimension with minimum $\Lbot = \LL(v) = \R^d$ for every
vertex $v$ and maximum $\Ltop = \LL(P)$.

The \Def{Whitney numbers of the second kind} $W_i(\Lflats)$ of a graded poset
$\Lflats$ count the number of elements $a \in \Lflats$ of rank $\rk_\Lflats(a)
= i$.

\begin{prop}\label{prop:zono_ext}
    Let $P$ be a belt polytope of dimension $d$ and let $\Lflats =
    \Lflats(P)$. Then for any cone angle $\CA$
    \[
        \aExt_i(P) \ = \ W_{i}(\Lflats)
    \]
    for all $i=0,\dots,d-1$.
\end{prop}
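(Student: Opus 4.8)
The plan is to localize the computation at the flats of $P$. Recall that $\aExt(F,P) = \CA(\Ocone{F}{P})$ with $\Ocone{F}{P} = \Ncone{F}{P} + \LL(F)$, and that $\rk \LL(F) = \dim F$ in $\Lflats = \Lflats(P)$. Hence the $i$-dimensional faces of $P$ split according to the value of $\LL(F)$, which ranges exactly over the rank-$i$ elements of $\Lflats$, so
\[
    \aExt_i(P) \ = \ \sum_{\substack{L \in \Lflats \\ \rk L = i}} \ \sum_{\substack{F \subseteq P \\ \LL(F) = L}} \aExt(F,P) \, .
\]
It therefore suffices to show that $\sum_{F \colon \LL(F) = L} \aExt(F,P) = 1$ for every $L \in \Lflats$; the number $W_i(\Lflats)$ then simply counts the rank-$i$ flats. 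I would in fact prove the stronger identity
\[
    \sum_{\substack{F \subseteq P \\ \LL(F) = L}} [\Ocone{F}{P}] \ = \ [\R^d]
\]
in the simple cone group $\CG$, and then apply $\CA$, which factors through $\CG$ exactly as in the proofs of Propositions~\ref{prop:Ext0} and~\ref{prop:Int0}, using $\CA(\R^d) = 1$.

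To establish this identity, fix $L$ and set $\Arr \defeq \Arr(P)$. Since $P$ is a belt polytope, the normal fan of $P$ coincides with the fan induced by $\Arr$, so each normal cone $\Ncone{F}{P}$ is a closed cone of $\Arr$ and $R_F \defeq \relint(\Ncone{F}{P})$ is a relatively open cone of $\Arr$. As $P$ is full-dimensional, $\dim \Ncone{F}{P} = d - \dim F$, and since $\Ncone{F}{P} \subseteq \LL(F)^\perp$ with $\dim \LL(F)^\perp = d - \dim F$ we get $\LL(R_F) = \lin(\Ncone{F}{P}) = \LL(F)^\perp$. Consequently $F \mapsto R_F$ restricts to a bijection between the faces with $\LL(F) = L$ and the relatively open cones $R$ of $\Arr$ with $\LL(R) = L^\perp$ (here $L^\perp \in \Lflats(\Arr)$, the lattice of flats of $\Arr$ being the image of $\Lflats(P)$ under orthogonal complementation). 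Writing $\pi \colon \R^d \to L^\perp$ for the orthogonal projection, $\Ncone{F}{P} \subseteq L^\perp$ gives $\Ocone{F}{P} = \pi^{-1}(\Ncone{F}{P})$, that is, $[\Ocone{F}{P}](c) = [\Ncone{F}{P}](\pi(c))$. Now Proposition~\ref{prop:arr_part}, applied to the flat $L^\perp$ of $\Arr$, says that $L^\perp$ is partitioned by the relatively open cones $R$ of $\Arr$ with $\LL(R) \subseteq L^\perp$; the cells with $\LL(R) \subsetneq L^\perp$ are lower-dimensional, so after discarding them $L^\perp$ is covered, up to a null set, by the cells $R_F$ with $\LL(F) = L$. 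Passing to closures and precomposing with $\pi$ turns this into the equality $\sum_{F \colon \LL(F) = L} [\Ocone{F}{P}] = [\R^d]$ of functions almost everywhere on $\R^d$, which by Corollary~\ref{cor:almost} is the claimed identity in $\CG$.

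The crux of the argument — and essentially the only point at which the belt hypothesis is used — is the identification of $\{\Ncone{F}{P} \colon \LL(F) = L\}$ with a family of full-dimensional cells of $\Arr(P)$ tiling $L^\perp$: for a general polytope the arrangement fan merely refines the normal fan, $\relint(\Ncone{F}{P})$ need not be a single arrangement cell, and the clean correspondence $\LL(R_F) = \LL(F)^\perp$ with $L^\perp$ a flat of $\Arr(P)$ breaks down. Once normal fan $=$ arrangement fan is granted, the rest is bookkeeping together with the two ingredients already in hand, Proposition~\ref{prop:arr_part} and Corollary~\ref{cor:almost}. One should also note that the extreme flats are covered uniformly: for $L = \{0\}$ the identity $\sum_{F \colon \LL(F) = L} \aExt(F,P) = 1$ is Proposition~\ref{prop:Ext0}, and for $L = \R^d$ it reads $1 = 1$ since then $F = P$ and $\Ocone{P}{P} = \R^d$.
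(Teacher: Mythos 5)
Your proposal is correct and follows the same route as the paper: fix a flat $L$, show $\sum_{\LL(F)=L}[\Ocone{F}{P}]=[\R^d]$ in $\CG$ via Proposition~\ref{prop:arr_part} and Corollary~\ref{cor:almost}, then group faces by flat to obtain $W_i(\Lflats)$. The paper states the key identity~\eqref{eqn:ext_L} tersely ("From Proposition~\ref{prop:arr_part} we infer\dots"), whereas you spell out the implicit bookkeeping --- the bijection $F\mapsto\relint(\Ncone{F}{P})$ onto the full-dimensional cells of $\Arr(P)$ inside $L^\perp$, the null contribution of lower-dimensional cells, and the passage via $\pi^{-1}$ --- but the underlying argument is identical.
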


\begin{proof}
    Let $L \in \Lflats$. From Proposition~\ref{prop:arr_part} we infer that as
    elements of $\CG$
    \begin{equation}\label{eqn:ext_L}
        \sum_{F} [\Ocone{F}{P}]  \ = \ 
         \sum_{F} [L +\Ncone{F}{P}]  \ = \ 
        [\R^d] \, ,
    \end{equation}
    where the sum is over all faces $F \subseteq P$ with $\LL(F) = L$.
    For $i = 0,1,\dots,d-1$ fixed it follows that
    \[
        \sum_{\substack{F \subset P\\ \dim F = i}} [\Ocone{F}{P}] \ = \
        \sum_{\substack{L \in \Lflats\\ \dim L = i}}  \
        \sum_{\substack{F \subset P\\ \LL(F) = L}}
        [\Ocone{F}{P}] \ = \
        \sum_{\substack{L \in \Lflats\\ \dim L = i}} [\R^d] \ = \
        W_{i}(\Lflats) [\R^d] 
    \]
    and applying $\CA$ yields the claim.
\end{proof}

A configuration $z_1,\dots,z_n$ of $n \ge d$ vectors in $\R^d$ is
\Def{generic} if any choice of $d$ vectors are linearly independent. The
proper faces of the associated zonotope $Z$ are parallelepipeds. This
implies that the poset $\Lflats(Z) \setminus \{\Ltop\}$ is isomorphic to the
collection of subsets of $[n]$ of cardinality at most $d-1$ ordered by
inclusion and hence depends only on $n$ and $d$.

\begin{cor}\label{cor:W-rel}
    For $d \ge 1$ and $\CA$ a cone angle
    \[
        \aff \{ \aExt(P)  :  P \subset \R^d \text{ $d$-zonotope} \} \ = \
        \aff \{ \aExt(P)  :  P \subset \R^d \text{ $d$-polytope} \} \ = \
        \{ a \in \R^d : a_0 = 1 \} \, .
    \]
\end{cor}
\begin{proof}
    Proposition~\ref{prop:Ext0} implies that $\subseteq$ holds and
    thus we only need to exhibit $d$ zonotopes whose exterior angle vectors
    are linearly independent. By Proposition~\ref{prop:zono_ext}, it
    suffices to find $d$ zonotopes $Z_0,\dots,Z_{d-1} \subset \R^d$ such
    that the $d$-by-$d$ matrix $A = (a_{ij})_{i,j=0,\dots,d-1}$ with ${a_{ij}
    = W_i(Z_j)}$ has rank $d$. Let $Z_j$ be the zonotope obtained from a
    collection of $d+j$ generic vectors.  Then
    \[
        a_{ij} \ = \ \binom{d + j}{i} \quad \text{ for } i,j = 0,1,\dots,d-1
        \, .
    \]
    Row operations together with Pascal's identity then show that $A$ has
    determinant $1$, which proves the claim.
\end{proof}

The \Def{incidence algebra} $\Incidence(\poset)$ of a finite poset
$(\poset, \preceq)$ is the vector space of all functions
${h : \poset \times \poset \to \C}$, such that $h(a, c) = 0$ whenever
$a \not\preceq c$ and with multiplication
\[
        (g\ast h)(a, c) \  = \ \sum_{a \preceq b \leq c} g(a, b)h(b, c) 
\]
for $g, h \in \Incidence(\poset)$; see Stanley~\cite[Ch.~3]{EC1} for more on
this. The \Def{zeta function} $\zeta_\poset \in \Incidence(\poset)$ is given
by $\zeta_\poset(a,c) = 1$ if $a \preceq c$ and $=0$ otherwise.  The zeta function is
invertible in $\Incidence(\poset)$ with inverse given by the \Def{M\"obius
function} $\mu_\poset = \zeta_{\poset}^{-1}$. More precisely, the M\"obius
function satisfies $\mu_\poset(a,a) = 1$ and 
\[
    \mu_\poset(a,c) \ = \ -\sum_{a \prec b \preceq c} \mu_\poset(b,c)
\]
for $a \prec c$.
For a graded poset $\poset$ of rank $d$, the \Def{characteristic polynomial}
$\chi_\poset(t) \in \Z[t]$ is defined by
\[
    \chi_\poset(t) \ = \ \sum_{a \in \poset} \mu_{\poset}(\Lbot,a) \, t^{d -
    \rk(a)}  \ = \ w_0(\poset) t^d + w_1(\poset) t^{d-1} + \cdots +
    w_d(\poset) \, .
\]
The numbers $w_i(\poset)$, called the \Def{Whitney numbers of the first
kind}, are explicitly given by
\[
    w_i(\poset) \ = \ \sum_{a \,:\, \rk(a) = i} \mu_\poset(\Lbot,a) \, .
\]
\newcommand\CoChar{\psi}%
In particular, $w_0(\poset) = 1$ and $w_d(\poset) = \mu_\poset(\Lbot,\Ltop)$.
The characteristic polynomial $\chi_\Lflats(t)$ where $\Lflats =
\Lflats(\Arr)$ is the lattice of flats of a hyperplane arrangement $\Arr$
captures a number of important properties. For example, Zaslavsky's celebrated
result~\cite{Zaslavsky} states that $|\chi_\Lflats(-1)|$ is the number of
regions of $\Arr$; see also~\cite[Ch.~3.6]{crt}. Here, however, we will be
interested in the characteristic polynomial of the opposite poset
$\Lflats^\op$.

\begin{lem}\label{lem:key}
    Let $P$ be a $d$-dimensional belt polytope with lattice of flats $\Lflats
    = \Lflats(P)$ and let $\CA$ be a cone angle.  For any fixed $L \in
    \Lflats$
    \[
        \sum_{F} \aInt(F,P) \ = \
        (-1)^{d-\dim L}\mu_{\Lflats}(L,\Ltop) \ = \ 
        (-1)^{d-\dim L}\mu_{\Lflats^\op}(\Lbot,L) \, ,
    \]
    where the sum is over all faces $F \subseteq P$ with $\LL(F) = L$.
\end{lem}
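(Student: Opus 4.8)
The plan is to upgrade the statement to an identity in the simple cone group $\CG = \CG(\R^d)$, namely
\[
    S_L \ \defeq \ \sum_{F \,:\, \LL(F) = L} [\Tcone{F}{P}] \ = \ (-1)^{d-\dim L}\,\mu_{\Lflats(P)}(L,\Ltop)\,[\R^d] \,,
\]
and then to apply the cone angle $\CA$, using $\CA([\R^d]) = 1$, to recover the lemma (the ``$Z$'' in the displayed formula should read ``$P$''). I would argue by induction on the corank $c \defeq d - \dim L$ of $L$ in $\Lflats(P)$. If $c = 0$ then $L = \Ltop = \LL(P)$, so $P$ itself is the only face $F$ with $\LL(F) = L$, $\Tcone{P}{P} = \R^d$, and $\mu_{\Lflats(P)}(\Ltop,\Ltop) = 1$: the base case holds.

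For the inductive step, fix $L$ with $c \ge 1$, set $M \defeq L^{\perp}$, and let $\pi \colon \R^d \to M$ be the orthogonal projection, with $\bar P \defeq \pi(P)$. Two geometric facts, both using that $P$ is a belt polytope, drive the argument. First, since the normal fan of $P$ is the fan induced by $\Arr(P)$ (cf.\ Proposition~\ref{prop:arr_part}), the subspace $M$ is a flat of $\Arr(P)$ and $F \mapsto \pi(F)$ is a bijection from the faces $F \subseteq P$ with $\LL(F) \supseteq L$ onto the set of all faces of $\bar P$; it satisfies $\LL(\pi(F)) = \pi(\LL(F))$ and $\dim\pi(F) = \dim F - \dim L$, and hence induces a poset isomorphism $\Lflats(\bar P) \cong [L,\Ltop]_{\Lflats(P)}$, $\pi(L') \leftrightarrow L'$. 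Second, if $\LL(F) = L' \supseteq L$ then $\Tcone{F}{P}$ has lineality space exactly $L'$, so $\Tcone{F}{P} = L \oplus \pi(\Tcone{F}{P})$ and $\pi(\Tcone{F}{P}) = \Tcone{\pi(F)}{\bar P}$ (computed inside $M$, where $\bar P$ is full-dimensional). The assignment $C \mapsto C \oplus L$ is a homomorphism $\CG(M) \to \CG(\R^d)$ (it respects the defining relations of the simple cone groups and kills classes of lower-dimensional cones), and under it $[\Tcone{F}{P}]$ is the image of $[\Tcone{\pi(F)}{\bar P}]$; summing over faces of $\bar P$ with a fixed value of $\LL$ therefore identifies, for each $L' \in [L,\Ltop]$, the quantity $S_{L'}$ with the image of the corresponding sum computed in $\bar P$.

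Now I would invoke the conical Brianchon--Gram relation for $\bar P$: restricting Lemma~\ref{lem:AS} as in the proof of Proposition~\ref{prop:Int0} gives $\sum_{\emptyset \neq G \subseteq \bar P}(-1)^{\dim G}[\Tcone{G}{\bar P}] = 0$ in $\CG(M)$. Grouping the faces $G$ by $\LL(G)$, pushing the identity into $\CG(\R^d)$ via $C \mapsto C \oplus L$, and multiplying by $(-1)^{\dim L}$ (using $\dim\pi(F) = \dim F - \dim L$) turns it into
\[
    0 \ = \ \sum_{L' \in [L,\Ltop]_{\Lflats(P)}} (-1)^{\dim L'}\,S_{L'} \,.
\]
Isolating the term $L' = L$ and applying the induction hypothesis to every $L' \in (L,\Ltop]$ (each of smaller corank) gives
\[
    (-1)^{\dim L}S_L \ = \ -\sum_{L \prec L' \preceq \Ltop} (-1)^{\dim L'}(-1)^{d-\dim L'}\mu_{\Lflats(P)}(L',\Ltop)\,[\R^d] \ = \ -(-1)^{d}\Bigl(\sum_{L \prec L' \preceq \Ltop}\mu_{\Lflats(P)}(L',\Ltop)\Bigr)[\R^d] \,,
\]
and since $L \neq \Ltop$ the Möbius identity $\sum_{L \preceq L' \preceq \Ltop}\mu_{\Lflats(P)}(L',\Ltop) = 0$ shows the bracket equals $-\mu_{\Lflats(P)}(L,\Ltop)$. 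Hence $S_L = (-1)^{d-\dim L}\mu_{\Lflats(P)}(L,\Ltop)[\R^d]$, which closes the induction.

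The verification of the two geometric facts and the sign bookkeeping are routine. The main obstacle is the first geometric fact, which is exactly where the belt hypothesis is essential: for a general polytope the faces of $\pi(P)$ are not matched with the faces $F$ satisfying $\LL(F) \supseteq L$, and both that bijection and the identification $\Lflats(\bar P) \cong [L,\Ltop]_{\Lflats(P)}$ rest on the normal fan of $P$ being the fan of a hyperplane arrangement.
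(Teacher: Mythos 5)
Your argument is correct, but it reaches the conclusion by a genuinely different mechanism than the paper. The paper first treats $L=\Lbot$ directly: by Proposition~\ref{prop:NT_polar} the tangent cone at a vertex is the polar of its normal cone, so for generic $\w$ the left-hand side evaluated at $\w$ counts the regions of $\Arr(P)$ not met by the hyperplane $\w^\perp$, and the theorem of Greene and Zaslavsky identifies this count with $(-1)^{d}\mu_{\Lflats(P)}(\Lbot,\Ltop)$; general $L$ is then reduced to this case by the same orthogonal projection $\pi_L$ you use. You instead run a downward induction on corank, feeding the conical Brianchon--Gram relation of Lemma~\ref{lem:AS} (in the form already extracted in the proof of Proposition~\ref{prop:Int0}, applied to the projected polytope $\pi(P)$) into the defining recursion of the M\"obius function. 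The trade-off: your proof is self-contained relative to what the paper has already established --- it never invokes polarity or the external result of~\cite{GZ}, and in effect it re-derives the Greene--Zaslavsky count for arrangement fans from Brianchon--Gram plus M\"obius inversion --- at the cost of being longer and of requiring the well-definedness of the pushforward $\CG(L^\perp)\to\CG(\R^d)$, $[C]\mapsto[C\oplus L]$, which you correctly justify. Both proofs lean on the belt hypothesis in exactly the same place, namely the bijection between faces $F$ with $\LL(F)\supseteq L$ and faces of $\pi(P)$, equivalently the identification of $\Lflats(\pi(P))$ with the interval $[L,\Ltop]$. Your sign bookkeeping and the M\"obius step $\sum_{L\prec L'\preceq\Ltop}\mu(L',\Ltop)=-\mu(L,\Ltop)$ check out, and you are right that the ``$Z$'' in the statement is a typo for ``$P$''.
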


The proof makes use of the fact that tangent and normal
Cones are related by polarity.

\begin{prop}\label{prop:NT_polar}
    Let $P \subset \R^d$ be a full-dimensional polytope and $\v \in P$ a
    vertex. Then
    \[
        (\Ncone{\v}{P})^\polar \ = \ \{ \u \in \R^d : \inner{\u,\x} \le 0 \text{
        for all } \x \in \Ncone{\v}{P} \} \ = \ \Tcone{\v}{P} \, .
    \]
\end{prop}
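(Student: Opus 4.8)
The plan is to deduce the identity directly from the definitions together with biduality for polyhedral cones. First I would unwind the normal cone at the vertex $\v$: since the face $\{\v\}$ consists of the single point $\v$, a vector $c$ lies in $\Ncone{\v}{P}$ precisely when $\inner{c,\v} \ge \inner{c,y}$ for all $y \in P$, i.e.\ when $\inner{c, y - \v} \le 0$ for all $y \in P$. On the other hand $\Tcone{\v}{P} = \cone(-\v + P)$ is the set of nonnegative combinations of the vectors $y - \v$ with $y \in P$, and a linear functional is nonpositive on a set exactly when it is nonpositive on the cone it generates. Hence
\[
    \Ncone{\v}{P} \ = \ \{ c \in \R^d : \inner{c,\u} \le 0 \text{ for all } \u \in \Tcone{\v}{P} \} \ = \ \Tcone{\v}{P}^\vee \, .
\]

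Next I would polarize both sides. The tangent cone $\Tcone{\v}{P} = \cone(-\v + P)$ is finitely generated (for instance by the vertices of the polytope $-\v + P$), hence is a polyhedral cone and in particular closed and convex. The biduality theorem for closed convex cones then gives $\bigl(\Tcone{\v}{P}^\vee\bigr)^\vee = \Tcone{\v}{P}$, and combining this with the displayed identity yields
\[
    \Ncone{\v}{P}^\vee \ = \ \bigl(\Tcone{\v}{P}^\vee\bigr)^\vee \ = \ \Tcone{\v}{P} \, ,
\]
which is the assertion.

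There is no real obstacle here; the only points needing a line of care are the second equality in the first display — immediate, since every element of $\cone(-\v+P)$ is a nonnegative multiple of a point of $-\v+P$ — and the legitimacy of invoking biduality, which holds precisely because $\Tcone{\v}{P}$ is finitely generated and therefore closed. If one wished to sidestep biduality, the inclusion $\Tcone{\v}{P} \subseteq \Ncone{\v}{P}^\vee$ is trivial from $\Ncone{\v}{P} = \Tcone{\v}{P}^\vee$, and the reverse inclusion follows from a separation argument: any $\u \notin \Tcone{\v}{P}$ can be separated from this closed convex cone by a functional $c$ with $\inner{c,\cdot} \le 0$ on $\Tcone{\v}{P}$ but $\inner{c,\u} > 0$, and such a $c$ lies in $\Ncone{\v}{P}$, certifying $\u \notin \Ncone{\v}{P}^\vee$.
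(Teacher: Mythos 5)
Your proof is correct and follows essentially the same route as the paper: unwind the definition of $\Ncone{\v}{P}$ to get $\Ncone{\v}{P} = \Tcone{\v}{P}^\vee$ and then pass to the stated identity by biduality. The only difference is that you make the biduality step (and its justification via polyhedrality) explicit, whereas the paper leaves it implicit.
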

\begin{proof}
    \newcommand\cc{\mathbf{c}}%
    Observe that $\cc \in \Ncone{\v}{P}$ if and only if $\inner{\cc,\v} \ge
    \inner{\cc,\x}$ for all $\x \in P$. That is, if and only if $\inner{\cc,\x -
    \v} \le 0$ for all $\x \in P$ and from $\Tcone{\v}{P} = \cone(-\v + P)$
    we deduce that $\Tcone{\v}{P}^\polar = \Ncone{\v}{P}$.
\end{proof}

\begin{proof}[Proof of Lemma~\ref{lem:key}]
    We again prove the following more general statement over $\CG$
    \begin{equation}\label{eqn:GZ}
        \sum_{F} [\Tcone{F}{P}] \ = \
        (-1)^{d-\dim L} \mu_{\Lflats(P)}(L,\Ltop)  \, [\R^d] \, ,
    \end{equation}
    where the sum is over all faces $F \subseteq P$ with $\LL(F) = L$.

    Let us assume that $L = \Lbot = \{0\}$. Proposition~\ref{prop:NT_polar}
    states that $\Tcone{\v}{P}$ is precisely the polar cone
    $\Ncone{\v}{P}^\polar$.  That is, if $\w \in \interior(\Tcone{\v}{P})$, then
    the hyperplane $\w^\perp$ does not meet $\interior(\Ncone{\v}{P})$. Note
    that since $P$ is a belt polytope, the Cones $\Ncone{\v}{P}$ are the
    regions of $\Arr = \Arr(P)$.

    Hence, for a generic $\w$, the left-hand side of~\eqref{eqn:GZ} is the
    number of regions of $\Arr$ that are not intersected by $\w^\perp$.  By a
    classical result of Greene and Zaslavsky~\cite[Thm.~3.1]{GZ}, this number
    is independent of $\w$ and is exactly $(-1)^{d-\dim
    L}\mu_{\Lflats(P)}(\Lbot,\Ltop)$.

    For $L \neq \Lbot$, let $\pi_L : \R^d \to L^\perp$ be the orthogonal
    projection along $L$. Then $\pi_L(P)$ is a belt polytope and
    $\Lflats(\pi_L(P))$ is isomorphic to the interval $[L,\Ltop] \subseteq
    \Lflats(P)$.
\end{proof}

The following shows that the interior angle vectors of zonotopes are
determined by the Whitney numbers of the first kind. Together with 
Proposition~\ref{prop:zono_ext}, this proves Theorem~\ref{thm:whitney}.

\begin{proof}[Proof of Theorem~\ref{thm:whitney}]
    Let $P$ be a belt polytope with lattice of flats $\Lflats = \Lflats(P)$.
    With the help of Lemma~\ref{lem:key}, we deduce for $L \in \Lflats$
    with $\dim L = i$
    \begin{align*}
      \aInt_i(P)
      & = \ \sum_{\dim F = i} \aInt(F,P)
      \ = \ \sum_{\dim L = i} \sum_{\LL(F) = L}\aInt(F,P)
      \ = \ \sum_{\dim L = i} (-1)^{d-i}\mu_{\Lflats^\op}(\Lbot,L)\\
      & = \ (-1)^{d-i}w_{d-i}(\Lflats^\op). \qedhere
    \end{align*}
\end{proof}

In~\cite{NPS}, Novik, Postnikov, and Sturmfels introduced the
\Def{cocharacteristic polynomial} of the lattice of flats: For a zonotope
$Z$ of dimension $d$ and lattice of flats $\Lflats = \Lflats(Z)$, its
cocharacteristic polynomial is 
\[
    \CoChar_\Lflats(t) \ = \ \sum_{L \in \Lflats} |\mu_\Lflats(L,\Ltop)| \,
    t^{d - \dim L} \ = \ \sum_{i=0}^{d}  |w_{d-i}(\Lflats^\op)| t^{d-i} \ = \
    (-t)^d \chi_{\Lflats^\op}(-\tfrac{1}{t}) \, .
\]
In~\cite{NPS} the coefficients of the cocharacteristic polynomial encoded
invariants of ideals associated to matroids. Here, cocharacteristic
polynomials give us an elegant mean to prove the following theorem, which
proves the uniqueness of~\eqref{eqn:gram} in Theorem~\ref{thm:gram}.

\begin{thm}\label{thm:w-rel}
    For $d \ge 1$, let $\CA : \Cones_d \to \R$ be a cone angle. Then
    \[
        \aff \{ \aInt(Z)  :  Z \subset \R^d \text{ $d$-zonotope} \} =
        \{ (a_0,\dots,a_{d-1}) \in \R^d : a_0 - a_1 + \cdots +
        (-1)^{d-1}a_{d-1} \kern-1pt=\kern-1pt (-1)^{d+1} \}.
    \]
\end{thm}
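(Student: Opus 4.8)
The plan is to follow the template of Corollary~\ref{cor:W-rel}. Proposition~\ref{prop:Int0} already yields the inclusion ``$\subseteq$'': every full-dimensional polytope $P\subset\R^d$ satisfies Gram's relation, so the left-hand side is contained in the affine hyperplane
\[
    \mathcal{H} \ \defeq \ \{(a_0,\dots,a_{d-1})\in\R^d : a_0 - a_1 + \cdots + (-1)^{d-1}a_{d-1} = (-1)^{d+1}\},
\]
which has dimension $d-1$. It therefore suffices to produce $d$ polytopes whose interior angle vectors are affinely independent, and by Theorem~\ref{thm:whitney} these may be sought among zonotopes.

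I would take $Z_n\subset\R^d$ to be the zonotope spanned by $n$ generic vectors, for $n=d,d+1,\dots,2d-1$, so that $\Lflats(Z_n)\setminus\{\Ltop\}$ is the poset of subsets of $[n]$ of cardinality at most $d-1$ ordered by inclusion, as recalled before Corollary~\ref{cor:W-rel}. For a flat $L$ of rank $i<d$ the interval $[L,\Ltop]$ is of the same kind --- on $n-i$ elements and with top of rank $d-i$ --- so a short M\"obius computation using the partial-sum identity $\sum_{k=0}^{m-1}(-1)^k\binom{N}{k}=(-1)^{m-1}\binom{N-1}{m-1}$ gives $\mu_{\Lflats(Z_n)}(L,\Ltop)=(-1)^{d-i}\binom{n-i-1}{d-i-1}$. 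Since $\Lflats(Z_n)$ has $\binom{n}{i}$ flats of rank $i$ for $i<d$, Lemma~\ref{lem:key}, summed over flats so that $\aInt_i(Z_n)=(-1)^{d-i}\sum_{\dim L=i}\mu_{\Lflats(Z_n)}(L,\Ltop)$, yields
\[
    \aInt_i(Z_n) \ = \ \binom{n}{i}\binom{n-i-1}{d-i-1}, \qquad i=0,\dots,d-1 .
\]
Equivalently, $\CoChar_{\Lflats(Z_n)}(t)=1+\sum_{i=0}^{d-1}\binom{n}{i}\binom{n-i-1}{d-i-1}t^{d-i}$, so the interior angle vector of $Z_n$ is just the coefficient vector of its cocharacteristic polynomial; for $d=3$ and $n=3,4,5$ this reproduces the vectors $(1,3,3)$, $(3,8,6)$, $(6,15,10)$ of Figure~\ref{fig:zono}.

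It remains to show that $\aInt(Z_d),\aInt(Z_{d+1}),\dots,\aInt(Z_{2d-1})$ are affinely independent, equivalently that the $d\times d$ matrix whose $n$-th row is $(1,\aInt_0(Z_n),\dots,\aInt_{d-1}(Z_n))$ is invertible. One route is a direct evaluation of this determinant by repeated use of Pascal's identity, in the spirit of the proof of Corollary~\ref{cor:W-rel}. A perhaps cleaner alternative is to observe that each coordinate $n\mapsto\aInt_i(Z_n)$ is a polynomial in $n$ of degree exactly $d-1$ (a product of a degree-$i$ and a degree-$(d-1-i)$ polynomial, each with positive leading coefficient), so $n\mapsto\aInt(Z_n)$ parametrizes a polynomial curve $\gamma(n)=c_0+nc_1+\cdots+n^{d-1}c_{d-1}$ in $\R^d$. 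If the ``velocity vectors'' $c_1,\dots,c_{d-1}$ are linearly independent, then writing the $(d+1)\times d$ matrix with columns $(1,\gamma(n_j))^{\top}$ as a fixed $(d+1)\times d$ matrix times the $d\times d$ Vandermonde matrix $(n_j^i)_{i,j}$ shows that $\gamma$ evaluated at any $d$ distinct integers produces affinely independent vectors; since $\gamma$ already lies in $\mathcal{H}$, these $d$ vectors span $\mathcal{H}$, and the proof is complete. Either way, the crux --- and what I expect to be the main obstacle --- is exactly this nondegeneracy statement about a matrix (respectively, about the vectors $c_1,\dots,c_{d-1}$) built from products of two binomial coefficients; I expect it to reduce to a short but not entirely mechanical binomial computation.
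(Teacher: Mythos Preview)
Your strategy coincides with the paper's: both reduce via Proposition~\ref{prop:Int0} and Theorem~\ref{thm:whitney} to the same family of generic zonotopes $Z_{d+j}$, $j=0,\dots,d-1$, and both finish by showing their interior angle vectors are affinely independent. Your closed formula $\aInt_i(Z_n)=\binom{n}{i}\binom{n-i-1}{d-i-1}$ is correct (and is not stated explicitly in the paper). But the nondegeneracy step you yourself flag as ``the crux'' is a genuine gap: neither the determinant with entries $\binom{n}{i}\binom{n-i-1}{d-i-1}$ nor the linear independence of the coefficient vectors $c_1,\dots,c_{d-1}$ falls out mechanically, so as written the proof is incomplete.

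The paper closes this gap by a different device. Rather than attacking the product-of-binomials matrix head on, it invokes a recurrence of Novik--Postnikov--Sturmfels for the cocharacteristic polynomials,
\[
    \CoChar_{d,j}(t) \ = \ \CoChar_{d-1,j}(t) + \binom{d-1+j}{j}\,t(t+1)^{d-1}\,.
\]
Since $\deg \CoChar_{d-1,j}=d-1$, any relation $\sum_j \lambda_j \CoChar_{d,j}=0$ forces both $\sum_j \binom{d-1+j}{j}\lambda_j=0$ and $\sum_j \lambda_j \CoChar_{d-1,j}=0$; iterating puts $\lambda$ in the kernel of the Pascal matrix $\bigl(\binom{i+j}{j}\bigr)_{0\le i,j\le d-1}$, whose determinant is~$1$. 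This sidesteps your obstacle entirely. Incidentally, your explicit formula does imply this recurrence---comparing coefficients of $t^m$ reduces to the identity $\binom{m+j-1}{m-1}\binom{d-1+j}{d-m}=\binom{d-1+j}{j}\binom{d-1}{m-1}$---so you could also recover the paper's argument from your computation rather than pursue the harder direct determinant.
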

\begin{proof}
    Using Theorem~\ref{thm:whitney}, it suffices to produce $d$ zonotopes
    $Z_0,\dots,Z_{d-1}$ whose cocharacteristic polynomials are linearly
    independent.

    For $j \ge 0$, let $Z_j$ be the $d$-dimensional zonotope of $d+j$ generic
    vectors and let $\CoChar_{d,j}(t)$ be its associated cocharacteristic
    polynomial. From~\cite[Prop.~4.2]{NPS} we deduce that these polynomials
    satisfy the recurrence
    \[
        \CoChar_{d,j}(t) \ = \ \CoChar_{d-1,j}(t)  + \binom{d-1+j}{j} t
        (t+1)^{d-1} 
    \]
    for $d \ge 1$ and $\CoChar_{0,j}(t) = 1$. We claim that the
    cocharacteristic polynomials $\CoChar_{d,j}(t)$ for ${0 \le j \le d-1}$ are
    \renewcommand\l{\lambda}%
    linearly independent. Indeed, the recursion and the fact that $\deg
    \CoChar_{d,j}(t) = d$ shows that $\sum_j \l_j \CoChar_{d,j}
    = 0$ for $\l_0,\dots,\l_{d-1} \in \R$ 
    if and only if
    \[
    \sum_{j=0}^{d-1} \binom{d-1+j}{j} \l_j \ = \ 0 \quad \text{ and } \quad
    \sum_{j=0}^{d-1} \CoChar_{d-1,j}(t) \l_j \ = \ 0 \, .
    \]
    Iterating this idea, it follows that $\l = (\l_0,\dots,\l_d)$ is in the
    kernel of the $d$-by-$d$ matrix $A$ with entries $\binom{i+j}{j}$ for $i,j
    = 0,\dots,d-1$. Again appealing to Pascal's identity, it is easy to see
    that $\det A = 1$, which completes the proof.
\end{proof}

\section{Connecting angles with M\"obius inversion}\label{sec:incalg}

In this section we take an algebraic approach to the occurrence of the Whitney
numbers of the lattice of flats of a belt polytope in the previous sections.
The \Def{face lattice} of a polytope $P$ is the collection $\Lfaces(P)$ of
faces of $P$ ordered by inclusion.  For a given belt polytope $P$, we define a
certain subalgebra of $\Incidence(\Lfaces(P))$. As it will turn out the map $F
\mapsto \LL(F)$ yields a pair of transformations and Theorem~\ref{thm:whitney}
follows from the fact that the two transformations are adjoint. In
particular, we derive a generalization of a result of Klivans and
Swartz~\cite{KlivansSwartz} regarding spherical intrinsic volumes and Whitney
numbers.

\newcommand{\posetq}{\mathcal{Q}}%
Let $\poset, \posetq$ be two posets. A surjective and order preserving map
$\phi : \poset \to \posetq$ induces a linear transformation $\phi_\ast :
\Incidence(\poset) \to \Incidence(\posetq)$ by
\[
    \phi_\ast h(q,q') \ \defeq \ \ \frac{1}{|\phi^{-1}(q')|} 
    \sum_{ \substack{p \in \phi^{-1}(q)\\ p' \in \phi^{-1}(q')}} h(p,p') 
\]
called the \Def{pushforward} of $h$.  Let $\Incidence_\phi(\poset) \subseteq
\Incidence(\poset)$ be the vector subspace of all elements $h \in
\Incidence(\poset)$ such that for all $q,q' \in \posetq$
\begin{equation}\label{eqn:phi}
   \sum_{p \in \phi^{-1}(q)} h(p, p_1') \ = \ \sum_{p \in \phi^{-1}(q)} h(p,
   p_2')
   \qquad \text{ for all } \ p'_1,p'_2  \in \phi^{-1}(q') \, .
\end{equation}
The neutral element $\delta \in \Incidence(\poset)$ is defined by $\delta(x,y)
= 1$ if $x=y$, and $=0$ otherwise. Clearly, $\delta \in
\Incidence_\phi(\poset)$ and thus $\Incidence_\phi(\poset) \neq \emptyset$.  For
an element $h \in \Incidence_\phi(\poset)$, the pushforward simplifies to
\[
    \phi_\ast h(q,q') \ = \ \sum_{p \in \phi^{-1}(q)} h(p,p')
\]
for any $p' \in \phi^{-1}(q')$. 

\begin{prop}\label{prop:Rf_is_alg}
    $\Incidence_\phi(\poset)$ is a subalgebra of $\Incidence(\poset)$ and
    $\phi_\ast : \Incidence_\phi(\poset) \to \Incidence(\posetq) $ is an
    algebra map. 
\end{prop}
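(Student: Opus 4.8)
The claim has two parts: (i) $\Incidence_\phi(\poset)$ is closed under the convolution product $\ast$ (it obviously contains $\delta$ and is a linear subspace), and (ii) the restriction of the pushforward $\phi_\ast$ to $\Incidence_\phi(\poset)$ is multiplicative. I would prove both simultaneously by a direct computation with the simplified formula $\phi_\ast h(q,q') = \sum_{p \in \phi^{-1}(q)} h(p,p')$ that holds for $h \in \Incidence_\phi(\poset)$ and \emph{any} representative $p' \in \phi^{-1}(q')$. The strategy is: take $g,h \in \Incidence_\phi(\poset)$, fix $q \preceq q''$ in $\posetq$ and representatives, expand $(g \ast h)(p,p'')$, push the sum over the fiber $\phi^{-1}(q)$ inside, and regroup the middle index $b$ according to the fiber it lands in. The defining property~\eqref{eqn:phi} of $g$ and $h$ is exactly what lets the inner fiber-sums be evaluated, and what shows the outcome is independent of the chosen representatives.

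\begin{proof}[Proof of Proposition~\ref{prop:Rf_is_alg}]
    Since $\delta \in \Incidence_\phi(\poset)$, the subspace is nonzero, and linearity of~\eqref{eqn:phi} shows $\Incidence_\phi(\poset)$ is a linear subspace. It remains to show that for $g,h \in \Incidence_\phi(\poset)$ the convolution $g \ast h$ again lies in $\Incidence_\phi(\poset)$ and that $\phi_\ast(g\ast h) = \phi_\ast g \ast \phi_\ast h$.

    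Fix $q \preceq q''$ in $\posetq$ and a representative $p'' \in \phi^{-1}(q'')$. For $p \in \phi^{-1}(q)$ compute
    \[
        \sum_{p \in \phi^{-1}(q)} (g \ast h)(p, p'')
        \ = \ \sum_{p \in \phi^{-1}(q)} \ \sum_{p \preceq b \preceq p''} g(p,b) h(b,p'')
        \ = \ \sum_{q' : q \preceq q' \preceq q''} \ \sum_{b \in \phi^{-1}(q')} h(b,p'') \sum_{p \in \phi^{-1}(q)} g(p,b) \, ,
    \]
    where in the last step we grouped the elements $b$ with $p \preceq b \preceq p''$ by the value $q' = \phi(b)$; note $\phi(p) = q \preceq q'$ and $q' = \phi(b) \preceq \phi(p'') = q''$ since $\phi$ is order preserving, while conversely every such $b$ with $p \preceq b$ for some $p \in \phi^{-1}(q)$ contributes. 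Since $g \in \Incidence_\phi(\poset)$, the inner sum $\sum_{p \in \phi^{-1}(q)} g(p,b)$ equals $\phi_\ast g(q,q')$, independent of the choice of $b \in \phi^{-1}(q')$; pulling it out and using $h \in \Incidence_\phi(\poset)$ on the remaining sum gives
    \[
        \sum_{p \in \phi^{-1}(q)} (g \ast h)(p, p'')
        \ = \ \sum_{q \preceq q' \preceq q''} \phi_\ast g(q,q') \sum_{b \in \phi^{-1}(q')} h(b,p'')
        \ = \ \sum_{q \preceq q' \preceq q''} \phi_\ast g(q,q') \, \phi_\ast h(q',q'') \, .
    \]
    The right-hand side is $(\phi_\ast g \ast \phi_\ast h)(q,q'')$, which in particular does not depend on the chosen representative $p'' \in \phi^{-1}(q'')$. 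Hence $g \ast h$ satisfies~\eqref{eqn:phi}, so $g \ast h \in \Incidence_\phi(\poset)$, and $\phi_\ast(g \ast h)(q,q'') = (\phi_\ast g \ast \phi_\ast h)(q,q'')$. Since $\phi_\ast \delta = \delta$ as well, $\phi_\ast$ restricted to $\Incidence_\phi(\poset)$ is an algebra homomorphism.
\end{proof}

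\noindent\textbf{Main obstacle.} The only delicate point is the bookkeeping in the regrouping step: one must check that summing over $b$ with $p \preceq b \preceq p''$ for a fixed $p \in \phi^{-1}(q)$ and then over $p$ is the same as summing over $q'$ with $q \preceq q' \preceq q''$ and over $b \in \phi^{-1}(q')$ with $b \preceq p''$, using that $g(p,b) = 0$ unless $p \preceq b$. Once one is careful that $h(b,p'')=0$ forces $b \preceq p''$ and that order-preservation pins down $q \preceq q' \preceq q''$, the defining identity~\eqref{eqn:phi} does all the remaining work, and the representative-independence of the final expression is automatic.
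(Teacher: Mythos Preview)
Your argument is correct and follows essentially the same route as the paper's proof: push the fiber-sum $\sum_{p\in\phi^{-1}(q)}$ through the convolution, regroup the middle summand $b$ by its image $\phi(b)$, and apply~\eqref{eqn:phi} first to $g$ and then to $h$. Your write-up is in fact slightly more explicit than the paper's (which leaves the closure of $\Incidence_\phi(\poset)$ under $\ast$ to ``a similar computation''), since you observe that the independence of the final expression from the chosen representative $p''$ simultaneously yields both closure and multiplicativity of $\phi_\ast$; the only cosmetic difference is that the paper sums over all $s\in\posetq$ rather than restricting to the interval $[q,q'']$, which avoids the bookkeeping you flag in your ``Main obstacle'' paragraph.
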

\begin{proof}
    Let $g, h \in \Incidence_\phi(\poset)$. For $q, q' \in \posetq$ 
    and $p' \in \phi^{-1}(q')$ arbitrary we compute
    \begin{align*}
            (\phi_\ast g \ast \phi_\ast h)(q, q') 
            \ &= \
              \sum_{s \in \posetq} \phi_\ast g(q, s) \cdot \phi_\ast h(s, q') 
            \ = \
            \sum_{p \in \phi^{-1}(q)} \sum_{s \in \posetq} \sum_{r \in \phi^{-1}(s)} g(p,
            r) \cdot h(r, p')\\
            \ &= \
            \sum_{p \in \phi^{-1}(q)} \sum_{r \in \poset} g(p, r) \cdot h(r, p')
            \ = \ 
            \sum_{p \in \phi^{-1}(q)} (g \ast h)(p, p') \, .
    \end{align*}
    Since the left-hand side does not depend on the choice of $p'$, we see that $g \ast h
    \in \Incidence_\phi(\poset)$ and therefore $\phi_\ast g \ast \phi_\ast h = \phi_\ast
    (g \ast h)$. Since $\delta \in \Incidence_\phi(\poset)$, this shows that
    $\Incidence_\phi(\poset)$ is a subalgebra.
\end{proof}

For a graded poset $\poset$ of rank $d$, we can define a binary operation
$\ast_k : \Incidence(\poset) \times \Incidence(\poset) \to \Incidence(\poset)$
for $k = 0,\dots,d$ by
\begin{equation}\label{eqn:k-prod}
    (g \ast_k h)(a,c) \ \defeq \ \sum_{b \,:\, \rk(b)=k} g(a,b) \, h(b,c) \, .
\end{equation}
By definition $g \ast h = \sum_k g \ast_k h$. It is noteworthy that
$\ast_k$ and $\ast$ are associative operations, i.e., for $0 \leq k \leq l
\leq d$ and $g,h,m \in \Incidence(\poset)$
\[
  g \ast (h \ast_k m) = (g \ast h) \ast_k m \, , \qquad
  g \ast_k (h \ast m) = (g \ast_k h) \ast m \, , \qquad
  g \ast_k (h \ast_l m) = (g \ast_k h) \ast_l m \, .
\]

The proof of Proposition~\ref{prop:Rf_is_alg} carries over verbatim to prove
the following corollary.

\begin{cor}\label{cor:pushfor_k}
    Let $\poset$ and $\posetq$ be ranked posets. If $\phi : \poset \to
    \posetq$ is a surjective order preserving map that preserves rank, then
    \begin{equation}\label{eqn:pushfor_k}
        \phi_\ast( g \ast_k h) \ = \ \phi_\ast g  \ast_k \phi_\ast h \, .
    \end{equation}
\end{cor}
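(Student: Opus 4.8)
The plan is to transcribe the computation in the proof of Proposition~\ref{prop:Rf_is_alg} essentially word for word, working with $g,h \in \Incidence_\phi(\poset)$ so that pushforwards take the simplified form $\phi_\ast g(q,s) = \sum_{p\in\phi^{-1}(q)} g(p,r)$ for any $r \in \phi^{-1}(s)$. The only modification needed is that the internal sum is now restricted to middle elements of a fixed rank $k$, and the single new fact that makes the argument go through is rank-preservation of $\phi$: if $\rk(r) = k$ then $\rk(\phi(r)) = k$, so the fibers $\phi^{-1}(s)$ over elements $s \in \posetq$ of rank $k$ partition the set of rank-$k$ elements of $\poset$ (here surjectivity is used). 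Thus $\sum_{r : \rk(r)=k}$ may be rewritten as $\sum_{s : \rk(s)=k}\sum_{r\in\phi^{-1}(s)}$, which is exactly the fiber-regrouping step used in Proposition~\ref{prop:Rf_is_alg} but now respecting the grading.

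Concretely, I would fix $q,q'\in\posetq$ and an arbitrary $p'\in\phi^{-1}(q')$, expand $\phi_\ast(g\ast_k h)(q,q')$ as the double sum $\sum_{p\in\phi^{-1}(q)}\sum_{r:\rk(r)=k} g(p,r)h(r,p')$, regroup the inner sum over fibers as above, collapse $\sum_{p\in\phi^{-1}(q)} g(p,r)$ to $\phi_\ast g(q,s)$ (valid because $g \in \Incidence_\phi(\poset)$ makes this value independent of the chosen $r\in\phi^{-1}(s)$), and collapse $\sum_{r\in\phi^{-1}(s)} h(r,p')$ to $\phi_\ast h(s,q')$ (valid because $h\in\Incidence_\phi(\poset)$ and $p'\in\phi^{-1}(q')$). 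This yields $\sum_{s:\rk(s)=k} \phi_\ast g(q,s)\,\phi_\ast h(s,q') = (\phi_\ast g \ast_k \phi_\ast h)(q,q')$, which is the claim. The same bookkeeping, applied with $h$ replaced by its rank-truncations, also shows that $\Incidence_\phi(\poset)$ is closed under $\ast_k$, so the left-hand side genuinely lies in the subalgebra and the identity is meaningful.

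There is no real obstacle here: the mathematical content is the one-line observation that a rank-preserving surjection has fibers over the rank-$k$ stratum of $\posetq$ partitioning the rank-$k$ stratum of $\poset$, and everything else is the verbatim manipulation already carried out for $\ast$. The only point requiring care is to invoke the defining condition~\eqref{eqn:phi} precisely at the two places where an inner sum over a fiber is replaced by a pushforward value, and to check that nothing beyond rank-preservation and surjectivity of $\phi$ is being used.
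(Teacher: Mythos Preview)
Your proposal is correct and is exactly the approach the paper intends: the paper's entire proof is the sentence ``The proof of Proposition~\ref{prop:Rf_is_alg} carries over verbatim,'' and you have spelled out that verbatim argument together with the one additional observation---that rank-preservation of $\phi$ lets the inner sum over rank-$k$ elements of $\poset$ be regrouped fiberwise over rank-$k$ elements of $\posetq$---which is precisely what is needed. There is nothing missing.
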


Let $C \subset \R^d$ be a polyhedral cone and let $\Lfaces_+(C)$ the
collection of nonempty faces of $C$ partially ordered by inclusion. For a
given cone angle $\CA$, we note that the interior and exterior angles
\begin{align*}
  \aInt(F,G) \ &= \ \CA(\Tcone{F}{G})  \ = \ \CA(\Tcone{F}{G} + \affL(G)^\perp)\\
  \aExt(F,G) \ &= \ \CA(\Ocone{F}{G}) \ = \ \CA(\Ncone{F}{G} + \affL(F) )\\
\end{align*}
for faces $F \subseteq G \subseteq C$ are naturally elements of the incidence
algebra $\Incidence(C) \defeq \Incidence(\Lfaces_+(C))$. Furthermore, let us
define:
\begin{align*}
  \aInt'(F,G) \ \defeq \ (-1)^{\dim G - \dim F} \aInt(F,G)\,.
\end{align*}

We call two cone angles $\CA, \CAb$ \Defn{complementary} if for all
polyhedral Cones $C \subseteq \R^d$
\begin{equation}\label{eqn:compl}
    \aInt' \ast \bExt \ = \ \delta_C \, ,
\end{equation}
where $\delta_C$ is the neutral element of $\Incidence(C)$. As a
notable example, the standard cone angle $\SCA$ is self-complementary,
i.e. $\widehat{\SCA}' \ast \widecheck{\SCA} = \delta_C$ for all
polyhedral Cones $C \subseteq \R^d$;
see~\cite{McM-angle}. Complementary angles were studied by
McMullen~\cite{McM-polytopealgebra} under the name of \emph{inverse}
angles. In~\cite{McM-polytopealgebra}, an \Defn{angle functional} is a
collection of normalized and simple valuations $\CA_L$ for Cones in
linear subspaces $L \subseteq \R^d$. The angle of a cone
$C \subseteq \R^d$ is then $\CA_L(C)$ where $L = \affL(C)$.  In this
framework, we define the cone angle $\CA_L$ on a linear subspace
$L \subset \R^d$ by $\CA_L(C) \defeq \CA(C + L^\perp)$ for any cone
$C \subseteq L$. The next lemma is an adaptation
of~\cite[Lemma~46]{McM-polytopealgebra}.

\begin{lem}\label{lem:comp_angle}
    For every cone angle $\CA$ there is a complementary cone angle $\CAb$.
\end{lem}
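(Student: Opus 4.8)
The plan is to show that $\bExt$ is already forced by $\aInt$, and then to realize it by an explicitly constructed cone angle, adapting McMullen. Since $\aInt(F,F) = \CA(\Tcone{F}{F}) = \CA(\R^d) = 1$ for every face $F$, the element $\aInt$ is a unit in every incidence algebra $\Incidence(C)$, so the equation $\aInt \ast \bExt = \zeta_C$ has the unique solution $\bExt = \aInt^{-1} \ast \zeta_C$. The substance of the lemma is that these data, as $C$ ranges over all cones, come from a single cone angle, i.e.\ that there is a cone angle $\CAb$ with $\bExt(F,G) = \CAb(\Ocone{F}{G})$ for all faces $F \subseteq G$ of all cones $C$. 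Note that $\Ocone{F}{G} = \Ncone{F}{G} + \LL(F)$ is always full-dimensional, so only the values of $\CAb$ on full-dimensional cones are needed; we extend $\CAb$ by $0$ on lower-dimensional cones, which is compatible with simplicity.

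First I would construct $\CAb$ (together with all its restrictions $\CAb_L$, so that $\CAb_L(C) = \CAb(C + L^\perp)$) by induction on dimension. For a full-dimensional cone $D$ with nonzero lineality space $L$, set $\CAb(D) \defeq \CAb_{L^\perp}(D \cap L^\perp)$: here $\dim L^\perp < d$, so $\CAb$ on $L^\perp$ is already available, and $D \cap L^\perp$ is a pointed full-dimensional cone in $L^\perp$. For a pointed full-dimensional cone $D$, use the complementary relation for the cone $D^\vee$ on the interval $[\{0\},D^\vee]$: since $\Ncone{\{0\}}{D^\vee} = D$, this reads
\[
    \CAb(D) \ = \ 1 \ - \!\!\sum_{\{0\} \neq H \leq D^\vee}\!\! \CA_{\LL(H)}(H)\;\CAb(\Ocone{H}{D^\vee}) \, ,
\]
and on the right-hand side each $\Ocone{H}{D^\vee}$ has lineality containing $\LL(H) \neq \{0\}$, so every term $\CAb(\Ocone{H}{D^\vee})$ reduces, via the rule of the previous sentence, to a value of $\CAb$ on a proper subspace, already defined by induction. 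Because $D \mapsto D^\vee$ is an involution on pointed full-dimensional cones, this determines $\CAb$ on all of them, hence on all of $\Cones_d$.

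It then remains to verify three points. The normalization $\CAb(\R^d) = 1$ is immediate, as $\R^d$ has lineality space $\R^d$ and $\CAb_{\{0\}}(\{0\}) = 1$. The complementary relation $\aInt \ast \bExt = \zeta_C$ for an \emph{arbitrary} cone $C$ follows by localizing: an interval $[F,G]$ of the face lattice of $C$, equipped with the functions $\aInt$ and $\bExt$, is isomorphic to the face lattice of $\Tcone{F}{G}$ with its own $\aInt$ and $\bExt$ (tangent and normal cones compose); quotienting out the lineality of $\Tcone{F}{G}$ then lands the relation either in a lower-dimensional instance, covered by the induction hypothesis, or in the defining relation above. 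The main obstacle is the remaining point: that $\CAb$ is a valuation. For this I would pass to the universal simple cone group --- by Gr\"omer's identification of $\Z\Cones_d/\mathcal{U}$ with integer combinations of indicator functions and by Corollary~\ref{cor:almost}, it suffices to check that the identity defining $\CAb$ is an equality of such combinations valid almost everywhere. Concretely one expands the normal-fan sum above and tracks how the outer cones $\Ocone{H}{D^\vee}$ behave under $D \mapsto D \cup D'$ and $D \mapsto D \cap D'$ --- equivalently, under $D^\vee \mapsto D^\vee \cap D'^\vee$ and $D^\vee \mapsto \conv(D^\vee \cup D'^\vee)$ --- controlling the indicator functions of the polar cones involved by the conical Brianchon--Gram relation (Lemma~\ref{lem:AS}). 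This bookkeeping, matching the face structures of $D$, $D'$, $D \cup D'$ and $D \cap D'$ through polarity, is the hard part and is where McMullen's argument is adapted.
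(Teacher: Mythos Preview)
The paper does not give a self-contained proof of this lemma. It cites McMullen~\cite[Lemma~46]{McM-polytopealgebra} for the existence of a compatible family of angle functionals $\{\CAb_L\}_L$ satisfying the complementary relation, and then asserts without further argument that ``for our definition of angle, the family of angles is of the form $\CAb_L(C) = \CAb(C + L^\perp)$ for some cone angle $\CAb$.'' Your proposal therefore goes beyond what the paper itself supplies: you sketch the inductive construction of $\CAb$ that underlies McMullen's lemma and correctly isolate the valuation property as the substantive point.

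The inductive scheme you describe---define $\CAb$ on full-dimensional cones with nonzero lineality by passing to the orthogonal complement of the lineality space, and on pointed full-dimensional cones by forcing the complementary relation on the interval $[\{0\}, D^\vee]$---is McMullen's recursion, and your reduction of the general complementary relation on $[F,G]$ to the defining one via the identification of $[F,G]$ with the face poset of $\Tcone{F}{G}$ is correct. What is not yet a proof is the valuation step. Tracking outer cones under polarity (so that $D \cup D' \leftrightarrow D^\vee \cap D'^\vee$ and $D \cap D' \leftrightarrow D^\vee + D'^\vee$) and appealing to the conical Brianchon--Gram relation is a plausible strategy, but the face structures of $D^\vee \cap D'^\vee$ and $D^\vee + D'^\vee$ are not simple combinations of those of $D^\vee$ and $D'^\vee$, so the bookkeeping you allude to is substantial and not obviously reducible to Lemma~\ref{lem:AS} alone. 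That said, since the paper omits all of this and simply defers to McMullen, your sketch is at least as detailed as the paper's own treatment, and in the parts you do spell out it is correct.
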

\begin{proof}
    Lemma~46 in \cite{McM-polytopealgebra} guarantees the existence of
    a complementary angle functional $\beta_L$ for the angle functional $\alpha_L$
    as defined above, such that~\eqref{eqn:compl} is satisfied.  It
    can be shown that $\beta_L$ is of the form
    $\CAb_L(C) = \CAb(C + L^\perp)$ for some cone angle $\CAb$.
\end{proof}

\newcommand\newMin{\bot}%
Let $P \subset \R^d$ be a belt polytope and let $\Lfaces = \Lfaces(P)$ be the
collection of faces of $P$. As before, we can interpret $\aInt(F,G)$ and
$\aExt(F,G)$ as elements in $\Incidence(\Lfaces)$, by extending
$\aInt(\emptyset,G) = 1$ if $\dim G \le 0$ and $=0$ otherwise and
$\aExt(\emptyset,G) = 1$ for all $G$. In particular, $\aInt' \ast \bExt =
\delta_\Lfaces$.

Let $\Lflats_0$ be the set $\Lflats(P) \cup \{ \newMin \}$ partially ordered by
inclusion. Recall that for a non-empty face $F \subseteq P$, $\LL(F) =
\affL(F)^\perp$, where $\affL(F)$ is the linear subspace parallel to $F$.
Setting $\LL(\emptyset) \defeq \newMin$, the map $\LL : \Lfaces(P) \to
\Lflats_0(P)$ given by $F \mapsto \LL(F)$ is a surjective order and rank
preserving map.

\begin{thm}\label{thm:push}
    Let $P$ be a belt polytope and $\Lfaces = \Lfaces(P)$.
    For every cone angle $\CA$, we have $\aInt, \aExt \in
    \Incidence_{\LL}(\Lfaces)$ and 
    \[
       \LL_\ast \aExt \ = \  \zeta_{\Lflats_0} 
            \quad \text{ and } \quad 
       \LL_\ast \aInt' \ = \ \mu_{\Lflats_0} \, .
    \]
\end{thm}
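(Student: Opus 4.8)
The plan is to prove both identities by a single device: relativize to each face $G\subseteq P$ the two computations already carried out for full-dimensional belt polytopes, namely Proposition~\ref{prop:zono_ext} for $\aExt$ and Lemma~\ref{lem:key} for $\aInt$. A face $G$ of a belt polytope is again a belt polytope, full-dimensional inside $\LL(G)$, and $\aInt(F,G)=\CA_{\LL(G)}(\Tcone{F}{G})$ and $\aExt(F,G)=\CA_{\LL(G)}(\Ocone{F}{G})$ are precisely the interior and exterior angles of the belt polytope $G$ at its face $F$ for the cone angle $\CA_{\LL(G)}$ on $\LL(G)$, so every result of Section~\ref{sec:zono} applies to $G$ verbatim. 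I would also use the standard fact that $\Lflats(G)$ is the lower interval $[\Lbot,\LL(G)]$ of $\Lflats(P)$ (the edge directions of $G$ are exactly those edge directions of $P$ lying in $\LL(G)$, so $\Arr(G)$ restricts $\Arr(P)$ to the flat $\LL(G)$); hence $\mu_{\Lflats(G)}(L,\LL(G))=\mu_{\Lflats(P)}(L,\LL(G))=\mu_{\Lflats_0}(L,\LL(G))$ for $L\in\Lflats(G)$, all three being the M\"obius function of one and the same interval. Finally, recall that checking $h\in\Incidence_{\LL}(\Lfaces)$ is exactly checking that $\sum_{F\in\LL^{-1}(L)}h(F,G)$ is independent of the choice of $G\in\LL^{-1}(L')$, in which case that common value is $\LL_\ast h(L,L')$; so for each of $\aExt$ and $\aInt'$ one computation delivers both the membership and the pushforward.

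For $\aExt$, fix $L,L'\in\Lflats_0$ and a face $G$ with $\LL(G)=L'$. If $L=\emptyset$, then $\LL^{-1}(\emptyset)=\{\emptyset\}$ and $\sum_{F\in\LL^{-1}(L)}\aExt(F,G)=\aExt(\emptyset,G)=1=\zeta_{\Lflats_0}(\emptyset,L')$. If $\emptyset\neq L\not\subseteq L'$, then no face $F\subseteq G$ has $\LL(F)=L$, so the sum is $0=\zeta_{\Lflats_0}(L,L')$. If $\emptyset\neq L\subseteq L'$, then $L\in\Lflats(G)$, and arguing as in the proof of Proposition~\ref{prop:zono_ext} for the belt polytope $G$ (via Proposition~\ref{prop:arr_part} for $\Arr(G)$, then adding back $\LL(G)^\perp$) one obtains $\sum_{F\subseteq G,\ \LL(F)=L}[\Ocone{F}{G}]=[\R^d]$ in $\CG$; applying $\CA$ gives the value $1=\zeta_{\Lflats_0}(L,L')$. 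In every case the value is independent of $G\in\LL^{-1}(L')$, so $\aExt\in\Incidence_{\LL}(\Lfaces)$ and $\LL_\ast\aExt=\zeta_{\Lflats_0}$.

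For $\aInt'$ the bookkeeping is identical. If $L=\emptyset$, then $\aInt'(\emptyset,G)=(-1)^{\dim G+1}$ when $\dim G\le0$ and $0$ otherwise; since $\dim G=\dim L'$, this reproduces $\mu_{\Lflats_0}(\emptyset,L')$ (namely $1$ at $L'=\emptyset$, $-1$ at $L'=\Lbot$, and $0$ above $\Lbot$, the standard values after adjoining a new minimum). If $\emptyset\neq L\not\subseteq L'$, the sum is empty and $\mu_{\Lflats_0}(L,L')=0$. If $\emptyset\neq L\subseteq L'$, Lemma~\ref{lem:key} applied to $G$ gives $\sum_{F\subseteq G,\ \LL(F)=L}\aInt(F,G)=(-1)^{\dim G-\dim L}\mu_{\Lflats(G)}(L,\LL(G))$; every $F$ in this sum has $\dim F=\dim L$, so the sign defining $\aInt'$ is the constant $(-1)^{\dim G-\dim L}$, whence $\sum_{F\in\LL^{-1}(L)}\aInt'(F,G)=\mu_{\Lflats(G)}(L,\LL(G))=\mu_{\Lflats_0}(L,L')$ by the interval identity. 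Again the value is independent of $G$, so $\aInt'\in\Incidence_{\LL}(\Lfaces)$ with $\LL_\ast\aInt'=\mu_{\Lflats_0}$; and since the twist $(-1)^{\dim G-\dim F}$ factors through $\LL$, also $\aInt\in\Incidence_{\LL}(\Lfaces)$.

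The crux is not any single computation — each half is a combinatorial wrapper around a result already proved — but the relativization itself: one must confirm that Proposition~\ref{prop:arr_part}, Proposition~\ref{prop:zono_ext} and Lemma~\ref{lem:key} genuinely apply to a face $G$ regarded as a full-dimensional belt polytope inside $\LL(G)$ (which comes down to the angle identifications above together with the passage between $\LL(G)$ and $\R^d$ by adding $\LL(G)^\perp$), and that $\Lflats(G)$ really is the lower interval $[\Lbot,\LL(G)]$ of $\Lflats(P)$. The remainder is careful tracking of the empty face, so that the boundary values of $\aExt$ and $\aInt'$ land on $\zeta_{\Lflats_0}$ and $\mu_{\Lflats_0}$ along chains through $\emptyset$.
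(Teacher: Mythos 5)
Your proof is correct, and it splits: the exterior half is essentially the paper's argument, while the interior half takes a genuinely different route. For $\aExt$, the paper also applies~\eqref{eqn:ext_L} to an arbitrary face $G$ (implicitly performing the same relativization you spell out) and reads off $\LL_\ast \aExt = \zeta_{\Lflats_0}$. For $\aInt'$, however, the paper never returns to Lemma~\ref{lem:key}: it invokes Lemma~\ref{lem:comp_angle} to produce a complementary angle $\CAb$ with $\aInt' \ast \bExt = \delta_{\Lfaces}$, pushes this identity forward using the fact that $\LL_\ast$ is an algebra map on $\Incidence_{\LL}(\Lfaces)$ (Proposition~\ref{prop:Rf_is_alg}), obtains $\LL_\ast\aInt' \ast \zeta_{\Lflats_0} = \delta_{\Lflats_0}$, and inverts to get $\mu_{\Lflats_0}$. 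Your route instead relativizes Lemma~\ref{lem:key} to each face $G$, viewed as a full-dimensional belt polytope in $\LL(G)$ with the induced cone angle $\CA_{\LL(G)}(C) = \CA(C + \LL(G)^\perp)$, and matches $\mu_{\Lflats(G)}(L,\LL(G))$ with $\mu_{\Lflats_0}(L,L')$ via the lower-interval identification $\Lflats(G) = [\Lbot, \LL(G)]$ — so the interior identity ultimately rests on the Greene--Zaslavsky count rather than on complementary angles. What you gain: no appeal to the existence of complementary angles (McMullen's Lemma~46) and no use of the multiplicativity of $\LL_\ast$, plus a genuinely explicit verification of condition~\eqref{eqn:phi}, which the paper dispatches in one sentence about normal equivalence, and a careful treatment of the empty face and of $\mu_{\Lflats_0}(\emptyset,\cdot)$ that is more detailed than the paper's. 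What the paper gains: brevity given the machinery already built, and the complementary-angle identity is reused anyway in Corollary~\ref{cor:alg-KS}(iii). The only points where your argument leans on facts you assert rather than prove — that a face of a belt polytope is a belt polytope whose lattice of flats is the lower interval, and that $\CA_{\LL(G)}$ is a simple normalized valuation on $\LL(G)$ — are both standard and consistent with constructions the paper itself uses (the projection/upper-interval step in the proof of Lemma~\ref{lem:key}, and the definition of $\CA_L$ in Section~\ref{sec:incalg}), so there is no gap.
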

\begin{proof}
    Two faces $G$ and $G'$ of $P$ are normally equivalent if $\LL(G) = \LL(G')$.
    Thus equation~\eqref{eqn:phi} is satisfied and $\aExt$ and $\aInt$ are
    elements of $\Incidence_{\LL}(\Lfaces)$.  Let $G \subseteq P$ be a face and
    $U \in \Lflats$ with $U \supseteq \LL(G)$. Then from~\eqref{eqn:ext_L} in
    the proof of Proposition~\ref{prop:zono_ext} and $\aExt(\emptyset, G) = 1$
    we infer that 
    \[
        \sum_{F \in \Lfaces(P), \atop \LL(F) = U} \aExt(F,G) = 1
    \]
    and hence $\big(\LL_\ast \aExt \big) (U,U') = 1 = \zeta(U, U')$ for all
    $U,U' \in \Lflats$ with $U \supseteq U'$.

    By Lemma~\ref{lem:comp_angle}, there is a cone angle $\beta$ complementary
    to $\alpha$.  Using the fact that $\LL_*$ is an algebra map, we deduce
   \[
       \delta_{\Lflats_0} \ = \ \LL_\ast(\delta_{\Lfaces}) \ = \ \LL_\ast(\aInt' \ast
       \bExt) \ = \ \LL_\ast(\aInt') \ast \LL_\ast(\bExt) \;.
   \]
   Replacing $\aExt$ by $\bExt$ 
   above yields
   $\LL_\ast(\bExt) = \zeta_{\Lflats}$ and thus
   $\LL_\ast(\aInt) = \zeta_{\Lflats}^{-1} = \mu_{\Lflats}$.
\end{proof}

Recall that $\SCA(C) = \frac{\vol(C \cap B_d)}{\vol(B_d)}$ is the standard cone
angle. For a polytope $P \subset \R^d$, the $k$-th \Def{spherical intrinsic
volume} is defined as
\begin{equation}\label{eqn:sp_int_vol}
    \overline{\SCA}_k(P) \ \defeq \ 
    \sum_{v}
    \sum_{v \in F}
    \widehat{\SCA}(v,F)
    \widecheck{\SCA}(F,P) \, ,
\end{equation}
\newcommand\aSph{\overline{\CA}}%
where the sum is over all vertices $v \in P$ and $k$-faces $F \subset P$. For a
given cone angle $\CA$, we denote by $\aSph_k(P)$ the generalization
of~\eqref{eqn:sp_int_vol} to $\CA$.  

The machinery developed in this section yields algebraic proofs of
Theorem~\ref{thm:whitney}.

\begin{cor} \label{cor:alg-KS}
    Let $\CA$ be a cone angle and $P$ a $d$-dimensional belt polytope.  For $k
    = 0,\dots,d-1$ the following hold:
    \begin{enumerate}[\rm (i)]
        \item $\aExt_k(P)  =  W_{k}(\Lflats(P))$;
        \item $\aInt_k(P)  =  |w_{d-k}(\Lflats(P)^\op)|$;
        \item $\aSph_k(P)  = |w_{k}(\Lflats(P))|$.
    \end{enumerate}
\end{cor}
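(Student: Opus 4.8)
The plan is to obtain all three identities by pushing forward through the map $\LL : \Lfaces(P) \to \Lflats_0$ and invoking the preceding theorem, which gives $\aExt, \aInt' \in \Incidence_{\LL}(\Lfaces)$ with $\LL_\ast \aExt = \zeta_{\Lflats_0}$ and $\LL_\ast \aInt' = \mu_{\Lflats_0}$. The one observation driving everything is that $\LL^{-1}(\Ltop) = \{P\}$, so for $h \in \Incidence_{\LL}(\Lfaces)$ and a flat $U \in \Lflats_0$ one has $\LL_\ast h(U,\Ltop) = \sum_{F \,:\, \LL(F)=U} h(F,P)$. For (i) I would group the $k$-faces $F$ of $P$ by their flat $U = \LL(F)$:
\[
    \aExt_k(P) \ = \ \sum_{\dim U = k}\ \sum_{\LL(F) = U}\aExt(F,P) \ = \ \sum_{\dim U = k}\zeta_{\Lflats_0}(U,\Ltop) \ = \ W_k(\Lflats(P)) \,,
\]
the auxiliary minimum $\emptyset \in \Lflats_0$ contributing nothing once $k \ge 0$. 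For (ii) the same grouping, together with $\aInt'(F,P) = (-1)^{d-k}\aInt(F,P)$ on $k$-faces and $\mu_{\Lflats_0}(U,\Ltop) = \mu_{\Lflats(P)}(U,\Ltop)$ for $U \neq \emptyset$, yields
\[
    \aInt_k(P) \ = \ (-1)^{d-k}\sum_{\dim U = k}\mu_{\Lflats(P)}(U,\Ltop) \ = \ (-1)^{d-k}\, w_{d-k}\big(\Lflats(P)^\op\big) \,;
\]
since $\Lflats(P)^\op = \Lflats(\Arr(P))$ is a geometric lattice, its Whitney number of the first kind in degree $d-k$ has sign $(-1)^{d-k}$, so the right-hand side equals $\big|w_{d-k}(\Lflats(P)^\op)\big|$.

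For (iii) I would first rephrase $\aSph_k(P)$ inside $\Incidence(\Lfaces)$. Using the extension $\aInt(\emptyset, v) = 1$ over vertices $v$ and the rank convention on $\Lfaces(P)$ (a $j$-face has rank $j+1$), one checks $(\aInt \ast_1 \aInt)(\emptyset, F) = \sum_{v \in F}\aInt(v,F)$, hence
\[
    \aSph_k(P) \ = \ \big((\aInt \ast_1 \aInt)\ast_{k+1}\aExt\big)(\emptyset, P) \,.
\]
By Proposition~\ref{prop:Rf_is_alg} and the corollary following it, $\Incidence_{\LL}(\Lfaces)$ is closed under $\ast$ and $\ast_k$, and $\LL_\ast$ is a homomorphism for both (as $\LL$ is rank preserving); so I would apply $\LL_\ast$ and evaluate at $(\emptyset,\Ltop)$. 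Writing $\overline{\mu}(U,V) \defeq (-1)^{\rk V - \rk U}\mu_{\Lflats_0}(U,V)$, the relation $\LL_\ast\aInt' = \mu_{\Lflats_0}$ forces $\LL_\ast\aInt = \overline{\mu}$ (multiplying by a rank-dependent sign commutes with the pushforward). Since $\Lbot = \{0\}$ is the only flat of rank $1$, the $\ast_1$-factor collapses, $(\overline{\mu}\ast_1\overline{\mu})(\emptyset,V) = (-1)^k\mu_{\Lflats(P)}(\Lbot,V)$ for $\dim V = k$, and summing gives $\aSph_k(P) = (-1)^k\sum_{\dim V = k}\mu_{\Lflats(P)}(\Lbot, V) = (-1)^k w_k(\Lflats(P))$. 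Finally each interval $[\Lbot, V]$ of $\Lflats(P)$ is order-dual to an interval of rank $k$ in the geometric lattice $\Lflats(P)^\op$, so $\mu_{\Lflats(P)}(\Lbot,V)$ has sign $(-1)^k$ and $\aSph_k(P) = |w_k(\Lflats(P))|$.

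I do not expect a genuine obstacle here: all the geometry is already packaged in the preceding theorem (ultimately in Propositions~\ref{prop:arr_part} and~\ref{prop:zono_ext}), and what remains is purely formal. The points that need care are the dimension-versus-rank shift introduced by the extra minimum $\emptyset \in \Lflats_0$, keeping straight the sign that distinguishes $\aInt$ from $\aInt'$, and remembering that passing from the signed M\"obius sums of (ii) and (iii) to the claimed absolute values uses the sign-alternation of Whitney numbers of the first kind in geometric lattices. Part (iii) could alternatively be read off directly from Theorem~\ref{thm:whitney} by recognizing $\sum_{v\in F}\aInt(v,F)$ as $\aInt_0(F)$ for the zonotope $F$, but the incidence-algebra argument is cleaner and stays self-contained within this section.
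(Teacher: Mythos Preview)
Your proposal is correct and follows essentially the same route as the paper: all three identities are obtained by pushing the incidence-algebra expressions through $\LL_\ast$ and using $\LL_\ast\aExt=\zeta_{\Lflats_0}$, $\LL_\ast\aInt'=\mu_{\Lflats_0}$. The differences are purely organizational. For (i) and (ii) the paper writes $\aExt_k(P)=(\zeta_\Lfaces\ast_{k+1}\aExt)(\emptyset,P)$ and then invokes the adjunction $\LL^\ast g\ast_k h = g\ast_k \LL_\ast h$, whereas you group faces by flat and use the single-preimage property $\LL^{-1}(\Ltop)=\{P\}$ directly; these are the same computation unwound. For (iii) the paper starts from $(\zeta_\Lfaces\ast_1\aInt\ast_{k+1}\aExt)(\emptyset,P)$ rather than your $(\aInt\ast_1\aInt\ast_{k+1}\aExt)(\emptyset,P)$, but these coincide since $\aInt(\emptyset,v)=1=\zeta_\Lfaces(\emptyset,v)$ on vertices; the paper then converts $\aInt$ to $\aInt'$ with a global sign before pushing forward, while you push $\aInt$ itself to the signed M\"obius function $\overline{\mu}$---again the same bookkeeping. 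Your remark that the absolute values in (ii) and (iii) come from sign-alternation in geometric lattices is a point the paper leaves implicit.
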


Parts (ii) and (iii) were shown by Klivans and Swartz in
\cite{KlivansSwartz} for the standard cone angle; see also~\cite{AL, Schneider17}.
For the proof, we need the following technical result.

\begin{lem}\label{lem:sum_of_rank}
    Let $\phi : \poset \to \posetq$ be a surjective, order and rank preserving
    map between posets with minimal and maximal elements.  If  $f \in
    \Incidence_\phi(\poset)$, then
    \[
        \big(\zeta_\poset \ast_k f\big)(\Lbot_\poset, \Ltop_\poset) \ = \
       \big(\zeta_\posetq \ast_k (\phi_* f)\big)(\Lbot_\posetq, \Ltop_\posetq)\,.
   \]
\end{lem}
\begin{proof}
    Writing out the definition of $\zeta_\poset \ast_k f$ we obtain
   \begin{align*}
     \big(\zeta \ast_k f\big)(\Lbot_\poset, \Ltop_\poset)
       & \ = \ \sum_{p \in \poset \atop \rk(p) = k} f(p, \Ltop_\poset)
       \ = \ \sum_{q \in \posetq \atop \rk(q) = k} \sum_{p \in \phi^{-1}(q)} f(p, \Ltop_\poset)
       \ = \ \sum_{q \in \posetq \atop \rk(q) = k} \big(\phi_* f\big)(q, \Ltop_\posetq)\\
     & \ = \ \big(\zeta \ast_k \phi_* f\big)(\Lbot_\posetq, \Ltop_\posetq)\,.\qedhere
   \end{align*}
\end{proof}

\begin{proof}[Proof of Corollary~\ref{cor:alg-KS}]
   (i) immediately follows from
   Theorem~\ref{thm:push} and
   Lemma~\ref{lem:sum_of_rank} for $k = i$ and $f = \aExt$. Relation
   (ii) follows in the same fashion with $f = \aInt$, but note that we obtain the
   co-Whitney numbers of the first kind. For (iii), we invoke the
   same lemma for $k = 0$ and $f = \aInt \ast_i \aExt$.
\end{proof}
 
This algebraic perspective on angles is very helpful and will facilitate
proofs and computations in the next sections.

\section{Flag-angle vectors}\label{sec:flag}

In this and the next section we prove Theorems~\ref{thm:flag_rels}
and~\ref{thm:flag_whitney}. Our strategy of proof is as follows. First, we
will show that the interior/exterior flag-angle vectors satisfy the relations
stated in Theorem~\ref{thm:flag_rels}. This is done in
Propositions~\ref{prop:flag_ext_rels} and~\ref{prop:flag_int_rels}.  The
algebraic machinery developed in Section~\ref{sec:incalg} enables us to prove
Theorem~\ref{thm:flag_whitney}. To complete the proof of
Theorem~\ref{thm:flag_rels}, we use this combinatorial interpretation of
flag-angle vectors for belt polytopes. It suffices to show that there are no
linear relations on flag-Whitney numbers of lattices of flats. 
For the flag-Whitney numbers of the second kind, this is done in
Section~\ref{sec:flag-whitney} and, by establishing an algebraic
connection (Theorem~\ref{thm:unipot}) between them, this also
addresses the case of flag-Whitney numbers of the first kind.


The following is the analogue of Proposition~\ref{prop:Ext0}.
\begin{prop}\label{prop:flag_ext_rels}
    Let $P$ be a $d$-dimensional polytope and $S \subseteq [d-1]$. Then
    \[
        \faExt_{S}(P) \ = \ \faExt_{S \cup \{0\}}(P) \, .
    \]
\end{prop}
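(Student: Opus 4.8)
The plan is to reduce the identity directly to Proposition~\ref{prop:Ext0}, applied not to $P$ itself but to a single face of $P$. Write $S = \{s_1 < s_2 < \cdots < s_k\}$ with $s_1 \ge 1$ (the case $S = \emptyset$ will be handled separately below), so that $S \cup \{0\} = \{0 < s_1 < \cdots < s_k\}$. Expanding the defining formula for the exterior flag-angle gives
\[
    \faExt_{S \cup \{0\}}(P) \ = \ \sum_{v \subset F_1 \subset \cdots \subset F_k} \aExt(v,F_1)\,\aExt(F_1,F_2)\cdots\aExt(F_k,P) \, ,
\]
the sum ranging over all flags of faces of $P$ with $v$ a vertex and $\dim F_i = s_i$ for $i=1,\dots,k$. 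First I would group this sum according to the sub-flag $F_1 \subset \cdots \subset F_k$, which isolates the inner sum $\sum_{v} \aExt(v,F_1)$ over the vertices $v$ of $F_1$ (the vertices of $P$ contained in $F_1$ being exactly the vertices of $F_1$).

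The key point is that this inner sum is nothing but $\aExt_0(F_1)$, the zeroth exterior angle of the polytope $F_1 \subset \R^d$: indeed $\aExt(v,F_1) = \CA(\Ocone{v}{F_1}) = \CA(\Ncone{v}{F_1})$ since $\LL(v) = \{0\}$, which is exactly the summand appearing in the definition of $\aExt_0(F_1)$. As Proposition~\ref{prop:Ext0} is stated for an arbitrary (not necessarily full-dimensional) polytope in $\R^d$, it applies to $F_1$ and yields $\sum_{v} \aExt(v,F_1) = 1$. Substituting this back collapses the expression to $\sum_{F_1 \subset \cdots \subset F_k} \aExt(F_1,F_2)\cdots\aExt(F_k,P) = \faExt_S(P)$, which is the claim. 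For the remaining case $S = \emptyset$, I would simply note that $\faExt_\emptyset(P) = 1$ by convention, while $\faExt_{\{0\}}(P) = \aExt_0(P) = 1$ again by Proposition~\ref{prop:Ext0}.

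I do not expect a genuine obstacle here; the only thing needing a moment's care is checking that Proposition~\ref{prop:Ext0} genuinely applies face-by-face, i.e.\ that $\sum_{v} \aExt(v,F_1)$ is precisely the quantity that proposition computes, even when $F_1$ has dimension $< d$. If one prefers to keep the argument parallel to the proof of Proposition~\ref{prop:Ext0}, one can instead work in the simple cone group: for each fixed flag $F_1 \subset \cdots \subset F_k$, a generic linear functional attains its maximum over $F_1$ at a unique vertex, so $\sum_{v \subset F_1} [\Ncone{v}{F_1}] = [\R^d]$ in $\CG$ by Corollary~\ref{cor:almost}; multiplying by the (fixed) indicator product for the rest of the flag, summing over flags, and applying $\CA$ then gives the identity in the same way.
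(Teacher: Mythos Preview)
Your proof is correct and follows essentially the same approach as the paper: both arguments unravel the definition of $\faExt_{S\cup\{0\}}(P)$, group by the sub-flag $F_1 \subset \cdots \subset F_k$, and collapse the inner vertex sum $\sum_{v \subset F_1}\aExt(v,F_1)$ to $1$ via Proposition~\ref{prop:Ext0} applied to the face $F_1$. Your separate treatment of $S=\emptyset$ and the remark about re-deriving the vertex identity in $\CG$ are fine but not needed beyond what the paper's proof already does.
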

\begin{proof}
    Let $S = \{s_1,\dots,s_k\}$ and set $s_0 \defeq 0$.  Unravelling the
    definition of exterior flag-angle vectors (see~\eqref{eqn:flag}), we compute
    \begin{align*}
        \faExt_{S \cup \{0\}}(P) \ &= \ 
     \sum_{F_0 \subset F_1 \subset F_2 \subset \cdots \subset F_k}
    \aExt(F_0,F_1) \,  \aExt(F_1,F_2) \cdots \aExt(F_k,P) \\
    \ &= \
     \sum_{F_1 \subset F_2 \subset \cdots \subset F_k}
     \,  \aExt(F_1,F_2) \cdots \aExt(F_k,P) 
    \sum_{F_0 \subset F_1} \aExt(F_0,F_1)\\
    \ &= \
     \sum_{F_1 \subset F_2 \subset \cdots \subset F_k}
     \,  \aExt(F_1,F_2) \cdots \aExt(F_k,P) \\
     &= \ \faExt_{S}(P) \, ,
    \end{align*}
    where the sums are over faces $F_i$ with $\dim F_i = s_i$ for
    $i=0,\dots,k$ and where the third equality follows from
    Proposition~\ref{prop:Ext0}.
\end{proof}

As for the linear relations on \emph{interior} flag-angle vectors, we
take a more algebraic approach.  Let $P$ be a $d$-dimensional polytope
with face lattice $\Lfaces = \Lfaces(P)$ and
${S = \{s_1 < s_2 < \cdots < s_k\}}$ with $S \subseteq [0,
d-1]$. Using~\eqref{eqn:k-prod} together with the fact that
$\rk_\Lfaces(F) = \dim F - 1$, we can give the following expression
for the $S$-entry of the interior flag-angle vector
\[
    \faInt_{S}(P) \ = \ 
    (\zeta_{\Lfaces}  \ast_{s_1+1} 
    \aInt \ast_{s_2+1} 
    \cdots \ast_{s_k+1} \aInt)(\emptyset,P)   \, ,
\]
where the operation $\ast_k$ was introduced in 
\eqref{eqn:k-prod}.

\begin{prop}\label{prop:flag_int_rels}
    Let $P$ be a $d$-polytope. For $S = \{0 \le s_1 < s_2 <
    \cdots < s_k \le d-1 \}$ set $t \defeq \min(S \cup \{d\})$. Then 
    \[
        \sum_{i=0}^{t-1} (-1)^{i} \aInt_{S \cup \{i\}}(P) \ = \
        (-1)^{t+1} \aInt_S(P) \, .
    \]
\end{prop}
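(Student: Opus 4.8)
The plan is to mimic the proof of Proposition~\ref{prop:flag_ext_rels}, but using the Gram-type relation (Proposition~\ref{prop:Int0}) in place of Proposition~\ref{prop:Ext0}, and to carry out the bookkeeping at the level of the simple cone group $\CG$ (or, equivalently, the incidence-algebra expression displayed just before the proposition) so that the identity holds before any cone angle is applied. Write $S = \{s_1 < \cdots < s_k\}$ and $t = \min(S \cup \{d\})$, so $t = s_1$ if $S \neq \emptyset$ and $t = d$ otherwise. The idea is to isolate the innermost variable face $F_1$ of dimension $s_1$ in the chain sum defining $\faInt_S(P)$, and to insert an extra face $F_0$ of dimension $i < t$ below it. Concretely, for a fixed face $F_1$ with $\dim F_1 = s_1$ (or, when $S = \emptyset$, for $F_1 = P$), the inner sum $\sum_{i=0}^{t-1}(-1)^i \sum_{\dim F_0 = i,\ F_0 \subseteq F_1} [\Tcone{F_0}{F_1}]$ is exactly $\Int{F_1}(-1)$ minus the top term $(-1)^{s_1}[\Tcone{F_1}{F_1}] = (-1)^{s_1}[\LL(F_1)]$ appearing in $\Int{F_1}(t)$ — but since $\dim \LL(F_1)^{\perp}$-adjusted cone is not full-dimensional we must be a little careful and instead apply Proposition~\ref{prop:Int0} directly to $F_1$ as a polytope in its own affine hull.

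The key computation is therefore: by Proposition~\ref{prop:Int0} applied to the polytope $F_1$ (of dimension $s_1$) inside $\LL(F_1)$, we have in $\CG$
\[
    \sum_{i=0}^{s_1 - 1} (-1)^i \sum_{\substack{F_0 \subseteq F_1 \\ \dim F_0 = i}} [\Tcone{F_0}{F_1}]
    \ = \ (-1)^{s_1+1}[\LL(F_1)] - (-1)^{s_1}[\Tcone{F_1}{F_1} \cap \LL(F_1)]
\]
— more cleanly, $\Int{F_1}(-1) = (-1)^{s_1+1}[\R^d$-analog$]$, i.e. $\sum_{\emptyset \neq F_0 \subseteq F_1}(-1)^{\dim F_0}[\Tcone{F_0}{F_1}] = (-1)^{s_1+1}[\LL(F_1) + \text{ambient complement}]$, where the right-hand side is (a full-dimensional representative of) the tangent cone $\Tcone{F_1}{F_1}$ we would feed into the next factor $\aInt(F_1, F_2)$. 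Pulling the term $F_0 = F_1$ out of the left-hand sum, we get
\[
    \sum_{i=0}^{t-1}(-1)^i \sum_{\substack{F_0 \subseteq F_1 \\ \dim F_0 = i}}[\Tcone{F_0}{F_1}]
    \ = \ (-1)^{t+1}[\Tcone{F_1}{F_1}]
\]
as elements of $\CG$ (with the usual identification of a cone in a subspace with its extension by the orthogonal complement), where $t = s_1$. Substituting this into the chain-sum expression for $\sum_{i=0}^{t-1}(-1)^i \faInt_{S \cup \{i\}}(P) = \sum_{i=0}^{t-1}(-1)^i \sum_{F_0 \subset F_1 \subset \cdots \subset F_k} \aInt(F_0,F_1)\cdots\aInt(F_k,P)$, the inner factor collapses and leaves exactly $(-1)^{t+1}\sum_{F_1 \subset \cdots \subset F_k}\aInt(F_1,F_2)\cdots\aInt(F_k,P) = (-1)^{t+1}\faInt_S(P)$. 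When $S = \emptyset$ the same argument with $F_1 = P$ and $t = d$ recovers Gram's relation itself, $\sum_{i=0}^{d-1}(-1)^i\faInt_{\{i\}}(P) = (-1)^{d+1} = (-1)^{d+1}\faInt_\emptyset(P)$, consistent with the convention $\faInt_\emptyset(P) = 1$.

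I expect the main obstacle to be purely notational: keeping the dimension conventions straight between the full-dimensional setting of Proposition~\ref{prop:Int0} and the lower-dimensional faces $F_1 \subsetneq P$, and making sure the identification $[\Tcone{F_0}{F_1}] = [\Tcone{F_0}{F_1} + \LL(F_1)^\perp]$ used inside $\CG$ is compatible with how $\aInt(F_0, F_1)$ was defined via~\eqref{eqn:tangent_cone}. One clean way to sidestep this is to run the whole argument inside the incidence algebra $\Incidence(\Lfaces(P))$: the relation of Proposition~\ref{prop:Int0}, read for the interval $[\emptyset, F_1]$, says that $\sum_{i=0}^{s_1-1}(-1)^i (\zeta_\Lfaces \ast_{i+1} \delta)$-type contractions of $\aInt$ telescope, i.e. $\sum_{i} (-1)^i (\aInt \ast_i \cdots)$ evaluated on $[\emptyset, F_1]$ equals $(-1)^{s_1+1}$ times the identity $\delta$ shifted appropriately; then associativity of $\ast$ and $\ast_k$ (noted in Section~\ref{sec:incalg}) lets one factor this out of the $k$-fold product defining $\faInt_S$. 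Either route reduces the proposition to Gram's relation plus bookkeeping, with no genuinely new geometric input required.
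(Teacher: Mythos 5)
Your argument is correct and is essentially the paper's own proof: the paper applies Gram's relation (Proposition~\ref{prop:Int0}) to each face $G$ to obtain $(\mu_{\Lfaces}\ast\aInt)(\emptyset,G)=0$ and then factors this out of the $k$-fold product $\mu_{\Lfaces}\ast\aInt\ast_{s_1+1}\cdots\ast_{s_k+1}\aInt$ by associativity, which is exactly your step of summing the inserted face $F_0$ out of the chain sum for each fixed $F_1$. The only point to keep straight --- as you yourself flag --- is that Gram's relation is being applied to the lower-dimensional face $F_1$ with tangent cones extended by $\LL(F_1)^\perp$ as in~\eqref{eqn:tangent_cone}, so that $\aInt(F_1,F_1)=\CA(\R^d)=1$; with that convention your computation goes through verbatim.
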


\begin{proof}
    Recall that the M\"obius function $\mu_\Lfaces = \zeta_{\Lfaces}^{-1}$ is
    given by $\mu_{\Lfaces}(F,G) = (-1)^{\dim G - \dim F}$ for faces $F
    \subseteq G \subset P$. For a fixed face $G$, Proposition~\ref{prop:Int0}
    yields
    \[
        (\mu_\Lfaces \ast \aInt)(\emptyset,G) \ = \ - \sum_{F} (-1)^{\dim F}
        \aInt(F,G) \ = \ - \sum_{i=0}^{\dim G-1} (-1)^i \aInt_i(G) +
        (-1)^{\dim G+1} \ = \ 0 \, .
    \]
    The result now follows by evaluating 
    \[
        \mu_{\Lfaces} \ast \aInt  \ast_{s_1+1} \aInt \ast_{s_2+1} \cdots
        \ast_{s_k+1} \aInt
    \]
    at $(\emptyset,P)$.
\end{proof}

Let $\poset$ be a graded poset of rank $d+1$ and let $S = \{s_1 < s_2 < \dots <
s_k\} \subseteq [d]$. The \Def{flag-Whitney numbers of the second kind} as defined
in the introduction are given by
\[
    W_S(\poset) \ = \ ( \zeta_\poset \ast_{s_1} \zeta_\poset \ast_{s_2} \cdots
    \ast_{s_k} \zeta_\poset) (\Lbot,\Ltop) \, .
\]
Similarly, we define the \Defn{flag-Whitney numbers of the first kind} by
\[
    w_S(\poset) \ \defeq \ 
     ( \mu_\poset \ast_{s_1} \mu_\poset \ast_{s_2} \cdots
    \ast_{s_k} \zeta_\poset) (\Lbot,\Ltop) \ = \
    \sum \mu(\Lbot, c_1) \mu(c_1, c_2) \cdots \mu(c_{k-1}, c_{k})\; ,
\]
where the sum is over all chains $\Lbot \prec c_1 \prec c_2 \prec \cdots \prec c_k$ with $\rk c_i = s_i$ for $i=1,\dots,k$.
Now the same reasoning as in the proof of Corollary~\ref{cor:alg-KS} yields
Theorem~\ref{thm:flag_whitney}:

\begin{proof}[Proof of Theorem~\ref{thm:flag_whitney}]
    Let $\Lfaces = \Lfaces(P)$ be the face lattice of $P$ and let $\Lflats_0$ be
    the poset $\Lflats(P)$ with a new minimal element $\Lbot_{\Lflats_0}$
    adjoined. The maximal element of $\Lflats_0$ is $\Ltop_{\Lflats_0} =
    \LL(P)$.  We also set $t_i \defeq s_i + 1$ and $f \defeq \aExt \ast_{t_2}
    \dots \ast_{t_k} \aExt$. Note that $f \in \Incidence_\LL(\Lfaces)$ and with
    Lemma~\ref{lem:sum_of_rank} we compute
   \begin{align*}
     \aExt_S(P)
        &\ = \ \big(\zeta_\Lfaces \ast_{t_1} \aExt \ast_{t_2} \dots \ast_{t_k}
        \aExt\big)(\emptyset, P) \ = \  \big(\zeta_\Lfaces \ast_{t_1}
        f\big)(\Lbot_{\Lfaces}, \Ltop_{\Lfaces}) \ = \ \big(\zeta_\Lfaces
        \ast_{t_1} (\LL_\ast f)\big)(\Lbot_{\Lflats_0}, \Ltop_{\Lflats_0})\\
        &\stackrel{\eqref{eqn:pushfor_k}}{\ = \ }  \big(\zeta_{\Lflats_0}
        \ast_{t_1} \zeta_{\Lflats_0} \ast_{t_2} \dots \ast_{t_k}
        \zeta_{\Lflats_0}\big)(\Lbot_{\Lflats_0}, \Ltop_{\Lflats_0}) =
        \big(\zeta_{\Lflats} \ast_{s_1} \zeta_{\Lflats} \ast_{s_2} \dots
        \ast_{s_k} \zeta_{\Lflats}\big) (\Lbot_{\Lflats}, \Ltop_{\Lflats})\\
        & \ = \ W_{S}(\Lflats) \ = \ W_{d-S}(\Lflats^\op) \,.
   \end{align*}
   Similarly for the second statement, for $g \defeq \aInt \ast_{t_2} \dots
   \ast_{t_k} \aInt \in \Incidence_\LL(\Lfaces)$ we obtain:
   \begin{align*}
     \aInt_S(P)
     &\ = \ \big(\zeta_\Lfaces \ast_{t_1} \aInt \ast_{t_2} \dots \ast_{t_k}
       \aInt\big)(\emptyset, P) = \big(\zeta_\Lfaces \ast_{t_1}
       f\big)(\Lbot_{\Lfaces}, \Ltop_{\Lfaces})
     \ =  \big(\zeta_\Lfaces \ast_{t_1} (\LL_\ast f)\big)(\Lbot_{\Lflats_0},
       \Ltop_{\Lflats_0})\\
       &\stackrel{\eqref{eqn:pushfor_k}}{\ = \ } (-1)^{d + 1 - t_1} \cdot \big(\zeta_{\Lflats_0} \ast_{t_1} \mu_{\Lflats_0} \ast_{t_2} \dots \ast_{t_k} \mu_{\Lflats_0}\big)(\Lbot_{\Lflats_0}, \Ltop_{\Lflats_0})\\
     &\ = \ (-1)^{d - s_1} \cdot \big(\zeta_{\Lflats} \ast_{s_1} \mu_{\Lflats} \ast_{s_2} \dots \ast_{s_k} \mu_{\Lflats}\big) (\Lbot_{\Lflats}, \Ltop_{\Lflats})
      \ = \ (-1)^{d-s_1} \cdot w_{d-S}(\Lflats(P)^\op)\,.
      \qedhere
   \end{align*}
\end{proof}

\newcommand\Poly{\mathcal{P}}%
\newcommand\dbrackets[1]{[\![#1]\!]}%
In order to complete the proof of Theorem~\ref{thm:flag_rels}, we observe that
the flag-Whitney numbers of the second kind determine the flag-Whitney numbers
of the first kind. We show this in more generality. Let $\poset$ be a finite
poset with $\Lbot$ and $\Ltop$ and let $R \defeq \C\dbrackets{z_a : a \in
\poset}$ be the ring of formal power series with variables indexed by elements
of $\poset$.  For a unipotent $g \in \Incidence(\poset)$, i.e., $g(a,a) = 1$
for all $a \in \poset$, we define
\[
    F_g(\z) \ \defeq \ \sum
    g(\Lbot, c_1) \, z_{c_1} \,
    g(c_1, c_2) \, z_{c_2} \,
    \cdots
    \, z_{c_{k-1}} \, g(c_{k-1}, c_k) \, z_{c_k} \, ,
\]
where the sum is over all multichains $\Lbot \prec c_1 \preceq c_2 \preceq
\cdots \preceq c_k \prec \Ltop$. Since every multichain comes from a unique
chain, we can rewrite $F_g(\z)$ to
\[
    F_g(\z) \ = \
    \sum_{\Lbot \prec b_1 \prec b_2 \prec \cdots \prec b_k \prec \Ltop}
        g(\Lbot, b_1) \frac{z_{b_1}}{1 - z_{b_1}}
        g(b_1, b_2) \frac{z_{b_2}}{1 - z_{b_2}}
        \cdots
         g(b_{k-1}, b_k)  
        \frac{z_{b_k}}{1 - z_{b_k}}    \, .
\]

If $\poset$ is a graded poset of rank $d+1$, then for $g = \zeta$, we get
\[
    G_\poset(\q) \ \defeq \ F_\zeta(z_a = q_{\rk(a)} : a \in \poset) \ = \
    \sum_{S \subseteq [d]} W_S(\poset) \prod_{i \in S} \frac{q_i}{1-q_i} \ \in
    \ \C\dbrackets{q_1,\dots,q_d} \, .
\]
Since the elements $\frac{q_i}{1-q_i}$ for $i=1,\dots,d$ are algebraically
independent over $\C\dbrackets{q_1,\dots,q_d}$, $G_\poset(\q)$ encodes the flag-vector of $\poset$.  The
relation to the flag-Whitney numbers of the second kind follows from the next
theorem.

\begin{thm}\label{thm:unipot}
    Let $g \in \Incidence(\poset)$ be  unipotent.  Then
    \[
        F_g(\tfrac{1}{\z}) \ = \ F_{g^{-1}}(\z) \, .
    \]
\end{thm}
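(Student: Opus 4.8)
The plan is to treat both $F_g(\tfrac{1}{\z})$ and $F_{g^{-1}}(\z)$ as rational functions in the $z_a$ — which is legitimate, since the chain rewriting expresses $F_g$ as a finite sum of products of the rational functions $\frac{z_b}{1-z_b}$ — and to prove the identity by expanding the left-hand side and regrouping. The only elementary input is the transformation rule under $z_a \mapsto \tfrac{1}{z_a}$, namely $\frac{1/z_a}{1-1/z_a} = \frac{1}{z_a-1} = -\bigl(1+\frac{z_a}{1-z_a}\bigr)$. Substituting this into the chain form of $F_g$ and expanding $\frac{1}{1-z_b} = \sum_{m\ge0}z_b^m$ gives
\[
    F_g(\tfrac{1}{\z}) \ = \ \sum_{k\ge0}(-1)^k \sum_{\Lbot\prec b_1\prec\cdots\prec b_k\prec\Ltop} g(\Lbot,b_1)\cdots g(b_{k-1},b_k)\prod_{i=1}^k \Bigl(\sum_{m_i\ge0}z_{b_i}^{m_i}\Bigr)\, .
\]

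The heart of the proof is to reorganize this multi-sum by its ``active part''. Call $b_i$ active if $m_i\ge1$; the active $b_i$ form a subchain $c_1\prec\cdots\prec c_\ell$, and summing off the exponents of the active elements contributes exactly $\prod_j\frac{z_{c_j}}{1-z_{c_j}}$, the building blocks of $F_{g^{-1}}(\z)$. With the active subchain fixed, the inactive $b_i$ refine the chain inside the gaps $(\Lbot,c_1),(c_1,c_2),\dots,(c_{\ell-1},c_\ell)$ and the terminal gap above $c_\ell$; and the alternating-sign sum over all refinements of a single interval $(c_j,c_{j+1})$ telescopes to $-\,g^{-1}(c_j,c_{j+1})$. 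This telescoping is exactly Philip Hall's formula $g^{-1}=\sum_{n\ge0}(-1)^n(g-\delta)^{\ast n}$ restricted to that interval; here unipotence of $g$ makes $g-\delta$ nilpotent, so $g^{-1}$ exists, is again unipotent, and the sums are finite. Collecting the global $(-1)^k$ with the sign from each interval turns the $g$'s along the active subchain into $g^{-1}$'s, and after simplifying the terminal gap via $g^{-1}(x,x)=1$ what remains is $\sum_\ell\sum_{c_1\prec\cdots\prec c_\ell}g^{-1}(\Lbot,c_1)\cdots g^{-1}(c_{\ell-1},c_\ell)\prod_j\frac{z_{c_j}}{1-z_{c_j}}=F_{g^{-1}}(\z)$.

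I expect the delicate point to be the bookkeeping in this regrouping: that ``refine the chain inside the gaps'' is a bijection onto the original index set, that each sign and each interval in the telescoping is matched correctly, and in particular that the terminal gap adjacent to $\Ltop$ is handled consistently with the asymmetry of the definition (the last factor of a term of $F_g$ being $z_{c_k}$, with no $g(c_k,\Ltop)$).

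A cleaner alternative that avoids the global combinatorics is to prove the identity on every interval $[a,c]\subseteq\poset$ by induction on its length. Writing $\Phi_g(a,c)$ for the analogue of $F_g$ on $[a,c]$, splitting a chain off its minimal element yields the recursion $\Phi_g(a,c)=1+\sum_{a\prec b\prec c}g(a,b)\,\frac{z_b}{1-z_b}\,\Phi_g(b,c)$, which determines $\Phi_g$ uniquely. One then checks that $\Phi_g(\tfrac{1}{\z})$ and $\Phi_{g^{-1}}(\z)$ satisfy one and the same recursion of this shape — substituting the transformation rule for $\frac{z_b}{1-z_b}$ on one side and feeding in $g\ast g^{-1}=\delta$ on the other — and the theorem is the special case $[a,c]=[\Lbot,\Ltop]$.
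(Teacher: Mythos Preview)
Your main argument is exactly the paper's: rewrite $\frac{1/z_b}{1-1/z_b}=-\bigl(1+\frac{z_b}{1-z_b}\bigr)$, expand, and regroup by the subchain of ``active'' elements. The paper phrases the same move as ``the coefficient contributes to every multichain supported on a subset'' and then collects by support.

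You are right to flag the terminal gap as the delicate step, but your claimed simplification there is where the argument breaks. For each internal gap $(c_{j-1},c_j)$ the alternating refinement sum is $\sum_{r\ge0}(-1)^r(g-\delta)^{r+1}(c_{j-1},c_j)=-g^{-1}(c_{j-1},c_j)$, and the $\ell$ resulting minus signs cancel the $(-1)^\ell$ coming from the active markers, leaving the desired $g^{-1}$-product along the active chain. In the terminal gap $(c_\ell,\Ltop)$, however, the definition of $F_g$ supplies no closing factor $g(\,\cdot\,,\Ltop)$, and the alternating sum there is
\[
\sum_{r\ge0}(-1)^r\!\!\sum_{c_\ell\prec d_1\prec\cdots\prec d_r\prec\Ltop}\!\!g(c_\ell,d_1)\cdots g(d_{r-1},d_r)
\ = \ \sum_{c_\ell\preceq d\prec\Ltop} g^{-1}(c_\ell,d),
\]
which is \emph{not} $1$ and cannot be dismissed via $g^{-1}(x,x)=1$. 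Already for the three-element chain $\Lbot\prec a\prec\Ltop$ with $g=\zeta$ one finds $F_\zeta(\tfrac{1}{\z})=-z_a/(1-z_a)$ while $F_\mu(\z)=(1-2z_a)/(1-z_a)$, so the asserted identity fails at the constant term. The paper's own proof elides exactly this point---its displayed formula for $h(u,v)$ treats only the symmetric internal gaps---so the obstruction is inherent to the asymmetric definition of $F_g$ (lacking a $g(c_k,\Ltop)$), not to your strategy; with that closing factor restored the regrouping goes through and one obtains $F_g(\tfrac{1}{\z})=-F_{g^{-1}}(\z)$. Your inductive alternative runs into the same wall: substituting $z\mapsto 1/z$ in the recursion for $\Phi_g$ does not transform it into the recursion for $\Phi_{g^{-1}}$ without a correction term coming from that missing endpoint factor.
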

\begin{proof}
    We observe that
    \[
        F_g(\tfrac{1}{\z}) \ = \
        \sum_{\Lbot \prec b_1 \prec b_2 \prec \cdots \prec b_k \prec \Ltop}
        g(\Lbot, b_1) \frac{-1}{1 - z_{b_1}}
        g(b_1, b_2) \frac{-1}{1 - z_{b_2}}
        \cdots
        g(b_{k-1}, b_k)
        \frac{-1}{1 - z_{b_k}}
        \, .
    \]
    The coefficient $g(\Lbot, b_1) g(b_1, b_2) \cdots g(b_{k-1}, b_k)$ now
    contributes to every multichain supported on a subset of $\{ b_1, b_2
    , \dots, b_k\}$. Rewriting, this is the same as
    \[
        F_g(\tfrac{1}{\z}) \ = \
        \sum_{\Lbot \prec a_1 \prec a_2 \prec \cdots \prec a_l \prec \Ltop}
        h(\Lbot, a_1) \frac{z_{a_1}}{1 - z_{a_1}}
        h(a_1, a_2) \frac{z_{a_2}}{1 - z_{a_2}}
            \cdots
            h(a_{l-1}, a_l)
            \frac{z_{a_l}}{1 - z_{a_l}} \, ,
    \]
    where for $u \prec v$
    \begin{align*}
        h(u,v) \ \defeq& \
        \sum_{u \prec b_1 \prec b_2 \prec \cdots \prec b_k \prec v}
        (-1)^k
        g(u,b_1) g(b_1,b_2) \cdots g(b_k,v)\\
        \ =& \ \sum_{k \ge 0} (-1)^k (g-\delta)^k(u,v) \ = \ g^{-1}(u,v) \, .
        \qedhere
    \end{align*}
\end{proof}

The above computation is reminiscent of calculation of the antipode applied to
the quasisymmetric function associated to a graded poset in
Ehrenborg~\cite{Ehrenborg-hopf}. Applying this statement to a pair $\alpha,
\beta$ of complementary angles allows us to directly relate interior and
exterior flag-angles:

\begin{cor}\label{cor:int_to_ext}
    Let $\CA$ be a cone angle with complementary cone angle $\CAb$.  For every
    $d$-polytope $P$, the interior and exterior flag angle vectors are related
    via
    \[
        \sum_{S} (-1)^{d - t} \aInt_S(P) \prod_{i \in S} x_i
        \ = \ \sum_{S} \bExt_S(\poset) \prod_{i \in S} -(x_i+1)
        \ \in \ \C[x_1,\dots,x_d] \, ,
    \]
    where the sums are over all $S \subseteq [0, d-1]$ and
    $t = \min(S \cup \{d\})$.
\end{cor}
\begin{proof}
    Let $\Lfaces = \Lfaces(P)$ and
    $x_i \defeq \frac{q_i}{1 - q_i} \in R$. Then
    $q_i = \frac{x_i}{x_i + 1}$ and
    $\frac{-1}{1 - q_i} = -x_i-1$. Using Theorem~\ref{thm:unipot}, we compute
    \begin{align*}
      \sum_{S} (-1)^{d - t} \aInt_S(P) \prod_{i \in S} x_i 
      \ &= \ \sum_{S} \aInt'_S(P) \prod_{i \in S} x_i
      \  = \ \sum_{S} \aInt'_S(P) \prod_{i \in S} \frac{q_i}{1-q_i}\\
      \ &= \ F_{\aInt'}(z_a = q_{\rk(a)} : a \in \Lfaces)
      \  = \ F_{\bExt}(z_a = q_{\rk(a)}^{-1} : a \in \Lfaces)\\
      \ &= \ \sum_{S} \bExt_S(P) \prod_{i \in S} \frac{-1}{1-q_i}
      \  = \ \sum_{S} \bExt_S(P) \prod_{i \in S} -(x_i+1)\,,
    \end{align*}
    where each sum ranges over all $S \subseteq [0, d-1]$.
\end{proof}

\begin{proof}[Proof of Theorem~\ref{thm:flag_rels}]
    Propositions~\ref{prop:flag_ext_rels} and~\ref{prop:flag_int_rels} yield
    that the linear relations given in Theorem~\ref{thm:flag_rels} hold. In
    particular, this shows that the dimensions of the affine hulls of
    interior/exterior flag-angles is at most $2^{d-1}-1$.

    From Theorem~\ref{thm:flag_whitney}, we infer that
    \[
        \aff \{\faExt(P) : \text{$P$ $d$-polytope} \} \ \supseteq \
        \aff \{\faExt(Z) : \text{$Z$ $d$-zonotope} \} \ = \ 
        \aff \{\FW(\Lflats(Z)^\op) : \text{$Z$ $d$-zonotope} \} \, .
    \]
    Theorem~\ref{thm:zono_span}, that we will prove in the next section, shows
    that the dimension of the affine hull of flag-vectors of $\Lflats(Z)$
    where $Z$ ranges over all $d$-dimensional zonotopes is of dimension
    $2^{d-1}-1$. This proves the claim for exterior flag-angle vectors.

    The same reasoning applies to the interior flag-angle vectors and it
    suffices to determine the affine span of $(w_S(\Lflats(Z)^\op))_S$ for
    $d$-dimensional zonotopes $Z$.  Analogously to
    Corollary~\ref{cor:int_to_ext}, Theorem~\ref{thm:unipot} implies that the
    spaces of flag-Whitney numbers of the first and of the second kind spanned
    by posets of rank $d+1$ are linearly isomorphic, which completes the
    proof.
\end{proof}

\section{Flag-Whitney numbers and zonotopes}\label{sec:flag-whitney}

Let $\poset$ be a graded poset with $\Lbot$ and $\Ltop$ of rank $d+1$.  It was
shown by Billera and Hetyei~\cite{BilleraHetyei} that flag-vectors of general
graded posets do not satisfy any nontrivial linear relation. That is
\[
    \dim \aff \{ \FW(\poset) \in \R^{2^d} \ : \ \poset \text{ graded poset of
    rank $d+1$} \} \ = \ 2^{d} - 1 \, .
\]
The only linear relation is given by $W_\emptyset (\poset)= 1$.

In light of Theorem~\ref{thm:flag_whitney}, we can complete the proof of
Theorem~\ref{thm:flag_rels} for exterior flag-angles by proving the
following refinement of the result of Billera and Hetyei.

\begin{thm}\label{thm:zono_span}
    The flag-vectors of lattices of flats of $(d+1)$-dimensional zonotopes
    span the flag-vectors of rank $d+1$ posets. That is,
    \[
        \dim\aff\{\FW(\Lflats(Z)) \ : \ Z \text{ zonotope of dimension $d+1$}\}
        \ = \ 2^d - 1 \, .
    \]
\end{thm}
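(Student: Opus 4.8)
The plan is to recast the statement matroid-theoretically and then prove the (equivalent) linear-span assertion by induction on $d$, with the inductive step driven by a non-commutative "gluing" of matroids whose effect on flag-vectors extends the coproduct calculus of~\cite{BER}. First I would record the translation: a lattice $\Lflats$ is $\Lflats(Z)$ for some $(d+1)$-dimensional zonotope $Z$ exactly when it is the geometric lattice of a simple rank-$(d+1)$ matroid representable over $\R$, equivalently the intersection lattice of an essential central real hyperplane arrangement $\Arr$ (the edge directions of $Z$). Since every such $\Lflats$ is a graded poset of rank $d+1$, the Billera--Hetyei theorem gives $\dim\aff\{\FW(\Lflats(Z))\}\le 2^d-1$, so all the content is the reverse inequality. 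It is cleanest to phrase this as a linear-span statement: encode $\FW(\Lflats)$ by the $\mathbf{ab}$-index $\Psi_\Lflats$ in the degree-$d$ part of the free associative algebra $\R\langle\mathbf a,\mathbf b\rangle$; because every $\Psi_\Lflats$ has coefficient $1$ on $\mathbf a^{d}$, the affine span of the flag-vectors has dimension $2^d-1$ precisely when the vectors $\Psi_{\Lflats(Z)}$ linearly span all of $\R\langle\mathbf a,\mathbf b\rangle_d$ (dimension $2^d$).

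I would then install the coalgebra machinery: the deconcatenation coproduct makes $\R\langle\mathbf a,\mathbf b\rangle$ the cofree coalgebra on its degree-$1$ part, and (Billera--Ehrenborg--Readdy, building on Ehrenborg--Readdy) $\poset\mapsto\Psi_\poset$ is a coalgebra morphism, with $\Delta\Psi_\poset=\sum_{\Lbot\prec z\prec\Ltop}\Psi_{[\Lbot,z]}\otimes\Psi_{[z,\Ltop]}$. For a geometric lattice $\Lflats=\Lflats(\Arr)$ and a flat $F$, the intervals $[\Lbot,F]$ and $[F,\Ltop]$ are again geometric and remain representable over $\R$ (they are the localization $\Arr_F$ and the restriction $\Arr^F$), so the span $\mathcal Z_d\subseteq\R\langle\mathbf a,\mathbf b\rangle_d$ of the $\Psi_{\Lflats(Z)}$ assembles into a graded subcoalgebra $\mathcal Z$; the goal is $\mathcal Z_d=\R\langle\mathbf a,\mathbf b\rangle_d$ for all $d$, proved by induction, the cases $d\le 1$ being immediate.

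For the inductive step I would introduce the engine. Plain direct sums of arrangements realize Cartesian products $\Lflats_1\times\Lflats_2$, but this is a \emph{commutative} operation, so it only sees the unordered composition of $d+1$ and, combined with generic zonotopes, is too rigid. The fix is an order-sensitive, representability-preserving construction — the Crapo--Schmitt free product $M_1\square M_2$ of matroids (and its iterates $U_{a_1,n_1}\square\cdots\square U_{a_k,n_k}$ of uniform matroids): it has rank $\operatorname{rk}M_1+\operatorname{rk}M_2$, stays representable over infinite fields, and is not commutative. Its interval structure is controlled — the lower part of $\Lflats(M_1\square M_2)$ is essentially $\Lflats(M_1)$ and its upper part $\Lflats(M_2)$, glued along the coatoms of $\Lflats(M_1)$ — so the core new computation is a formula expressing $\Psi_{\Lflats(M_1\square M_2)}$ through $\Psi_{\Lflats(M_1)}$ and $\Psi_{\Lflats(M_2)}$, obtained by tracking the $\mathbf{ab}$-index across this gluing via the coproduct; this is the promised extension of the BER product/prism calculus. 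Feeding in the inductive hypothesis (that representable geometric lattices of ranks $\le d$ linearly span the corresponding $\mathbf{ab}$-index spaces) together with the generic zonotopes $U_{d+1,n}$ — whose $\mathbf{ab}$-indices are explicit multinomial polynomials in $n$ — one orders the iterated free products by refinement of the underlying composition of $d+1$, passes to leading coefficients in the parameters $n_i$, and reads off a triangular matrix with nonzero diagonal; invertibility gives $\mathcal Z_d=\R\langle\mathbf a,\mathbf b\rangle_d$, hence $\dim\aff\{\FW(\Lflats(Z))\}=2^d-1$ (and, a fortiori, the Billera--Hetyei bound for this restricted class).

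The main obstacle is the free-product $\mathbf{ab}$-index formula and the ensuing triangularity. One must understand the (shifted, Schubert-type) matroid underlying an iterated free product of uniform matroids well enough to control \emph{all} of its flag numbers $W_S$ simultaneously — not merely the Whitney numbers $W_i$, which were enough for Theorem~\ref{thm:whitney} — and to bound the lower-order-in-$n$ error terms, using the coproduct and the inductive hypothesis to reduce these errors to strictly smaller rank. By comparison, the matroid translation, the coalgebra bookkeeping, and checking that free products stay representable over $\R$ are routine.
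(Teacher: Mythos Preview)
Your plan has a real gap at its core: you assert the existence of a formula expressing $\Psi_{\Lflats(M_1\square M_2)}$ through $\Psi_{\Lflats(M_1)}$ and $\Psi_{\Lflats(M_2)}$, but you neither state nor prove it, and your description of the lattice of flats of a free product is wrong. The lattice $\Lflats(M_1\square M_2)$ is \emph{not} obtained by ``gluing $\Lflats(M_1)$ below $\Lflats(M_2)$ along the coatoms of $\Lflats(M_1)$'': already $U_{1,1}\square U_{1,2}=U_{2,3}$, whose lattice of flats has three atoms $\{1\},\{2\},\{3\}$, whereas such a gluing of two $2$-chains would be a $3$-chain. It is true that $S=E(M_1)$ is a flat with $[\Lbot,S]\cong\Lflats(M_1)$ and $[S,\Ltop]\cong\Lflats(M_2)$, but there are many further flats incomparable to $S$, and they carry most of the chain-enumeration. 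In particular it is not at all clear that $\Psi_{\Lflats(M_1\square M_2)}$ is a function of only the $\mathbf{ab}$-indices of the factors (as opposed to the full lattices), so your ``triangularity'' has no matrix to be triangular. You acknowledge this formula as the main obstacle; without it, nothing downstream goes through.

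The paper's proof sidesteps this by using two very special one-element operations instead of a general free product: the prism $\abE(Z)=Z\times[0,1]$ (add a coloop) and the free single-element extension $\abM(Z)=Z+[0,u]$ for generic $u$, together with a projection $\abP$. For these the effect on the $\mathbf{ab}$-index \emph{is} explicit --- $\abE$ acts as $x\mapsto xa+bx+\abR(x)$ for a suitable derivation $\abR$, and $\abM=\abP\circ\abE$ --- and a short induction shows that iterating $\abE$ and $\abM\circ\abE$ on a point already spans $\A_d$. In hindsight $\square\,U_{1,1}$ is exactly $\abE$, so the paper's argument is the minimal fragment of your plan that can actually be carried out: it isolates precisely those free products for which the $\mathbf{ab}$-index transformation is available and proves they suffice.
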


The result is analogous to that of Billera--Ehrenborg--Readdy~\cite{BER}
where it is shown that the flag-vectors of face lattices of zonotopes span the
space of flag-vectors of Eulerian posets. For the proof of
Theorem~\ref{thm:zono_span}, we will employ the coalgebra techniques developed
in~\cite{BER}.

\newcommand{\A}{\mathcal{A}} 
Let $\A = k\langle a,b \rangle$ be the polynomial ring in noncommuting
variables $a$ and $b$. This is a graded algebra $\A = \bigoplus_{d\geq 0}
\A_d$ and a basis for $\A_d$ is given by $\{a-b,b\}^d$. The \Def{ab-Index} of
a graded poset $\poset$ of rank $d+1$ is given by
\[
    \Psi(\poset) \ = \ \sum_{S \subseteq [d]} W_S(\poset) x(S)
\]
where $x(S) = x_1x_2\dots x_d \in \{a-b,b\}^d$ with $x_i = b$ if and only if $i \in S$.

Following~\cite{BER}, we consider two natural operations on zonotopes: If
$Z \subset \R^d$ is a zonotope, then $\abE(Z) \defeq Z \times [0,1] \subset
\R^{d+1}$ is a zonotope of dimension $\dim Z + 1$. This is clearly a
combinatorial construction and for the lattice of flats $\Lflats = \Lflats(Z)$,
we note that
\[
    \abE(\Lflats) \ \defeq \ \Lflats(\abE(Z)) \ = \ \Lflats \times C_1 \, ,
\]
where $C_1 = \{ \Lbot \prec \Ltop \}$ is the chain on $2$ elements.
A vector $u \in \R^d$ is in general position with respect to $Z$ is $u$ is not
parallel to any face of $Z$. It can be shown
(see~\cite[Ch.~7]{White-matroids}) that the lattice of flats of $\abM(Z) \defeq
 Z + [0,u]$ is independent of the choice of $u$ and given by
\[
    \abM(\Lflats) \ \defeq \ \Lflats(\abM(Z)) \ = \ \abE(\Lflats) \setminus
    \{ x \in \abE(\Lflats) : \rk(x) = d+1 \} \, .
\]
Let $\abP(Z)$ be the orthogonal projection of $\abM(Z)$ onto the hyperplane
$u^\perp$.  This again is a combinatorial operation and $\abP(\Lflats) \defeq
\Lflats(\abP(\abM(Z)))$ is obtained from $\Lflats$ by deleting the coatoms, that
is, elements of $\rk(\Lflats) - 1$.

In order to determine the effect on the ab-index, we introduce derivations
$\abR, \abR': \A \to \A$ defined on the variables by $\abR(a) \defeq \abR(b)
\defeq ab$ and $\abR'(a) \defeq R'(b) \defeq ba$ and linearly extended via
\[
    \abR(xy)  \  \defeq \ \abR(x)y + x\abR(y)  \qquad
    \abR'(xy)  \ \defeq \ \abR'(x)y + x\abR'(y)
\]
for monomials $x,y$. Note that both derivations are
homogeneous and map $\A_d$ into $\A_{d+1}$. We also define linear maps
$\abE,\abM,\abP:
\A \to \A$ on monomials $x$ by
\[
    \begin{array}{l@{\ \ \defeq \ \ }l@{\qquad}l@{\ \ \defeq \ \ }l}
        \abP(xa) & x & \abE(x) & xa + bx + \abR(x) \\
        \abP(xb) & 0 & \abM(x) & \abP(\abE(x)) \, . \\
    \end{array}
\]
In particular, we have
\[
    \abM(xa) \ = \ xa + bx + \abR(x) \ = \ \abE(x) \qquad \text{ and } \qquad
    \abM(xb) \ = \
    xb \, .
\]

The following result can be easily obtained by inspecting chains.
\begin{lem}
    Let $Z$ be zonotope and $\Lflats = \Lflats(Z)$ its lattice of flats. Then
    \begin{align*}
        \Psi(\abE(\Lflats))    \ &= \ \abE(\Psi(\Lflats)) \ = \
        \Psi(\Lflats)b + a\Psi(\Lflats) + \abR'(\Psi(\Lflats)),\\
        \Psi(\abP(\Lflats))  \ &= \ \abP(\Psi(\Lflats)), \text{ and}\\
        \Psi(\abM(\Lflats))  \ &= \ \abP(\abE(\Psi(\Lflats))) \, .
    \end{align*}
\end{lem}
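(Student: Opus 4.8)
The plan is to verify the three identities directly by inspecting chains, deriving the third from the first two. Two preliminary remarks. First, the second equality in the first line, $\abE(x) = xa + bx + \abR(x) = xb + ax + \abR'(x)$, is an identity of operators on $\A$: the difference $\abR - \abR'$ is a derivation carrying each of $a$ and $b$ to $ab-ba$, hence coincides with the inner derivation $x \mapsto (a-b)x - x(a-b)$, and substituting this makes $\bigl(xa + bx + \abR(x)\bigr) - \bigl(xb + ax + \abR'(x)\bigr)$ collapse to $0$. Second, the third identity is purely formal once the first two are known: by the description of $\abM(\Lflats)$ recalled above, $\abM(\Lflats)$ is exactly $\abE(\Lflats)$ with its coatoms (the elements of rank $d+1$) removed, so the first identity together with the coatom-deletion identity applied to $\abE(\Lflats)$ gives $\Psi(\abM(\Lflats)) = \abP\bigl(\Psi(\abE(\Lflats))\bigr) = \abP\bigl(\abE(\Psi(\Lflats))\bigr)$, which is the definition of $\abM$ as an operator on $\A$.

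For the prism formula $\Psi(\abE(\Lflats)) = \abE(\Psi(\Lflats))$, it is convenient to write $\Psi(\poset) = \sum_c x(S_c)$, the sum over all chains $c$ in the open interval of $\poset$, where $S_c \subseteq [d]$ is the set of ranks occurring in $c$ and the empty chain contributes $x(\emptyset) = (a-b)^d$. A chain $C$ in the open interval of $\Lflats \times C_1$ splits, according to where its $C_1$-coordinate jumps from $0$ to $1$ (the two elements of $C_1$), into a possibly empty lower part $(x_1,0) \prec \cdots \prec (x_p,0)$ with all $x_i \succ \Lbot$ in $\Lflats$, and a possibly empty upper part $(y_1,1) \prec \cdots \prec (y_q,1)$ with all $y_j \prec \Ltop$ in $\Lflats$, subject to $x_p \preceq y_1$ when both parts are nonempty. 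Refining this according to whether $x_p = \Ltop$, whether $y_1 = \Lbot$, and whether $x_p = y_1$, one sees that $C$ is encoded by a chain of $\Lflats$ — carrying one distinguished, ``doubled'' element precisely when $x_p = y_1$ — whose degree-$(d+1)$ $ab$-word is obtained from the associated degree-$d$ $ab$-word by appending $b$, by prepending $a$, or by replacing one of its letters with the pair $b\,a$. Summing the resulting three families of contributions and reconciling the $(a-b)$-versus-$b$ bookkeeping at the coatom $(\Ltop,0)$, at the atom $(\Lbot,1)$, and for the empty chain, reassembles everything into $\Psi(\Lflats)\,b + a\,\Psi(\Lflats) + \abR'(\Psi(\Lflats))$. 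The point that makes this match the derivation $\abR'$ rather than anything larger is that $\abR'(a-b) = 0$, so that only the $b$-letters of an $x(S)$ are affected — which is exactly the statement that the doubled junction element has a well-defined rank.

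The coatom-deletion identity $\Psi(\abP(\Lflats)) = \abP(\Psi(\Lflats))$ is easier. Removing the rank-$d$ elements of a rank-$(d+1)$ poset $\poset$ destroys precisely the chains whose rank set contains $d$ and leaves every other chain untouched, so $W_S(\abP(\Lflats)) = W_S(\Lflats)$ for all $S \subseteq [d-1]$; on the algebra side, expanding $x(S) = \prod_{i \in S} b \prod_{i \notin S}(a-b)$ and applying $\abP$ (which annihilates monomials ending in $b$ and deletes the trailing $a$ from the others) gives $\abP(x(S)) = 0$ when $d \in S$ and $\abP(x(S)) = x(S)$, now of degree $d-1$, when $d \notin S$; summing term by term yields the claim. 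The main obstacle is the prism count: organizing a chain of $\Lflats \times C_1$ around the $C_1$-jump and matching the ``letter-insertion'' with $\abR'$ is routine but fiddly — it is the $ab$-index analogue of the standard prism computations underlying~\cite{BER} — and the phrase ``easily obtained by inspecting chains'' is accurate only once this decomposition has been set up.
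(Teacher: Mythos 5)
Your argument is correct and is exactly the ``inspection of chains'' that the paper leaves to the reader (the paper gives no proof of this lemma): the coatom-deletion computation, the reduction of the third identity to the first two via $\abM=\abP\circ\abE$, and the decomposition of chains in $\Lflats\times C_1$ at the $C_1$-jump are all the intended steps, and your observation that $\abR-\abR'$ is the inner derivation $x\mapsto(a-b)x-x(a-b)$ correctly reconciles the paper's definition $\abE(x)=xa+bx+\abR(x)$ with the displayed form $\Psi(\Lflats)b+a\Psi(\Lflats)+\abR'(\Psi(\Lflats))$. The prism computation is only sketched, but the decomposition you set up is the right one and checks out on small examples, so there is no gap.
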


\begin{proof}[Proof of Theorem~\ref{thm:zono_span}]
\newcommand{\ZZ}{\mathcal{Z}}%
    For $d \ge 0$ let
    \[
        \ZZ_d \ \defeq \
        \mathrm{span} \{ \Psi(\Lflats(Z)) : Z \text{ zonotope of dimension $d+1$} \} \
        \subseteq \ \A_d \, .
    \]
    We show by induction on $d$ that $\ZZ_d = \A_d$. For $d=1$ this is clearly
    true.  Assume that $\ZZ_d = \A_d$. The key observation is that if $x$ is
    any monomial in $\A_d = \ZZ_d$, then also $\abM(x) \in \ZZ_{d}$ and
    $\abE(x) \in \ZZ_{d+1}$.

    \begin{enumerate}[\rm (i)]
        \item $xba \in \ZZ_{d+1}$ for all $x \in \ZZ_{d-1}$:
        \[
            2 \abE(xb) - \abM(\abE(xb)) \ = \ xba  \, .
        \]

        \item $xab \in \ZZ_{d+1}$ for all $x \in \ZZ_{d-1}$:
    \[
       \abM(\abE(xa)) - \abE(xa + bx + \abR(x)) \ = \ xab \, .
    \]

    \item $xba^n \in \ZZ_{d+1}$ for all $x \in \ZZ_{d - n}, n = 1,
     \dots, d - 1$:\\
    For $n = 1$ this is just (i). We may assume that the claim holds for all
    values $< n+1$ and compute
    \[
      \abE(xba^n)  \ = \ xba^{n+1} + xba^nb + \sum_{i=1}^{n}x_iba^{i}
    \]
    for some $x_i \in A_{d-i-1}$. Since $x_iba^i \in \ZZ_{d+1}$ by induction
    and $xba^nb \in \ZZ_{d+1}$ by (ii), we see that $xba^{n+1} \in \ZZ_{d+1}$.

    \item $xab^n \in \ZZ_{d+1}$ for all $x \in \ZZ_{d - n}, n = 1, \dots, d -
        1$:\\
    For $n = 1$ this is just (ii). Assume the statement holds for all values
    $<n+1$:
    \[
      \abE(xab^n) \ = \ xab^{n+1} + xab^na + \sum_{i=1}^{n}x_iab^{i}
    \]
    for some $x_i \in A_{d-i-1}$. Since $x_iab^i \in \ZZ_{d+1}$ by induction
    and $xab^na \in \ZZ_{d+1}$ by (i), we see that $xab^{n+1} \in \ZZ_{d+1}$.
    \end{enumerate}

    Since every monomial in $\A_{d+1}$ which contains at least one $a$ and $b$
    is of either the form $xab^n$ or $xba^n$, we see that it remains to show
    that $a^{d+1}$ and $b^{d+1}$ are in $\ZZ_{d+1}$ as well. For that we
    compute
    \begin{align*}
      \abE(a^d) &\ = \ a^{d+1} + ba^d + \abR(a^d)\\
      \abE(b^d) &\ = \ b^da + b^{d+1} + \abR(b^d)
    \end{align*}
    Since  $ba^d, b^da, \abR(a^d), \abR(b^d) \in \ZZ_{d+1}$, this finishes the
    proof.
\end{proof}

In fact we have proven the following statement:
\begin{cor}
    For $d \ge 0$, a vector space basis of $\A_d$ is given by
    \[
        \bigl\{ \Phi(\Lflats( \sigma [0,1] )) : \sigma \in
        \{\abE,\abM\circ\abE\}^d \bigr\} \, .
    \]
\end{cor}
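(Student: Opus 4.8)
The plan is to read this statement off the proof of Theorem~\ref{thm:zono_span}. Since $\{\abE,\abM\circ\abE\}^d$ has exactly $2^d=\dim_k\A_d$ elements, it is enough to show that the $ab$-indices $\Psi(\Lflats(\sigma[0,1]))$ with $\sigma\in\{\abE,\abM\circ\abE\}^d$ span $\A_d$: a spanning family of cardinality at most $\dim_k\A_d$ is automatically a basis. (Here $\Psi$ is the $ab$-index, which is what is denoted $\Phi$ in the statement.)

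I would set $\mathcal{Z}'_d\defeq\lin\{\Psi(\Lflats(\sigma[0,1])):\sigma\in\{\abE,\abM\circ\abE\}^d\}\subseteq\A_d$ and prove $\mathcal{Z}'_d=\A_d$ by induction on $d$. The case $d=0$ is $\Psi(\Lflats([0,1]))=1$, a basis of $\A_0$; alternatively one checks $d=1$ directly, where the two zonotopes are the square $[0,1]^2$ and a hexagon, so $\mathcal{Z}'_1=\lin\{a+b,\,a+2b\}=\A_1$. For the inductive step (assuming $\mathcal{Z}'_d=\A_d$), peel off the outermost operation of a length-$(d+1)$ word $\sigma$. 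By the lemma preceding the proof of Theorem~\ref{thm:zono_span}, $\Psi\circ\Lflats$ intertwines the geometric operations $\abE,\abM$ with the linear maps $\abE,\abM$ on $\A$, and hence
\[
    \mathcal{Z}'_{d+1}\ =\ \abE(\mathcal{Z}'_d)+(\abM\circ\abE)(\mathcal{Z}'_d)\ =\ \abE(\A_d)+\abM(\abE(\A_d)),
\]
the last equality using the inductive hypothesis $\mathcal{Z}'_d=\A_d$.

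It remains to show $\abE(\A_d)+\abM(\abE(\A_d))=\A_{d+1}$, and for this I would reuse the computations inside the proof of Theorem~\ref{thm:zono_span}: the identities (i)--(iv) there, together with the treatment of $a^{d+1}$ and $b^{d+1}$, express every monomial of $\A_{d+1}$ as an integer-linear combination of terms $\abE(y)$ and $\abM(\abE(y))$ with $y\in\A_d$ (for instance $xba=2\abE(xb)-\abM(\abE(xb))$ and $xab=\abM(\abE(xa))-\abE(\abE(x))$ with $\abE(x)\in\A_d$). Each such identity therefore already lies in $\abE(\A_d)+\abM(\abE(\A_d))$, which yields $\mathcal{Z}'_{d+1}=\A_{d+1}$ and closes the induction. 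The only point that deserves care — and the one place the argument is not purely formal — is to verify that in that proof $\abM$ (and the derivations $\abR,\abR'$) never appear as stand-alone operations but only inside the combination $\abM\circ\abE$, and that the lower-degree monomials arising in the recursions for $xba^{n+1}$, $xab^{n+1}$ and in the terms $ba^d,b^da,\abR(a^d),\abR(b^d)$ are absorbed into $\abE(\A_d)+\abM(\abE(\A_d))$ by the internal inductions exactly as in that proof. This bookkeeping is routine, but it is the main (mild) obstacle.
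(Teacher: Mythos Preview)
Your proposal is correct and follows essentially the same approach as the paper: the paper's proof simply observes that in the proof of Theorem~\ref{thm:zono_span} only elements of the form $\abE(x)$ and $\abM(\abE(x))$ with $x\in\A_d$ were used to span $\A_{d+1}$, and concludes by induction. You spell out the same argument in more detail, including the cardinality count $|\{\abE,\abM\circ\abE\}^d|=2^d=\dim\A_d$ that turns spanning into a basis, and your check that every appearance of $\abM$ in (i)--(iv) is as $\abM\circ\abE$ (in particular rewriting (ii) as $\abM(\abE(xa))-\abE(\abE(x))$) is exactly the verification the paper leaves implicit.
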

\begin{proof}
  In the proof of Theorem \ref{thm:zono_span} we only needed elements of the
  form $E(x)$ and $M(E(x))$, $x \in \A_d$, to span $\A_{d+1}$, thus the
  assertion follows by induction.
\end{proof}

\section{Spherical intrinsic volumes and Grassmann angles}
\label{sec:spherical_intrinsic_volumes}
We have already seen spherical intrinsic volumes in Section~\ref{sec:incalg},
but as a further introduction and motivation to flag-angles, we will give a
slightly more general account here. Moreover, we will see how interior and
exterior flag-angles are connected. A second goal of this chapter is to show how
we can adapt and generalize many results by Grünbaum~\cite{Grunbaum-grassmann}
to a more general version of Grassmann angles.

For convenience, we define for a cone $C \subset \R^d$ the \Def{completion} as
$\cpl C \defeq C + \affL(C)^\perp$. Recall that $\SCA$ denotes the spherical
volume. For $0 \leq r \leq d$, the \Defn{$r$-th spherical intrinsic volume}
$\SCA^r$ of a cone $C \in \Cones^d$ is defined as:
\[
    \SCA^r(C) \ \defeq \ \sum_{F \subseteq C \text{ face} \atop \dim F = r}
    \SCA(\cpl F) \cdot \SCA( \Ocone{F}{C} ) \, ,
\]
where $\Ocone{F}{C} = \cpl \Ncone{F}{C}  = \Ncone{F}{C} + \affL(F)$.  Note that
$\SCA^d(C) = \SCA(C)$ and $\SCA^i(C) = \SCA^{d-i}(C^\polar)$ for all $i$. The name
\emph{spherical} intrinsic volume stems from the similarity to a formula for the
usual intrinsic volumes $V^r$ of a polytope $P$
\[
   V^r(P) = \sum_{\dim F = r} \vol_r(F) \cdot \SCA(\Ocone{F}{P})\,,
\]
where $\vol_r$ denotes the usual $r$-dimensional volume; see~\cite[Section
4.2]{Schneider}. The intersection of a face $F \subseteq C$ with the unit sphere
$\Sphere^d$ is a spherical polytope and $\SCA(\cpl F)$ is the normalized
spherical volume.

We would like to replace $\SCA$ in the definition of $\SCA^r$ with more general
valuations. In fact, we could replace both occurrences of $\SCA$ with different
valuations. Let $\alpha$, $\beta$ be cone angles. For $0 \leq r \leq d$, we
define the \Defn{generalized spherical intrinsic volume} $\xi^r = \xi^r(\alpha,
\beta) : \Cones^d \to \R$ by
\begin{equation}
    \xi^r(C) \ \defeq \ 
    \sum_{F \subseteq C \text{ face} \atop \dim F = r} \alpha(\cpl F) \cdot
    \beta(\Ocone{F}{C})\,,
    \label{eqn:gen_sph_int_vol}
\end{equation}
It is well known that $\SCA^r$ is a valuation for all $r$;
see~\cite[Lemma~2.3.2]{Schneider_ConvexCones} for an elementary proof. This
remains true for the generalized spherical intrinsic volumes:

\begin{thm}\label{thm:generalized_spherical_intrinsic_volume_is_valuation}
    Let $\alpha, \beta : \Cones^d \to \R$ be cone angles. Then $\xi^r(\alpha,
    \beta)$ is a valuation for $0 \leq r \leq d$.
\end{thm}

Note that  $\xi(\alpha, \beta)$ is not a simple valuation: if $C$ is a linear
subspace of dimension $r < d$, then $\xi^r(\alpha,\beta)(C) =
\alpha(\R^d)\beta(\R^d) = 1$. Furthermore, recall that we can view the
associated interior and exterior cone angles $\aInt$ and $\bExt$ as elements in
the incidence algebra of the face lattice $\Lfaces(C)$. This allows the
interpretation
\[
    \xi^r(C) \ = \ \big(\aInt \ast_r \bExt\big)(\lineal C, C) \, .
\]
Theorem~\ref{thm:generalized_spherical_intrinsic_volume_is_valuation} suggests
that higher products such as $(\aInt_1 \ast_r \aExt_2) \ast_s \aExt_3$ are
valuations as well. This, unfortunately, is not the case as the simplicity of
$\alpha$ and $\beta$ is essential in the proof of
Theorem~\ref{thm:generalized_spherical_intrinsic_volume_is_valuation}.

\begin{proof}[Proof of
Theorem~\ref{thm:generalized_spherical_intrinsic_volume_is_valuation}]
    By~\cite{Sallee} it suffices to show that $\xi = \xi^r(\alpha, \beta)$ is a
    \emph{weak} valuation: For every cone $C \subset \R^d$ and $H$ a linear
    hyperplane we need to show that 
    \begin{equation}\label{eqn:xi_valuation}
        \xi(C)  \ = \ \xi(C \cap H^\le) + \xi(C \cap H^\ge) - \xi(C \cap H)  \,
        .
    \end{equation}
    It is sufficient to assume that $C \not\subseteq H$ and that $H$ meets the
    relative interior of $C$. Then the cones $C^\le \defeq C \cap H^\le$ and
    $C^\ge \defeq C \cap H^\ge$ are of the same dimension as $C$ and $C^=
    \defeq C \cap H$ is of dimension $\dim(C) - 1$.

    To show~\eqref{eqn:xi_valuation}, we need to consider all $r$-faces of
    $C^\leq$, $C^\geq$, and $C^=$. These faces are either faces of $C$ or are
    obtained by intersecting faces of $C$ with $H^\leq$, $H^\geq$ or $H$ in
    the following ways. Let $F$ be an $r$-face of $C$.
    \begin{enumerate}[{Case} 1.]
        \item If $\relint(F) \cap H = \emptyset$, then $F$ is contained in
            $H^\leq$ or $H^\geq$ and $F$ is an $r$-face of $C^\leq$ or
            $C^\geq$, respectively.
        \item If $F \subseteq H$, then $F$ is an $r$-face of $C^\leq$,
            $C^\geq$, and $C^=$.
        \item If $H$ intersects $\relint(F)$ in a proper subset, then $F \cap
            H^\leq$ is an $r$-face of $C^\leq$ and $F \cap H^\geq$ is an
            $r$-face of $C^\geq$. Further more $F \cap H$ is an $(r-1)$-face
            of $C^\leq$, $C^\geq$ and $C^=$.
    \end{enumerate}
    Furthermore, let $G \subseteq C$ be an $(r+1)$-face.
    \begin{enumerate}[{Case} 4.]
        \item If $\relint(G) \cap H \neq \emptyset$, then $G \cap H$ is an
            $r$-face of $C^\leq$, $C^\geq$, and $C^=$.
    \end{enumerate}

    We consider the contributions of each case to \eqref{eqn:xi_valuation}
    separately:

    Case 1. Without loss of generality we can assume that
    $F \subseteq H^\le$ so that $F$ is a face of $C^\le$ as well. In
    this case $\Ocone{F}{C} = \Ocone{F}{C^\le}$ and thus $F$ gives the
    same contribution to $\xi(C)$ and $\xi(C^\le)$ and none to
    $\xi(C^\ge)$ and $\xi(C^=)$.

    Case 2. As $F$ is a face of all four cones, we have
    $\Ocone{F}{C} = \Ocone{F}{C^{\le}} \cup \Ocone{F}{C^{\ge}}$ with
    $\Ocone{F}{C^{\le}} \cap \Ocone{F}{C^{\ge}} = \Ocone{F}{C^=}$. The
    contribution on the right-hand side is then
    \[
        \alpha(\cpl F)( \beta(\Ocone{F}{C^\le}) + 
        \beta(\Ocone{F}{C^\ge}) - 
        \beta(\Ocone{F}{C^=})) \, ,
    \]
    which equals to $\alpha(\cpl F)\beta(\Ocone{F}{C})$ as $\beta$ is a
    valuation.

    Case 3. Set $F^\le = F \cap H^\le$ and $F^\ge = F \cap H^\ge$,
    which are faces of $C^\le$ and $C^\ge$, respectively. Since the
    normal cone is polar to the tangent cone and the tangent cone of a
    face $F$ is determined by any neighborhood of a point
    $q \in \relint(F)$, we have
    $\Ocone{F}{C} = \Ocone{F^\le}{C^\le} = \Ocone{F^\ge}{C^\ge}$. The
    contribution on the right-hand side is therefore
    \[
        (\alpha(\cpl F^\le) + \alpha(\cpl F^\ge)) \beta(\Ocone{F}{C})
    \]
    which is precisely $\alpha(\cpl F) \beta(\Ocone{F}{C})$ since $\alpha$ is a
    simple valuation.

    Case 4. Here $F^= = F \cap H$ is a common face of $C^\le$,
    $C^\ge$, and $C^=$. Since
    $\Ocone{F^=}{C^=} = \Ocone{F^=}{C^{\le}} \cup
    \Ocone{F^=}{C^{\ge}}$ and $\beta$ is a simple valuation, the
    contribution to the right-hand side is $0$.
\end{proof}

Since $\xi^r = \xi^r(\alpha, \beta)$ is a valuation, we immediately
obtain the following from the Brianchon-Gram relation. For a $d$-polytope
$P$ and a face $F \subseteq P$ define
$\widehat{\xi}^r_i(P) \defeq \sum_{F} \xi^r(T_FP)$ where the
sum is over all $i$-faces $F$ of $P$. Applying $\xi^r$ to the general
form of the Brianchon--Gram relation~\eqref{eqn:gen_BR}, we obtain
\begin{cor}
   Let $\alpha, \beta : \Cones^d \to \R$ be cone angles and
   $\xi^r = \xi^r(\alpha, \beta)$ for some $0 \leq r \leq d$. Then
   \[
       \widehat{\xi}^r_0(P) - \widehat{\xi}^r_1(P) + \widehat{\xi}^r_2(P) -
       \dots + (-1)^{\dim d} \cdot \widehat{\xi}^r_d(P)
       \ = \ 
        \begin{cases}1 & \text{ if } r = 0\\0& \text{ if } r > 0\end{cases} \, .
   \]
\end{cor}

If $\beta$ is the (unique) cone angle complementary to $\alpha$, we
will simplify the notation and write
$\xi^r(\alpha) \defeq \xi^r(\alpha,\beta)$. When unraveling the
definition of complementary angles, we obtain an equation sometimes
called the Gauss--Bonnet Theorem for polyhedral cones~\cite{AL}.
Let $\chi$ be the Euler characteristic on cones with $\chi(D) = 0$ if $D$ not
a linear subspace and $\chi(D) = (-1)^{\dim D}$ otherwise. 
\begin{lem}\label{lem:xi_zero_one}
    Let $\alpha$ be a cone angle and $C \in \Cones^d$. Then
    \[
        \sum_{r=0}^{d} (-1)^r \xi^r(\alpha)(C) \ = \ \chi(C) \, .
    \]
\end{lem}

\begin{rem}
    The classical spherical intrinsic volumes furthermore sum to
    $1$. This is not necessarily true for generalized spherical
    intrinsic volumes, but one can show using some calculation
    involving Lemma~\ref{lem:AS} that this holds for $\xi(\alpha)$
    when we additionally assume that the cone angle $\alpha$ is
    \emph{even}, that is, that $\alpha(C) = \alpha(-C)$ for all
    polyhedral cones $C \subseteq \R^d$.
\end{rem}

\newcommand{\Gr}{\operatorname{Gr}}%
\newcommand{\gr}{\kappa}%
We will now express the spherical intrinsic volumes in a different basis.
Using integral geometry, we can relate the usual spherical intrinsic volumes
$\nu^r$ to certain integrals over the \Def{Grassmannian} $\Gr^{r, d}$ of
$r$-dimensional linear subspaces in $\R^d$. The \Def{Haar measure} $\mu$
is the unique $O(d)$-invariant measure  on $\Gr^{r, d}$ such that $\mu(\Gr^{r,
d}) = 1$. The \Defn{$r$-th Grassmann-angle} of a cone $C \subset \R^d$ 
\[
   \gr^r(C) \ \defeq \ \mu(\{L \in \Gr^{r, d} : L \cap C = \{0\}\})
\]
was introduced by Grünbaum in~\cite{Grunbaum-grassmann} as a generalization of
interior and exterior angles. Indeed $2 \SCA(C) = 1 - \gr^1(C)$ and $2
\SCA(C^\polar) = \gr^{d-1}(C)$. To generalize the Grassmann angles to arbitrary
valuations we need to shift our point of view. For a fixed pointed cone $C \in
\Cones^d$, define the $\mu$-measurable function $\eps_C : \Gr^{r,d} \to \{0,1\}$
with $\eps_C(L) \defeq 1$ if $C \cap L = \{0\}$ and $0$ otherwise.
The Grassmann-angle can now be expressed as the integral over $\eps_C$
\[
   \gr^r(C) \ = \ \int_{\Gr^{r, d}} \eps_C(L)\,d\mu(L) 
\]
Recall the kinematic formulas for cones.

\begin{thm}[{\cite[Theorem~5.1]{AL}}]
    Let $C \subseteq \R^d$ be a polyhedral cone. Then for $0 \leq r \leq d$ and $1 \leq k \leq d$:
    \begin{align*}
      \int_{\Gr^{r, d}} \nu^k(C \cap L) d\mu(L) \ &= \ \nu^{k + d - r}(C)\,, &
      \int_{\Gr^{r, d}} \nu^0(C \cap L) d\mu(L) \ &= \ \sum_{j=0}^{d-r} \nu^{j}(C)\,.
    \end{align*}
\end{thm}

For a fixed cone, we have almost surely $\eps_C(L) = \chi(C \cap L)$.  From
Lemma~\ref{lem:xi_zero_one} we get 
\[
    \chi(C) = \sum_{i = 0}^d (-1)^i \nu^i(C) \, ,
\]
and we compute
\begin{align*}
  \kappa^r(C)
  \ &= \ \int_{\Gr^{r,d}} \eps_C(L) d\mu(L) \ = \ \int_{\Gr^{r,d}} \chi(C \cap L) d\mu(L) \\
    &= \  \sum_{i = 0}^d (-1)^i \int_{\Gr^{r,d}} \nu^i(C \cap L) d\mu(L) 
    \ = \ \sum_{j = 0}^{d-r}\nu^j(C) + \sum_{i = 1}^d (-1)^i \nu^{i+d-r}(C) \\
    &= \ \sum_{j = 0}^{d-r}\nu^j(C) + \sum_{i = d-r+1}^d (-1)^{i + d - r} \nu^{i}(C)\,.
\end{align*}

This is a slight variation of the usual Crofton-formulas, which better serves
our purposes. We refer to~\cite{AL} for further details. It is not hard to see
that spherical intrinsic volumes and Grassmann-angles encode the same
quantities in a different basis, and conversely we obtain the intrinsic
volumes from the Grassmann-angles as follows:
\[
   \nu^r = \tfrac{1}{2} \big(\gr^{d-r-1} - \gr^{d-r+1}\big)
\]
for $r = 1, \dots, d-1$ as well as
$\nu^0 = \frac{1}{2} (\gr^d + \gr^{d-1})$ and
$\nu^d = \frac{1}{2} (\gr^0 - \gr^1)$.

Using the generalized spherical volumes allows us to give a generalization of
the Grassmann angles, too, by taking the Crofton-formulas as a definition. Thus
we define for any two cone angles $\alpha, \beta : \Cones^d \to \R$ the
\Defn{generalized $r$-th Grassmann-angle} 
\[
    \gr^r(\alpha, \beta) \defeq \sum_{j = 0}^{d - r} \xi^j(\alpha, \beta) +
    \sum_{i = d-r+1}^{d} (-1)^{i-d+r} \xi^i(\alpha, \beta)\,.
\]
we will simplify write $\xi^r(\alpha) = \xi^r(\alpha, \beta)$ and $\gr^r(\alpha)
= \gr^r(\alpha, \beta)$ if $\beta$ is the (unique) complementary angle to
$\alpha$.

As a corollary of
Theorem~\ref{thm:generalized_spherical_intrinsic_volume_is_valuation},
we have:
\begin{cor}
   Every generalized Grassmann-angle is a valuation.
\end{cor}

From this observation we can draw short proofs for most of the results
in Grünbaum's original paper on Grassmann
angles~\cite{Grunbaum-grassmann} where at the same time we replace the
usual Grassmann-angle with our generalized notion $\gr^r = \gr^r(\alpha)$.
\newcommand{\grInt}{\widehat{\gr}}
Let us
 denote by $\grInt_i^r(P)$ the sums of all $r$-th generalized Grassmann
angles of the $i$-faces of a $d$-polytope $P \subseteq \R^d$, that is
\[
    \grInt_i^r(P) \ \defeq \ \sum_{F} \gr^r(T_FP)\,.
\]
We have:


\begin{cor}[Generalization of Grünbaum~{\cite[Theorem~3.3]{Grunbaum-grassmann}}]
   Let $P \subset \R^d$ be a $d$-polytope and
   $\gr^r = \gr^r(\alpha)$ for a cone angle
   $\alpha$ and $0 \leq r \leq d$. Then
   \[
       \sum_{i = 0}^{d - r} (-1)^{i} \cdot \grInt^r_i(P) \ = \ 1\,.
   \]
\end{cor}
\begin{proof}
   Since $\xi^0(\{0\}) = 1$ and
   $\xi^r(\{0\}) = 0$ for all $1 \leq r \leq d$, we have
   $\gr^r(\{0\}) = 1$ for all $0 \leq r \leq d$. Applying $\gr^r$
   to both sides of~\eqref{eqn:gen_BR} yields
   \begin{equation}
       \gr^r(\{0\}) + \sum_{F} (-1)^{\dim F + 1} \gr^r(T_FP) + (-1)^{d+1} \gr^r(\R^d) = 0\,.
       \label{eqn:grassmann_BR}
   \end{equation}
   If $F \subseteq P$ is a face with $\dim F > d-r$, then
   $\xi^j(T_FP) = 0$ for all $j \leq d-r$, as the smallest face of
   $T_FP$ has dimension $\dim F$ and thus the sum in
   \eqref{eqn:gen_sph_int_vol} is empty. Thus, by Lemma~\ref{lem:xi_zero_one}
   \begin{align*}
     \gr^r(T_FP)
     \ = \ \sum_{j = 0}^{d-r} \xi^j(T_FP) + \sum_{i = d-r+1}^d (-1)^{i-d+r} \xi^i(T_FP)
     \ = \ (-1)^{d-r} \sum_{i = 0}^{d} (-1)^{i} \xi^i(T_FP) \ = \ 0\,.
   \end{align*}
   With that, we obtain the claim by rearranging \eqref{eqn:grassmann_BR}.
\end{proof}

In a similar fashion, most of the results in
\cite{Grunbaum-grassmann} can be shown for
generalized Grassmann angles. For example,
\cite[Theorem~3.5]{Grunbaum-grassmann} follows
from an application $\kappa^r$ to Lemma~\ref{lem:AS}.

\bibliographystyle{siam} \bibliography{bibliography.bib}

\end{document}